\DeclareSymbolFont{AMSb}{U}{msb}{m}{n}
           \newcommand{\mylabel}[2]{#2\def\@currentlabel{#2}\label{#1}}
      \numberwithin{equation}{section}
\definecolor{antiquewhite}{rgb}{0.98, 0.92, 0.84}
\definecolor{buff}{rgb}{0.94, 0.86, 0.51}
\definecolor{palecopper}{rgb}{0.85, 0.54, 0.4}
\definecolor{fluorescentyellow}{rgb}{0.8, 1.0, 0.0}
\definecolor{britishracinggreen}{rgb}{0.0, 0.26, 0.15}
\definecolor{cobalt}{rgb}{0.0, 0.28, 0.67}
\DeclareSymbolFont{usualmathcal}{OMS}{cmsy}{m}{n}
\DeclareSymbolFontAlphabet{\mathcal}{usualmathcal}
\newcommand{\BC}{{\mathbb{C}}}
\newcommand{\BG}{{\mathbb{G}}}
\newcommand{\BL}{{\mathbb{L}}}
\newcommand{\BZ}{{\mathbb{Z}}}
\newcommand{\simto}{\,\widetilde{\to}\,}
\newcommand{\qcoh}{\mathsf{qc}}
\newcommand{\pt}{{\mathsf{pt}}}
\newcommand{\ch}{{\mathrm{ch}}}
\DeclareMathOperator{\Aut}{Aut}
\DeclareMathOperator{\Hilb}{Hilb}
\DeclareMathOperator{\Quot}{Quot}
\DeclareMathOperator{\Mod}{Mod}
\DeclareMathOperator{\Coh}{Coh}
\DeclareMathOperator{\QCoh}{QCoh}
\DeclareMathOperator{\At}{At}
\DeclareMathOperator{\Perf}{Perf}
\DeclareMathOperator{\Rep}{Rep}
\DeclareMathOperator{\op}{op}
\DeclareMathOperator{\tr}{tr}
\newcommand{\derived}{\mathbf{D}}
\newcommand{\dd}{\mathrm{d}}
\newcommand*{\defeq}{\mathrel{\vcenter{\baselineskip0.5ex \lineskiplimit0pt
                     \hbox{\scriptsize.}\hbox{\scriptsize.}}}%
                     =}
\newcommand{\into}{\hookrightarrow}
\newcommand{\onto}{\twoheadrightarrow}
\DeclareFontFamily{OT1}{rsfs}{}
\DeclareFontShape{OT1}{rsfs}{n}{it}{<-> rsfs10}{}
\DeclareMathAlphabet{\curly}{OT1}{rsfs}{n}{it}
\newcommand\Ext{\operatorname{Ext}}
\newcommand\Hom{\operatorname{Hom}}
\newcommand\LL{\mathbf{L}}
\newcommand\RR{\mathbf{R}}
\DeclareMathOperator{\lHom}{{\mathscr Hom}}
\DeclareMathOperator{\RRlHom}{{\mathbf{R}\mathscr Hom}}
\DeclareMathOperator{\RHom}{{\mathbf{R}\mathrm{Hom}}}
\newcommand\Spec{\operatorname{Spec}}
\newcommand\id{\operatorname{id}}
\newcommand{\Pic}{\mathop{\rm Pic}\nolimits}
\newcommand{\HH}{\mathrm{H}}
\newcommand{\OO}{\mathscr O}
\newcommand*{\isoarrow}[1]{\arrow[#1,"\rotatebox{90}{\(\sim\)}"
]}
\tikzset{commutative diagrams/arrow style=math font}
\tikzset{commutative diagrams/.cd,
mysymbol/.style={start anchor=center,end anchor=center,draw=none}}
\newcommand\MySymb[2][\square]{%
  \arrow[mysymbol]{#2}[description]{#1}}
\tikzset{
shift up/.style={
to path={([yshift=#1]\tikztostart.east) -- ([yshift=#1]\tikztotarget.west) \tikztonodes}
}
}
\theoremstyle{definition}
\newtheorem*{lemma*}{Lemma}
\newtheorem*{theorem*}{Theorem}
\newtheorem*{example*}{Example}
\newtheorem*{fact*}{Fact}
\newtheorem*{notation*}{Notation}
\newtheorem*{definition*}{Definition}
\newtheorem*{prop*}{Proposition}
\newtheorem*{remark*}{Remark}
\newtheorem*{corollary*}{Corollary}
\newtheorem*{conventions*}{Conventions}
\newtheorem{definition}{Definition}[section]
\newtheorem{example}[definition]{Example}
\newtheorem{fact}[definition]{Fact}
\newtheorem{notation}[definition]{Notation}
\newtheorem{remark}[definition]{Remark}
\newtheoremstyle{thm} 
        {3mm}
        {3mm}
        {\slshape}
        {0mm}
        {\bfseries}
        {.}
        {1mm}
        {}
\theoremstyle{thm}
\newtheorem{theorem}[definition]{Theorem}
\newtheorem{corollary}[definition]{Corollary}
\newtheorem{lemma}[definition]{Lemma}
\newtheorem{prop}[definition]{Proposition}
\newtheorem{thm}{Theorem}
\newtheoremstyle{ex} 
        {3mm}
        {3mm}
        {}
        {0mm}
        {\scshape}
        {.}
        {1mm}
        {}
\theoremstyle{ex}
\newtheoremstyle{sol} 
        {3mm}
        {3mm}
        {}
        {0mm}
        {\scshape}
        {.}
        {1mm}
        {}
\theoremstyle{sol}
\title[The equivariant Atiyah class]{The equivariant Atiyah class}
\author[Andrea T. Ricolfi]{Andrea T. Ricolfi} 
\begin{document}
\maketitle

\begin{abstract}
Let $X$ be a complex scheme acted on by an affine algebraic group $G$. We prove that the Atiyah class 
of a $G$-equivariant perfect complex on $X$, as constructed by Huybrechts and Thomas, is $G$-equivariant in a precise sense. As an application, we show that, if $G$ is reductive, the obstruction theory on the fine relative moduli space $M\to B$ of simple  perfect complexes on a $G$-invariant smooth projective family $Y\to B$ is $G$-equivariant. The results contained here are meant to suggest how to check the equivariance of the natural obstruction theories on a wide variety of moduli spaces equipped with a torus action, arising for instance in Donaldson--Thomas theory and Vafa--Witten theory.
\end{abstract}

{\hypersetup{linkcolor=black}
\tableofcontents}

\section{Introduction}

\subsection*{Overview}
The \emph{Atiyah class} of a vector bundle $V$ on a complex algebraic variety $X$, introduced in \cite{MR86359}, is an extension class
\[
\At_V \,\in\, \Ext^1_X(V,V\otimes \Omega_X)
\]
whose vanishing is equivalent to the existence of an algebraic connection on $V$. 
A general definition of Atiyah class was given by Illusie for every complex of sheaves on a scheme~\cite{Ill}.

In this paper all schemes, stacks and group schemes are defined over $\BC$.
Let $G$ be an algebraic group acting on a scheme $X$. Our first goal is to make sense of, and prove (see Theorem \ref{main_thm}), a rigorous version of the following slogan:
\[
\emph{The Atiyah class of a }G\emph{-equivariant perfect complex on }X\emph{ is }G\emph{-equivariant.}
\]
Our main motivation comes from enumerative geometry: the Atiyah class is a crucial ingredient in the construction of the \emph{obstruction theory} \cite{BFinc,LiTian} on various moduli spaces of sheaves, such as those appearing as main characters in Donaldson--Thomas theory, Pandharipande--Thomas theory, Vafa--Witten theory.
When the moduli space is acted on by a torus $\BG_m^r$, a powerful tool to compute the virtual invariants defined via these obstruction theories is the \emph{virtual localisation formula}, proved in equivariant Chow cohomology by Graber and Pandharipande \cite{GPvirtual} and in K-theory by Fantechi and G\"{o}ttsche \cite{Fantechi_Gottsche}. The localisation theorem requires as input an \emph{equivariant} obstruction theory.
After confirming an equivariant version of Grothendieck duality (\S\,\ref{sec:verdier}), we show in a general example that the equivariance of the Atiyah class ensures that the obstruction theory it induces is itself equivariant, at least for reductive groups; since $\BG_m^r$ is reductive, this is enough to apply the virtual localisation formula.

By the above slogan, in a typical situation a `working mathematician' would only have to verify the equivariance of the universal sheaf\footnote{Most sheaf-theoretic moduli problems, including those for which one does not have a universal sheaf (but only a universal \emph{twisted} sheaf), should behave in a way that is entirely parallel to our discussion in \S\,\ref{sec:perfect_complexes}.} (or complex) on the moduli space under consideration in order to apply the localisation theorem. This procedure is explained in detail in \S\,\ref{sec:perfect_complexes}, where we prove the equivariance of the universal object on the moduli space of simple perfect complexes on a $G$-invariant smooth projective family (Proposition \ref{prop:At_E_is_equiv}). 
A special case of our construction will be considered in \cite{FMR} in the context of Quot schemes over $3$-folds, in order to produce equivariant virtual fundamental classes and prove a special case of a formula conjectured in \cite{Quot19}. Quot schemes have already appeared in many equivariant and non-equivariant calculations in enumerative geometry \cite{BR18,DavisonR,Gholampour2017a,Oprea:2019aa,Ricolfi2018,LocalDT}, and this work is meant to make the foundations of their virtual equivariant theory rigorous, as well as that of other moduli spaces of sheaves. 

\subsection*{Main result}
Let $X$ be a separated noetherian scheme over $\BC$, and let $\QCoh_X$ be the abelian category of quasi-coherent $\OO_X$-modules.
Let $E \in \Perf X \subset \derived(\QCoh_X)$ be a perfect complex. Assuming $X$ admits a closed embedding in a smooth scheme,
Huybrechts and Thomas defined the \emph{truncated Atiyah class} of $E$ as an element
\begin{equation}\label{main_atiyah}
\At_E \,\in\, \Ext^1_X(E,E\otimes \BL_X),
\end{equation}
where $\BL_X \in \derived^{[-1,0]}(\QCoh_X)$ is the truncated cotangent complex.
If $X$ carries an action of a complex algebraic group $G$, one can form the abelian category $\QCoh_X^G$ of $G$-equivariant quasi-coherent sheaves on $X$. There is an exact functor $\Phi\colon \derived(\QCoh_X^G)\to \derived(\QCoh_X)$ forgetting the equivariant structure.
We say that $\At_E$ is $G$-\emph{equivariant} if the corresponding morphism $E \to E\otimes\BL_X[1]$ in $\derived(\QCoh_X)$ can be lifted to $\derived(\QCoh_X^G)$ along $\Phi$.

\smallbreak
Our first main result is the following.

\begin{thm}[Theorem \ref{main_thm_BODY}]\label{main_thm}
Let $G$ be a complex affine algebraic group acting on a separated noetherian $\BC$-scheme $X$ admitting a $G$-equivariant embedding in a smooth $G$-scheme. 
Fix a perfect complex $E \in \Perf X$. Then every lift of $E$ to $\derived(\QCoh_X^G)$ makes $\At_E$ canonically $G$-equivariant. 
\end{thm}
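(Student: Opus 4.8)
The plan is to revisit the Huybrechts--Thomas construction of $\At_E$ and carry it out essentially verbatim inside the equivariant category, then observe that the forgetful functor $\Phi$ turns the equivariant construction back into the original one. It is convenient to encode ``equivariant'' geometrically: one has an equivalence $\QCoh_X^G\simeq\QCoh_{[X/G]}$ under which $\Phi$ becomes the pullback $\pi^*$ along the smooth atlas $\pi\colon X\to[X/G]$, and a $G$-equivariant lift of $E$ is precisely a perfect complex $\widetilde E$ on $[X/G]$ with $\pi^*\widetilde E\cong E$, which we fix once and for all.

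First I would promote the geometric input. A $G$-equivariant closed embedding $X\into M$ into a smooth $G$-scheme --- which exists by hypothesis --- yields a closed immersion of stacks $[X/G]\into[M/G]$ with $[M/G]$ smooth, and every sheaf occurring in the Huybrechts--Thomas recipe (the conormal sheaf $I/I^2$, the restriction $\Omega_M|_X$, the structure sheaf of the relevant first-order neighbourhood, and hence the truncated cotangent complex $\BL_X=[I/I^2\to\Omega_M|_X]$ together with its place in the conormal triangle) is built by functorial operations out of this $G$-equivariant datum, and therefore carries a canonical lift to $\QCoh_X^G$ (equivalently, to $\QCoh_{[X/G]}$) restricting to the usual object under $\Phi$. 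Running the recipe with these equivariant inputs, together with the equivariant structure morphisms (inclusion into the first-order neighbourhood, the two projections to $X$, the conormal short exact sequence) and the operations it uses --- tensoring with $\widetilde E$, pullback along equivariant morphisms, and a finite pushforward to $X$ --- produces a distinguished triangle in $\derived(\QCoh_X^G)$ whose connecting morphism is a class $\At_E^G\in\Ext^1_{\QCoh_X^G}(\widetilde E,\widetilde E\otimes\BL_X)$, i.e.\ a morphism $\widetilde E\to\widetilde E\otimes\BL_X[1]$ in $\derived(\QCoh_X^G)$.

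It then remains to identify $\Phi(\At_E^G)$ with $\At_E$ and to pin down canonicity. The functor $\Phi$ (equivalently $\pi^*$ for the smooth, hence flat, atlas $\pi$) is exact, symmetric monoidal, and commutes with pullback along $G$-equivariant morphisms and with pushforward along finite --- hence affine and exact --- $G$-equivariant morphisms; as these are the only operations entering the construction, $\Phi$ sends the triangle above to the Huybrechts--Thomas triangle computing $\At_E$, whence $\Phi(\At_E^G)=\At_E$. This is exactly a lift of $\At_E$ along $\Phi$, proving equivariance. To see that the equivariant structure so obtained depends only on $\widetilde E$, one reruns the Huybrechts--Thomas comparison argument for two $G$-equivariant embeddings by embedding $X$ diagonally into their product; the comparison isomorphisms are again functorial in the equivariant data, so $\At_E^G$ is independent of the embedding and the induced equivariant structure on $\At_E$ is canonical.

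The step I expect to be the main obstacle is the bookkeeping of the second paragraph: one must make the Huybrechts--Thomas construction --- especially the passage from the naive Atiyah class valued in $\widetilde E\otimes\Omega_X[1]$ (or $\widetilde E\otimes\Omega_M|_X[1]$) to the truncated one valued in $\widetilde E\otimes\BL_X[1]$ --- manifestly functorial enough that the whole distinguished triangle, and not merely its objects, lifts to $\derived(\QCoh_X^G)$ and is compatible with $\Phi$; equivalently, if one works on $[X/G]$ directly, one must insert and track the natural comparison morphism between the intrinsic cotangent complex $\BL_{[X/G]}$ and the descent of $\BL_X$. Once the construction is organised so as to be patently natural, these compatibilities follow formally from functoriality of the cotangent complex and from the exactness and monoidality of $\Phi$.
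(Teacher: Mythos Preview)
Your approach is essentially the paper's: lift every ingredient of the Huybrechts--Thomas construction to $\derived(\QCoh_X^G)$ and then observe that the forgetful functor $\Phi$ recovers the original. The paper, however, organises the argument in a way that dissolves precisely the ``bookkeeping'' you flag as the main obstacle. Rather than tracking a triangle whose connecting morphism is $\At_E$, the paper first isolates the \emph{universal} truncated Atiyah class $\alpha_X\colon \OO_{\Delta_X}\to i_{\Delta_X\ast}\BL_X[1]$ on $X\times X$ and observes that it is represented by an explicit morphism of two-term complexes built entirely out of the $G$-equivariant ideal sheaves $J\subset\OO_A$ and $\mathscr I_A\subset\OO_{A\times A}$ via restriction and quotients; its $G$-equivariance is therefore manifest, with no passage through a ``naive'' class needed. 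Then $\At_E=\RR\pi_{2\ast}(\pi_1^\ast E\otimes\alpha_X)$ is obtained by applying equivariant derived functors, which commute with $\Phi$.

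Two small corrections to your sketch: the pushforward in the Fourier--Mukai step is $\RR\pi_{2\ast}$ along the projection $\pi_2\colon X\times X\to X$, which is proper but not finite, so one genuinely needs the equivariant \emph{derived} pushforward (this is where condition $(\dagger)$ and the machinery of \S\,\ref{sec:equiv_der_functors} enter); and in the Huybrechts--Thomas setup $\At_E$ is not a connecting morphism of a triangle on $X$ but the Fourier--Mukai transform of the explicit chain map $\alpha_X$. Framing it this way makes the equivariance check a one-liner rather than a bookkeeping exercise.
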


Note that the assumption on $X$ is satisfied as soon as, for instance, $X$ is quasi-projective and carries at least one $G$-equivariant line bundle. 

\subsection*{Application to equivariant obstruction theories}

As we briefly recall below, the Atiyah class is the main ingredient in the construction of an \emph{obstruction theory} on moduli spaces of simple perfect complexes 
on a smooth projective family $Y\to B$, see \cite[Thm.~4.1]{HT}.
An obstruction theory \cite[Def.~4.4]{BFinc} on a scheme $X$ is a morphism $\phi\colon \mathbb E \to \BL_X$ 
in $\derived(\QCoh_X)$ such that $\mathcal H^0(\phi)$ is an isomorphism and $\mathcal H^{-1}(\phi)$ is a surjection. See \cite[\S\,7]{BFinc} for a relative version.

In case $X$ is acted on by an algebraic group $G$, the complex $\BL_X$ has a canonical lift to $\derived(\QCoh_X^G)$ (see \S\,\ref{sec:truncated_cotg}), and one has the following notion.

\begin{definition}[{\cite{GPvirtual,BFHilb}}]\label{def:equiv_OB_THEORY}
Let $G$ be an algebraic group acting on $X$. An obstruction theory $\phi\colon \mathbb E \to \BL_X$ is $G$-\emph{equivariant} if $\phi$ can be lifted to a morphism in $\derived(\QCoh_X^G)$.
\end{definition}

To make a statement about equivariant obstruction theories, one needs to get a handle on Hom-sets in $\derived(\QCoh_X^G)$. For this, we restrict to $G$ reductive (in order to exploit a technical result, Lemma \ref{lemma:G-inv_Ext_groups}). For instance, the theory works for a torus $G = \BG_m^r$, which includes most applications we have in mind.

\smallbreak
Here is the statement of our second main result.

\begin{thm}[Theorem \ref{main2_body}]\label{thm:main2}
Let $G$ be an affine reductive algebraic group. Let $Y\to B$ be a $G$-invariant smooth projective family of varieties. Let $M\to B$ be a fine moduli space of simple perfect complexes on the fibres of $Y\to B$. Then the relative obstruction theory on $M\to B$ is $G$-equivariant.
\end{thm}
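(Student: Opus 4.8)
The plan is to reduce the statement to Theorem~\ref{main_thm} applied to the universal object on $M$, combined with the Grothendieck-duality bookkeeping from \S\ref{sec:verdier} and the reductive-group input of Lemma~\ref{lemma:G-inv_Ext_groups}. First I would recall the (non-equivariant) construction of the relative obstruction theory of \cite[Thm.~4.1]{HT}: on $M\times_B Y$ one has the universal perfect complex $\mathbb E$, its Atiyah class $\At_{\mathbb E}\in\Ext^1(\mathbb E,\mathbb E\otimes\BL_{M\times_B Y/Y})$, and the obstruction theory on $M\to B$ is obtained by composing the Künneth/Atiyah map with the trace and applying relative Grothendieck--Verdier duality along the projection $p\colon M\times_B Y\to M$, so that $\phi\colon \mathbb E^{\mathrm{obstr}} = \RR p_*\RRlHom(\mathbb E,\mathbb E\otimes\omega_{Y/B}[\dim Y/B])^{\vee}\to \BL_{M/B}$, say. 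The content of the theorem is that every arrow in this chain lifts to $\derived(\QCoh^G)$ once the inputs do.

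Next I would establish the equivariance of the inputs. The family $Y\to B$ is $G$-invariant by hypothesis, hence $M\times_B Y\to M$ is a $G$-equivariant morphism of $G$-schemes, and $\BL_{M/B}$ (resp.\ $\BL_{M\times_B Y/Y}$) has its canonical equivariant lift by \S\ref{sec:truncated_cotg}; the relative dualising complex $\omega_{Y/B}[\dim Y/B]$ is equivariant because $Y\to B$ is a $G$-invariant smooth projective family. The crucial input is that the universal complex $\mathbb E$ itself lifts to $\derived(\QCoh_{M\times_B Y}^G)$: this is exactly Proposition~\ref{prop:At_E_is_equiv} (and the discussion of \S\ref{sec:perfect_complexes}), which produces the $G$-equivariant structure on the universal object on the fine moduli space of simple perfect complexes. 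Given that lift, Theorem~\ref{main_thm} provides a canonical $G$-equivariant structure on $\At_{\mathbb E}$, i.e.\ the morphism $\mathbb E\to\mathbb E\otimes\BL_{M\times_B Y/Y}[1]$ lifts along $\Phi$.

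Then I would push this through the remaining operations, checking at each step that the functor $\Phi\colon\derived(\QCoh^G)\to\derived(\QCoh)$ commutes with it: the trace map $\mathbb E\otimes\mathbb E^{\vee}\to\OO$ is canonical and hence equivariant; $\RR p_*$ and $\otimes\,\omega_{Y/B}$ have obvious equivariant refinements for the equivariant pushforward along the $G$-equivariant proper map $p$; and the duality isomorphism used to identify $\RR p_*\RRlHom(-,\omega_{Y/B}[\dim])$ with the dual of $\RR p_*(-)$ is equivariant by the equivariant Grothendieck duality established in \S\ref{sec:verdier}. Concatenating, one gets a morphism in $\derived(\QCoh_M^G)$ whose image under $\Phi$ is the Huybrechts--Thomas obstruction-theory morphism $\phi$; that is precisely the assertion that $\phi$ is $G$-equivariant in the sense of Definition~\ref{def:equiv_OB_THEORY}.

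The main obstacle is not the formal diagram-chasing but the passage through $\RR p_*$ and duality at the level of \emph{equivariant} derived categories: one must know that $\derived(\QCoh_M^G)\to\derived(\QCoh_M)$ detects (and the construction produces) genuine lifts of morphisms, not merely compatible collections of morphisms over an atlas. This is where reductivity of $G$ enters: Lemma~\ref{lemma:G-inv_Ext_groups} identifies $\Hom_{\derived(\QCoh_M^G)}$ with the $G$-invariants of $\Hom_{\derived(\QCoh_M)}$, so a $G$-invariant morphism in the ordinary derived category lifts uniquely to the equivariant one. Thus the only substantive thing to verify is that the obstruction-theory morphism $\phi$, built from $\At_{\mathbb E}$ (which is equivariant, hence $G$-invariant as a map), trace, $\RR p_*$ and equivariant duality, is itself $G$-invariant — and that follows because each constituent is. I would spell this out carefully, since the interplay between $\RR p_*$, the relative dualising complex, and the equivariant structure is exactly the point where a naive argument could silently drop the invariance.
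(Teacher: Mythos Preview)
Your proposal is correct and follows essentially the same route as the paper: equivariance of the universal complex (Proposition~\ref{prop:At_E_is_equiv}), equivariance of its Atiyah class via Theorem~\ref{main_thm}, compatibility of the derived functors and of Grothendieck duality with $\Phi$ (\S\ref{sec:equiv_der_functors}--\ref{sec:verdier}), and finally reductivity through Lemma~\ref{lemma:G-inv_Ext_groups} to promote the resulting $G$-invariant morphism to a morphism in $\derived(\QCoh_M^G)$. The one place where the paper is more careful than your sketch is the equivariance of the trace and of the splitting $\RRlHom(E,E)\cong \OO\oplus \RRlHom(E,E)_0$ (needed so that the projection onto the trace-free part is a map of $G$-representations): ``canonical hence equivariant'' is the right intuition, but the paper supplies an explicit argument in \S\ref{sec:thmB_proof}.
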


We refer to \cite[\S\,4.1]{HT} (or our \S\,\ref{sec:moduli_M}) for the precise assumptions on $M\to B$.
We briefly outline here the role of the Atiyah class and of Grothendieck duality in the construction of the relative obstruction theory on $M\to B$. Set $X = Y\times_BM$, let $E \in \Perf X$ be the universal perfect complex and let $\pi_M\colon X \to M$ be the projection. The Atiyah class one has to consider is the `$M$-component' of \eqref{main_atiyah}, namely
\[
\At_{E/Y} \in \Ext^1_X(E,E\otimes \pi_M^\ast \BL_{M/B}).
\]
Via the distinguished triangle
\[
\RRlHom_X(E,E)_0 \to \RRlHom_X(E,E) \xrightarrow{\mathrm{trace}} \OO_X,
\]
the class $\At_{E/Y}$ projects onto an element of
\begin{equation}\label{vd}
\Ext^1_X(\RRlHom_X(E,E)_0,\pi_M^\ast \BL_{M/B})  \,\cong\, 
\Ext^{1-d}_M(\RR \pi_{M\ast}(\RRlHom_X(E,E)_0 \otimes \omega_{\pi_M}),\BL_{M/B}),
\end{equation}
where $d$ is the relative dimension of $Y\to B$ and the isomorphism is given by Grothendieck duality. The image of $\At_{E/Y}$ along this journey is a morphism
\[
\phi\colon \RR \pi_{M\ast}(\RRlHom_X(E,E)_0 \otimes \omega_{\pi_M})[d-1] \to \BL_{M/B},
\]
and it is shown in \cite[Thm.~4.1]{HT} that $\phi$ is a relative obstruction theory in the sense of \cite[\S\,7]{BFinc}. The following strategy will prove Theorem \ref{thm:main2}: 
\begin{enumerate}
\item  $X$ has a $G$-action such that $E$ is $G$-equivariant (Proposition \ref{prop:At_E_is_equiv}).
\item  $\At_{E/Y}$ is a $G$-invariant extension (thanks to Theorem \ref{main_thm}). \label{item2}
\item The Grothendieck duality isomorphism \eqref{vd} is $G$-equivariant (Corollary \ref{cor:Equivariant_Grothendieck_Global_sections}).\label{item3}
\item $G$-invariant extensions correspond to morphisms in $\derived(\QCoh_M^G)$.\label{item4}
\end{enumerate}

Step \eqref{item4} uses the reductivity of $G$ via Lemma \ref{lemma:G-inv_Ext_groups}.

\begin{conventions*}
All schemes are noetherian and defined over $\BC$. By an \emph{algebraic group} $G$ we mean a connected 
group scheme of finite type over $\BC$ (often affine). We follow Olsson \cite[Ch.~8]{Olsson_book} for conventions on algebraic stacks (in particular, we make no separation assumptions).
For an algebraic stack $\mathscr X$, we denote by $\QCoh_{\mathscr X}$ the abelian category of quasi-coherent 
sheaves on the lisse-\'etale site of $\mathscr X$ \cite[Ch.~9]{Olsson_book}, and $\derived(\mathscr X)$ will denote the unbounded derived category of the abelian category $\Mod_{\OO_{\mathscr X}}$ of all $\OO_{\mathscr X}$-modules. 
\end{conventions*}

\subsection*{Acknowledgements}
We thank Pieter Belmans, Martijn Kool, Amnon Neeman and Richard Thomas for very helpful discussions. Special thanks to David Rydh for suggesting several improvements. We also thank SISSA for the excellent working conditions.

\section{Equivariant sheaves and complexes}

\subsection{The category of equivariant sheaves}\label{sec:equiv_sheaves_preliminaries}

Let $X$ be a noetherian scheme over $\BC$,\footnote{The theory works relatively to a fixed base scheme $B$ (see Remark \ref{rmk:relative}). This requires all relative operations (such as fibre products) to be performed over $B$, as well as the requirement that $G \to B$ be flat (this would be needed e.g.~in the construction of $f_\ast$ and $\lHom_X$ in \S\,\ref{sec:underived_functors}).} equipped with an action $\sigma\colon G\times X \to X$ of a group scheme $G$. We call such a pair $(X,\sigma)$ a $G$-\emph{scheme}. The abelian category $\Mod_{\OO_X}$ of $\OO_X$-modules contains the abelian subcategories $\QCoh_X$ (resp.~$\Coh_X$) of quasi-coherent (resp.~coherent) $\OO_X$-modules. We will mostly focus on $\QCoh_X$ in this paper.

Denoting by $\mathsf m\colon G\times G\to G$ the group law of $G$, there is a commutative diagram
\begin{equation}\label{diag:group_law}
\begin{tikzcd}[column sep = large]
G\times G\times X\arrow[swap]{d}{\id_G\times\sigma}\arrow{r}{\mathsf m\times \id_X} &
G\times X\arrow{d}{\sigma} \\
G\times X \arrow{r}{\sigma} & 
X
\end{tikzcd}
\end{equation}
translating the condition $g\cdot (h\cdot x) = (gh)\cdot x$.

Let $p_i\colon G\times X \to X$ and $p_{ij}\colon G\times G\times X \to G\times X$ denote the projections onto the labeled factors.

\begin{definition}\label{def:equivariant_sheaves}
A $G$-\emph{equivariant quasi-coherent sheaf} on $X$ is a pair $(\mathscr F,\vartheta)$ where $\mathscr F \in \QCoh_X$ and $\vartheta\colon p_2^\ast \mathscr F \simto \sigma^\ast\mathscr F$ is an isomorphism in $\QCoh_{G\times X}$ compatible with the diagram \eqref{diag:group_law}. In other words, $\vartheta$ is required to satisfy the cocycle condition 
\begin{equation}\label{eqn:cocycle_condition}
(\mathsf m\times \id_X)^\ast \vartheta = (\id_G\times\sigma)^\ast\vartheta \circ p_{23}^\ast \vartheta.
\end{equation}
The isomorphism $\vartheta$ is called a $G$-\emph{equivariant structure} on $\mathscr F$.
\end{definition}

The same definition can be given for objects $\mathscr F \in  \Mod_{\OO_X}$ as well as $\mathscr F \in \Coh_X$.

\smallbreak
Explicitly, the cocycle condition \eqref{eqn:cocycle_condition} means that the diagram of isomorphisms
\[
\begin{tikzcd}[column sep = large]
(\mathsf m\times \id_X)^\ast p_2^\ast \mathscr F \arrow{rr}{(\mathsf m\times \id_X)^\ast \vartheta}\arrow[equal]{d} 
& & (\mathsf m\times \id_X)^\ast  \sigma^\ast\mathscr F\arrow[equal]{d} \\
p_{23}^\ast p_2^\ast \mathscr F\arrow{d}{p_{23}^\ast\vartheta}
& & (\id_G\times\sigma)^\ast  \sigma^\ast\mathscr F \\
p_{23}^\ast\sigma^\ast\mathscr F \arrow[equal]{rr} 
& & (\id_G\times\sigma)^\ast p_2^\ast \mathscr F \arrow[swap]{u}{(\id_G\times\sigma)^\ast\vartheta}
\end{tikzcd}
\]
commutes in $\QCoh_{G\times G\times X}$.

\begin{remark}\label{remark:action_is_FLAT}
Let $(X,\sigma)$ be a $G$-scheme. Then $\sigma$ is flat. Indeed, it agrees with the composition
\[
G \times X \xrightarrow{\gamma} G \times X \xrightarrow{p_2} X
\]
where $\gamma$ is the automorphism $(g,x) \mapsto (g,\sigma(g,x))$, having $(g,x) \mapsto (g,\sigma(g^{-1},x))$ as inverse.
\end{remark}

\begin{definition}
A morphism $(\mathscr F,\vartheta) \to (\mathscr F',\vartheta')$ of $G$-equivariant quasi-coherent sheaves is a morphism $\phi\colon \mathscr F \to \mathscr F'$ in $\QCoh_X$ such that the diagram
\begin{equation}\label{diag:morphism_of_equivariant_sheaves}
\begin{tikzcd}
p_2^\ast\mathscr F \arrow[swap]{d}{\vartheta} \arrow{r}{p_2^\ast \phi} &
p_2^\ast\mathscr F' \arrow{d}{\vartheta'} \\
\sigma^\ast \mathscr F \arrow{r}{\sigma^\ast\phi} &
\sigma^\ast \mathscr F'
\end{tikzcd}
\end{equation}
commutes in $\QCoh_{G\times X}$.
\end{definition}

\begin{notation}
Let $\mathcal C$ be any of the categories $\Mod_{\OO_X}$, $\QCoh_X$ or $\Coh_X$. We let $\mathcal C^G$ denote the corresponding category of $G$-equivariant sheaves $(\mathscr F,\vartheta)$ where $\mathscr F \in \mathcal C$. We mainly focus on $\mathcal C = \QCoh_X$.
The category $\QCoh_X^G$ is a $\BC$-linear abelian category 
(see also Lemma \ref{lemma:Grothendieck_cat} for a stronger statement).
Its (unbounded) derived category will be denoted $\derived(\QCoh_X^G)$. 
\end{notation}
 
Consider the composite isomorphism
\[
\rho_g\colon X \,\simto\, \set{g}\times X \into G\times X \xrightarrow{\sigma} X,\quad x \mapsto \sigma(g,x).
\]  
Every object $(\mathscr F,\vartheta) \in \QCoh_X^G$ comes with a collection of isomorphisms
\[
\vartheta_{g} \colon \mathscr F \,\simto\, \rho_g^\ast\mathscr F,\quad g\in G,
\]
satisfying $\vartheta_{hg} = \rho_g^\ast\vartheta_{h}\circ \vartheta_{g}$, where $\vartheta_{g}$ is the restriction of $\vartheta$ along $X\,\simto\,\set{g} \times X \subset G\times X$.

\begin{remark}[Relative version]\label{rmk:relative}
In general, when working with a flat group scheme $G\to B$ acting on a scheme $X\to B$, where $B$ is a base scheme, 
a $G$-equivariant sheaf $(\mathscr F,\vartheta)$ can be described in the following equivalent fashion. 
Some notation first. For every $B$-scheme $T$, set $X_T = T\times_BX = T\times_T X_T$ and 
let $\mathscr F_T$ denote the pullback of $\mathscr F$ along the projection $X_T \to X$. For 
every $T$-valued point $g\colon T \to G_T = T\times_BG$ of $G$ one has an isomorphism of $T$-schemes
\[
\rho_g\colon X_T \xrightarrow{g \times \id_{X_T}} G_T\times_T X_T \xrightarrow{\sigma_T}X_T,\quad (t,x) \mapsto (t,\sigma_T(g(t),x)).
\]
The condition `$\mathscr F$ is $G$-equivariant' is equivalent to the following condition: for every $T$-valued point $g \in G_T(T)$ as above there is an isomorphism $\vartheta_g \colon \mathscr F_T \simto \rho_g^\ast \mathscr F_T$ 
such that for every pair of $T$-valued points $g$, $h \in G_T(T)$ one has a commutative diagram of isomorphisms
\begin{equation}\label{equivariance_diagram}
\begin{tikzcd}
\rho_{g}^\ast \rho_{h}^\ast \mathscr F_T \arrow[equal]{d} & \rho_g^\ast \mathscr F_T \arrow[swap]{l}{\rho_g^\ast \vartheta_h} \\
\rho_{hg}^\ast \mathscr F_T & \mathscr F_T\arrow[swap]{l}{\vartheta_{hg}}\arrow[swap]{u}{\vartheta_g} 
\end{tikzcd}
\end{equation}
in $\QCoh_{X_T}$.
\end{remark}

\begin{example}\label{ex:equivariant_structure_sheaf}
Let $(X,\sigma)$ be a $G$-scheme over a scheme $B$. Then the structure sheaf $\OO_X$ is $G$-equivariant in a natural way. 
For a $B$-scheme $T$, set $X_T = T\times_BX$. Then the inverse of the natural isomorphisms 
$\rho_g^\ast \OO_{X_T} \simto \rho_g^\ast \rho_{g\ast}\OO_{X_T} \simto \OO_{X_T}$ is a $G$-equivariant structure on $\OO_X$.
\end{example}

\begin{example}
Let $(X,\sigma)$ be a $G$-scheme over a scheme $B$. Then the sheaf $\Omega_{X/B}$ of relative differentials is $G$-equivariant in a natural way. 
Indeed, for a $B$-scheme $T$, consider the natural isomorphisms $\alpha_T\colon (\Omega_{X/B})_T\simto \Omega_{X_T/T}$ and $\ell_g\colon \rho_g^\ast \Omega_{X_T/T}\simto \Omega_{X_T/T}$, where $g \in G_T(T)$. Then the composition
\[
\vartheta_g\colon (\Omega_{X/B})_T\xrightarrow{\alpha_T} \Omega_{X_T/T} \xrightarrow{\ell_g^{-1}} \rho_g^\ast \Omega_{X_T/T}\xrightarrow{\rho_g^\ast \alpha_T^{-1}} \rho_g^\ast (\Omega_{X/B})_T
\]
defines an equivariant structure on $\Omega_{X/B}$.
\end{example}

\begin{notation}
For an object $(\mathscr F,\vartheta) \in \QCoh_X^G$, we will often somewhat sloppily omit the $G$-equivariant structure `$\vartheta$' from the notation.
We will also write $\Hom_X$ instead of $\Hom_{\QCoh_X}$ or $\Hom_{\derived(\QCoh_X)}$, and write $g^\ast$ instead of $\rho_g^\ast$.
\end{notation}

\begin{remark}\label{remark:Hom_is_a_G-rep}
If $(\mathscr F,\vartheta_{\mathscr F})$, $(\mathscr F',\vartheta_{\mathscr F'}) \in \QCoh_X^G$, the $\BC$-vector space
\[
\Hom_{X}(\mathscr F,\mathscr F')
\]
is naturally a $G$-representation. Indeed, for a morphism
$\phi\colon \mathscr F \to \mathscr F'$ in $\QCoh_X$, one defines $g\cdot \phi$ by means of the composition
\begin{equation}\label{diag:action_on_Homs}
\begin{tikzcd}[row sep=large,column sep=large]
\mathscr F \arrow[swap]{d}{\vartheta_{\mathscr F,g}} \arrow[dotted]{r}{g\cdot \phi} & \mathscr F' \\
g^\ast\mathscr F \arrow{r}{g^\ast\phi} & g^\ast\mathscr F'\arrow[swap]{u}{\vartheta^{-1}_{\mathscr F',g}}
\end{tikzcd}
\end{equation}
exploiting the invertibility of $\vartheta_{\mathscr F',g}$. The structure of $G$-representation 
on $\Hom_{X}(\mathscr F,\mathscr F')$ clearly depends on the chosen equivariant structures $\vartheta_{\mathscr F}$ and $\vartheta_{\mathscr F'}$.
\end{remark}

\begin{remark}\label{remark:G-inv_Homs}
It is immediate to see that, in $\QCoh_X^G$, the morphisms are the $G$-invariant morphisms between the underlying quasi-coherent sheaves. In symbols,
\begin{equation}\label{G-inv_homs}
\Hom_{\QCoh_X^G}((\mathscr F,\vartheta_{\mathscr F}),(\mathscr F',\vartheta_{\mathscr F'})) = \Hom_{X}(\mathscr F,\mathscr F')^G.
\end{equation}
Indeed, the diagram \eqref{diag:morphism_of_equivariant_sheaves} becomes precisely 
\[
\begin{tikzcd}
\mathscr F \arrow[swap]{d}{\vartheta_{\mathscr F,g}} \arrow{r}{\phi} & \mathscr F'\arrow{d}{\vartheta_{\mathscr F',g}} \\
g^\ast\mathscr F \arrow{r}{g^\ast\phi} & g^\ast\mathscr F'
\end{tikzcd}
\]
when restricted to $\set{g} \times X\cong X$. Again, in the right hand side of \eqref{G-inv_homs} the `$G$-invariant part' depends on the $G$-structure on $\Hom_X(\mathscr F,\mathscr F')$, which in turn is determined by the pair $(\vartheta_{\mathscr F},\vartheta_{\mathscr F'})$.
\end{remark}

The following result is classical, and is key to this paper. It is proved in \cite[Ex.~12.4.6]{LMB}, but see also 
\cite[Exercise 9.H]{Olsson_book}.

\begin{prop}\label{theorem:derived(quotient_stack)}
Let $G$ be a smooth group scheme, $X$ a $G$-scheme. There is an equivalence
\[
\QCoh_{X}^{G} \,\cong \,\QCoh_{[X/G]}.
\]
\end{prop}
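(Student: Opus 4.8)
The plan is to realise both sides as the category of descent data for quasi-coherent sheaves along a smooth presentation of the quotient stack. Since $G$ is smooth, the canonical map $p\colon X \to [X/G]$ is a smooth surjection — in fact a $G$-torsor — so $X$ is a smooth atlas of the algebraic stack $[X/G]$. The essential geometric input is the identification of the resulting groupoid with the \emph{action groupoid}: there is a canonical isomorphism $X\times_{[X/G]}X \simto G\times X$ under which the two projections to $X$ become $\sigma$ and $p_2$, and, one level up, a canonical isomorphism $X\times_{[X/G]}X\times_{[X/G]}X \simto G\times G\times X$ under which the three face maps down to $X\times_{[X/G]}X \cong G\times X$ become (a relabelling of) $\mathsf m\times\id_X$, $\id_G\times\sigma$ and $p_{23}$. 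These isomorphisms come from unwinding the moduli description of $[X/G]$ in terms of $G$-torsors equipped with an equivariant map to $X$, the atlas corresponding to the trivial torsor.

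Granting this, one invokes smooth descent for quasi-coherent sheaves: for an algebraic stack $\mathscr X$ with smooth atlas $u\colon U\to\mathscr X$, the category $\QCoh_{\mathscr X}$ is equivalent to the category whose objects are pairs $(\mathscr F,\vartheta)$ with $\mathscr F\in\QCoh_U$ and $\vartheta$ an isomorphism between the two pullbacks of $\mathscr F$ to $U\times_{\mathscr X}U$ satisfying the cocycle condition over $U\times_{\mathscr X}U\times_{\mathscr X}U$, with morphisms the morphisms on $U$ commuting with the $\vartheta$'s. Applying this with $\mathscr X=[X/G]$ and $u=p$, and transporting everything through the identifications of the first paragraph, the descent isomorphism becomes exactly an isomorphism $p_2^\ast\mathscr F\simto\sigma^\ast\mathscr F$ (up to the obvious inversion), its cocycle condition becomes \eqref{eqn:cocycle_condition}, and compatibility of morphisms becomes \eqref{diag:morphism_of_equivariant_sheaves}. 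Hence the descent category for $p$ is literally $\QCoh_X^G$ as in Definition \ref{def:equivariant_sheaves}. Concretely, the functor $\QCoh_X^G\to\QCoh_{[X/G]}$ sends $(\mathscr F,\vartheta)$ to the sheaf obtained by descending $\mathscr F$ along $p$, with quasi-inverse $\mathscr G\mapsto(p^\ast\mathscr G,\text{canonical descent datum})$; $\BC$-linearity and exactness are then immediate from those of $p^\ast$ and of descent.

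The one point that genuinely requires care — and where I would simply cite \cite[Ex.~12.4.6]{LMB} and \cite[Exercise 9.H]{Olsson_book} rather than reprove anything — is the validity of smooth descent for $\QCoh$ when, as in our Conventions, $\QCoh_{[X/G]}$ is defined via the lisse-\'etale site of the stack: the lisse-\'etale topos is famously not functorial, so the descent statement used above is not entirely formal at that level of generality. For \emph{quasi-coherent} modules, however (as opposed to arbitrary lisse-\'etale sheaves), this subtlety is harmless and the equivalence holds — this is precisely the content of the cited references. Everything else, namely checking that the two functors are well defined and mutually quasi-inverse, is a routine diagram chase through the identifications of the first paragraph.
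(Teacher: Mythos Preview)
Your proposal is correct and matches the paper's treatment: the paper does not prove this proposition but simply cites \cite[Ex.~12.4.6]{LMB} and \cite[Exercise 9.H]{Olsson_book}, exactly as you do. Your sketch of the underlying argument via smooth descent along the atlas $p\colon X\to[X/G]$ and the identification of the \v{C}ech groupoid with the action groupoid is the standard one and is precisely what those references contain, so there is nothing to add.
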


\begin{example}
Let $X = \Spec k$, for a field $k$. Then $\QCoh^{G}_{\Spec k} \cong \QCoh_{\mathrm B_kG}$, which in turn is equivalent to the category $\Rep_{k}(G)$ of locally finite $k$-linear representations of $G$.
\end{example}

\subsection{Forgetful functor}
There is an exact functor $\Phi\colon \derived(\QCoh_X^G) \to \derived(\QCoh_X)$ that forgets the equivariant structure. This results in a commutative diagram
\[
\begin{tikzcd}
\QCoh_X^G\arrow[hook]{d}\arrow{r}{\Phi} &
\QCoh_X \arrow[hook]{d} 
\\
\derived(\QCoh_X^G)\arrow{r}{\Phi} &
\derived(\QCoh_X)
\end{tikzcd}
\]
where the vertical arrows are the inclusions of the standard hearts. We write $\Phi_X$ when we wish to emphasise the scheme.
More concretely, if $\mathsf p\colon X \to [X/G]$ is the standard smooth atlas, we can identify $\Phi$ as the composition
\begin{equation}\label{factorisation_Phi}
\derived(\QCoh_X^G) \,\simto\, \derived(\QCoh_{[X/G]})  \xrightarrow{\mathsf p^\ast} \derived(\QCoh_X),
\end{equation}
where $\mathsf p^\ast = \LL \mathsf p^\ast$ is the pullback functor as defined in \cite[\S\,7]{Olsson} and the first equivalence comes from Proposition \ref{theorem:derived(quotient_stack)}.

\begin{remark}\label{remark:forget_reflects_exactness}
The forgetful functor $\Phi$ reflects exactness: a sequence in $\QCoh_X^G$ that becomes exact in $\QCoh_X$ was already exact in $\QCoh_X^G$. We will not need this fact.
\end{remark}

\subsection{Geometric functors}\label{sec:underived_functors}
Fix two noetherian $G$-schemes $(X,\sigma_X)$ and $(Y,\sigma_Y)$. All morphims $X \to Y$ in this subsection are assumed to be $G$-equivariant. 
Since $X$ and $Y$ are noetherian, pushforward preserves quasi-coherence.

Let $(\mathscr F,\vartheta)$ and $(\mathscr F',\vartheta')$ be two objects of $\QCoh_X^G$. Then there is a canonical lift
\[
(\mathscr F\otimes \mathscr F',\vartheta\otimes\vartheta') \in \QCoh_X^G
\]
of the object $\mathscr F\otimes \mathscr F' \in \QCoh_X$.
This gives a bi-functor
\[
\QCoh_X^G \times \QCoh_X^G \xrightarrow{\otimes} \QCoh_X^G.
\]
If $f\colon X \to Y$ is a morphism of $G$-schemes, there is a pullback functor
\[
\QCoh_Y^G \xrightarrow{f^\ast} \QCoh_X^G,\quad (\mathscr E,\vartheta_{\mathscr E}) \mapsto (f^\ast \mathscr E,(\id_G\times f)^\ast \vartheta_{\mathscr E}).
\]
By flat base change along $p_2,\sigma_Y\colon G\times Y \rightrightarrows Y$ (cf.~Remark \ref{remark:action_is_FLAT} for flatness of $\sigma_Y$), one constructs a pushforward functor
\[
\QCoh_X^G \xrightarrow{f_\ast} \QCoh_Y^G,\quad (\mathscr F,\vartheta_{\mathscr F}) \mapsto (f_\ast \mathscr F,(\id_G\times f)_\ast\vartheta_{\mathscr F}),
\]
such that $(f^\ast,f_\ast)$ is an adjoint pair. 

Finally, given $(\mathscr F,\vartheta) \in \Coh_X^G$ and $(\mathscr F',\vartheta')\in \QCoh_X^G$, there is a canonical $G$-equivariant structure on the quasi-coherent sheaf $\lHom_{X}(\mathscr F,\mathscr F')$,\footnote{Note that $\lHom_{X}(\mathscr F,\mathscr F')$ is quasi-coherent because we assumed $\mathscr F$ is coherent.} given by $\lHom_{G\times X}(\vartheta,\vartheta')$. More precisely, the isomorphism
\[
\lHom_{G\times X}(\vartheta,\vartheta')\colon \lHom_{G\times X}(p_2^\ast \mathscr F,p_2^\ast \mathscr F') \,\simto\,\lHom_{G\times X}(\sigma_X^\ast \mathscr F,\sigma_X^\ast \mathscr F')
\]
can be used to define the equivariant structure
\[
\begin{tikzcd}[row sep = large]
p_2^\ast \lHom_X(\mathscr F,\mathscr F') \isoarrow{d} \arrow[dotted]{rr}{\vartheta_{\lHom_X(\mathscr F,\mathscr F')}} & & \sigma_X^\ast\lHom_X(\mathscr F,\mathscr F') \\
\lHom_{G\times X}(p_2^\ast\mathscr F,p_2^\ast\mathscr F') \arrow{rr}{\lHom_{G\times X}(\vartheta,\vartheta')} & & \lHom_{G\times X}(\sigma_X^\ast \mathscr F,\sigma_X^\ast \mathscr F')\isoarrow{u}
\end{tikzcd}
\]
in the top row, where to obtain the vertical isomorphisms one exploits the flatness of $p_2$ and of $\sigma_X$, as well as the coherence of $\mathscr F$, see \cite[(6.7.6)]{MR0217083}. This construction defines a bi-functor
\[
\Coh_X^G \times \QCoh_X^G \xrightarrow{\lHom_{X}(-,-)} \QCoh_X^G.
\]

\section{Equivariant derived functors and Grothendieck duality}\label{sec:Equivariant_derived_functors}

This section contains the technical material needed to prove Theorems \ref{main_thm} and \ref{thm:main2}. It can be skipped at a first reading.

Throughout this section we fix an affine (connected) algebraic group $G$ over $\BC$. In particular $G$ is smooth by Cartier's theorem. Moreover, $G$ is \emph{linear}. 
We also assume (with the exception of \S\,\ref{sec:quasi-coh_stacks}, which just records some general definitions) that all schemes are noetherian and separated over $\BC$. 

\subsection{Ample families of equivariant line bundles}
Given a $G$-scheme $X$, the following condition will be crucial:
\[
(\dagger)\quad 
\begin{array}{c}
X\textrm{ has an ample } 
\textrm{family of }G\textrm{-equivariant line bundles}.
\end{array}
\]

Condition $(\dagger)$ means that there exists a family $\set{\mathscr L_i}_{i \in I}$ of $G$-equivariant line bundles such that, for every object $\mathscr E \in \QCoh_X$, the evaluation map yields a surjective morphism
\[
\bigoplus_i\bigoplus_{n>0} \HH^0(X,\mathscr E\otimes \mathscr L_i^{\otimes n}) \otimes_{\BC} (\mathscr L_i^\vee)^{\otimes n} \onto \mathscr E.
\]
Here the index set $I$ is arbitrary, but since $X$ is quasi-compact taking $I$ to be finite yields an equivalent definition.

\begin{example}\label{rmk:G-equiv_emb}
If $X$ is a quasi-projective scheme with a linear $G$-action, then $(\dagger)$ holds. If $X$ is quasi-projective and there exists at least one $G$-equivariant line bundle on $X$, then $X$ admits a $G$-equivariant embedding in a smooth scheme.
\end{example}

\begin{lemma}\label{lemma:equivariant_embedding_implies_dagger}
If $X$ admits a $G$-equivariant immersion in a smooth separated $\BC$-scheme, then $X$ satisfies $(\dagger)$.
\end{lemma}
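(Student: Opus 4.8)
The plan is to reduce Condition $(\dagger)$ for $X$ to the analogous statement for a smooth ambient $G$-scheme. Suppose $j\colon X \into W$ is a $G$-equivariant immersion with $W$ smooth and separated over $\BC$. The key observation is that a smooth separated $\BC$-scheme carries an ample family of \emph{ordinary} line bundles (it is, in particular, a regular separated scheme of finite type, hence admits an ample family by a theorem of Borelli / the standard structure theory for such schemes), and the presence of a $G$-action allows one to upgrade these to a $G$-equivariant ample family by the usual averaging trick available for linear algebraic groups acting on quasi-projective — or, more generally, divisorial — schemes. So the first step is to establish $(\dagger)$ for the smooth scheme $W$ itself. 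Here I would invoke the fact that for a linear algebraic group $G$ acting on a normal quasi-projective variety (or, working Zariski-locally, on an affine piece), every line bundle admits a $G$-linearisation after passing to a suitable tensor power (Mumford's linearisation lemma, as $G$ is connected hence has no nontrivial characters obstructing this in the relevant cases); assembling these over a finite cover by $G$-stable quasi-projective opens — or more cleanly, arguing directly that a smooth separated finite-type $\BC$-scheme with $G$-action is ``$G$-divisorial'' — produces the desired equivariant ample family on $W$.

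The second step is the descent of $(\dagger)$ along the immersion $j$. Given the $G$-equivariant ample family $\{\mathscr L_i\}_{i \in I}$ on $W$, I claim the restricted family $\{j^\ast \mathscr L_i\}_{i \in I}$, which lies in $\QCoh_X^G$ by functoriality of equivariant pullback (\S\,\ref{sec:underived_functors}), witnesses $(\dagger)$ for $X$. The argument is the standard one: for a closed immersion $j$, any $\mathscr E \in \QCoh_X$ is $j_\ast\mathscr E$ as a sheaf on $W$ (by abuse), and surjectivity of the evaluation map $\bigoplus_i\bigoplus_{n>0}\HH^0(W, j_\ast\mathscr E \otimes \mathscr L_i^{\otimes n})\otimes(\mathscr L_i^\vee)^{\otimes n}\onto j_\ast\mathscr E$ on $W$ restricts to the desired surjection on $X$, using $\HH^0(W, j_\ast\mathscr E\otimes \mathscr L_i^{\otimes n}) = \HH^0(X,\mathscr E\otimes j^\ast\mathscr L_i^{\otimes n})$ by the projection formula and $j_\ast j^\ast\mathscr L_i^{\otimes n} \otimes \mathscr E \cong j_\ast(\mathscr E \otimes j^\ast\mathscr L_i^{\otimes n})$. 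For a general immersion (locally closed), one first handles the open part — restriction of an ample family to an open subscheme is again ample — and then the closed part as above; equivalently, factor $j$ as an open immersion followed by a closed one and chase through each, noting that equivariant pullback composes correctly.

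The step I expect to be the main obstacle is the first one: establishing that the smooth ambient scheme $W$ genuinely carries a $G$-\emph{equivariant} ample family, as opposed to merely an ample family. Without a quasi-projectivity hypothesis on $W$ one cannot simply cite Mumford's $\GL$/linearisation results off the shelf; one must argue that $W$ is $G$-divisorial, i.e.\ that every point of $W$ has a $G$-stable affine open neighbourhood (this uses Sumihiro's theorem on $G$-linearisation of quasi-projective-like situations, valid since $G$ is connected linear acting on a normal — here even smooth — variety) and that the line bundles cutting out these opens can be coherently $G$-linearised on overlaps. I would spell this out carefully, since this is precisely the content that makes the lemma nontrivial; the descent in step two is then routine. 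An alternative, cleaner route that sidesteps some of this: observe that the equivalence $\QCoh_X^G \cong \QCoh_{[X/G]}$ of Proposition \ref{theorem:derived(quotient_stack)} reinterprets $(\dagger)$ as the statement that the quotient stack $[X/G]$ has an ample family (in the sense of resolution property for stacks), and a $G$-equivariant immersion $X \into W$ induces a representable immersion $[X/G]\into [W/G]$ with $[W/G]$ smooth; one then cites the known resolution property for smooth quotient stacks by linear groups (Totaro, Gross). I would likely present the direct argument and remark on the stacky reformulation.
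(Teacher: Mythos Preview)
Your approach is essentially the paper's: first produce a $G$-equivariant ample family on the smooth ambient scheme, then pull it back along the immersion. The paper resolves your ``main obstacle'' in one stroke by citing Sumihiro's result that on a normal $G$-variety every line bundle has a $G$-linearisable tensor power (so the ordinary ample family from SGA6 upgrades immediately), and handles the descent by noting that an immersion is quasi-affine --- your explicit open/closed factorisation and the stacky alternative are correct but unnecessary once this citation is in hand.
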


\begin{proof}
Let $i\colon X \into A$ be a $G$-equivariant locally closed embedding, where $A$ is a smooth separated $\BC$-scheme. By \cite[II.2.2.7.1]{SGA6}, $A$ has an ample family of line bundles $\set{\mathscr L_i}_i$. By a result of Sumihiro \cite[Thm.~1.6]{Sumihiro1975}, every line bundle $\mathscr L$ on a normal scheme, such as $A$, has a tensor power $\mathscr L^{\otimes s}$ that is $G$-linearisable (this also uses smoothness of $G$). So $A$ has an ample family of $G$-equivariant line bundles. Since $i$ is a quasi-affine morphism (and is $G$-equivariant), pulling back this family along $i$ yields an ample family of $G$-equivariant line bundles.
\end{proof}

\begin{remark}\label{rmk:resol_prop}
By a result of Thomason \cite{Thomason}, we have the implication
\[
X \textrm{ satisfies }(\dagger)\,\,\,\Rightarrow\,\,\,[X/G]\textrm{ has the resolution property},
\]
where the condition on the right means that every $G$-equivariant \emph{coherent} $\OO_X$-module is the quotient of a $G$-equivariant locally free $\OO_X$-module of finite type.
\end{remark}

\begin{remark}
For a $G$-scheme $X$, Condition $(\dagger)$ is \emph{not} equivalent to the resolution property for the stack $[X/G]$.
In \cite[\S\,9]{Totaro} an example is given of a projective variety $X$ (a nodal cubic curve) acted on by an algebraic group $G$ (the torus $\mathbb G_m$), such that $X$ does not admit a family of $G$-equivariant line bundles. However, the quotient stack $[X/G]$ does have the resolution property \cite[Prop.~9.1]{Totaro}. 
\end{remark}

\subsection{Quasi-coherent sheaves on quotient stacks}\label{sec:quasi-coh_stacks}

For the sake of completeness, and for future reference, we record here a few properties of (quotient) stacks and their derived categories.

\subsubsection{Perfect complexes on schemes} \label{sec:perfect_complexes_on_schemes}

Let $X$ be an arbitrary scheme.

\begin{definition}[{\cite[Section~2]{MR1106918}}]\label{def:strictly_perfect}
 A complex $E\in \derived(X)$ is called \emph{perfect} (resp.~\emph{strictly perfect}) if it is locally (resp.~globally)
quasi-isomorphic to a bounded complex of locally free $\OO_X$-modules of finite type. We let $\Perf X$ denote the category of perfect complexes on $X$.
\end{definition}

\begin{remark}\label{perfect_and_strictly_perfect}
As long as $X$ is quasi-compact, quasi-separated and has an ample family of line bundles, there is no difference between perfect and strictly perfect \cite[Prop.~2.3.1\,(d)]{MR1106918}. For $G$-schemes satisfying condition $(\dagger)$, every perfect complex is then a \emph{bounded} complex. 
\end{remark}

By \cite[\href{https://stacks.math.columbia.edu/tag/08DB}{Tag 08DB}]{stacks-project}, if $X$ is quasi-compact and semi-separated (i.e.~has affine diagonal), the canonical functor $\derived(\QCoh_X) \to \derived_{\qcoh}(X)$ is an equivalence. The same holds true for any noetherian scheme \cite[\href{https://stacks.math.columbia.edu/tag/09TN}{Tag 09TN}]{stacks-project}. Here the decoration `$\qcoh$' means that the cohomology sheaves of the complexes lie in $\QCoh_{X}$. Our schemes will be noetherian, so all statements usually made about $\derived_{\qcoh}(X)$ can, and will be rephrased here using $\derived(\QCoh_X)$.

\subsubsection{Separation and noetherianity for algebraic stacks}\label{sec:quasi-sep}
Let $S$ be a scheme. Recall that a morphism of schemes $X \to S$ is \emph{quasi-separated} if the diagonal $X\to X \times_SX$ is quasi-compact. On the other hand, an algebraic stack $\mathscr X \to S$ is \emph{quasi-separated} if the diagonal $\mathscr X \to \mathscr X\times_S\mathscr X$ is quasi-compact \emph{and quasi-separated}, see~\cite[Def.~8.2.12]{Olsson_book}.

Let $G\to S$ be a smooth quasi-compact separated group scheme acting on a quasi-compact quasi-separated $S$-scheme $X\to S$. An algebraic stack of the form
$[X/G] \to S$ has representable, quasi-compact and separated diagonal~\cite[Ex.~4.6.1]{LMB}, therefore it is quasi-separated. 

An algebraic stack is \emph{noetherian} if it is quasi-compact, quasi-separated and admits a noetherian atlas. 
For instance, if $X$ is a noetherian scheme acted on by a smooth affine algebraic group, then $[X/G]$ is a noetherian algebraic stack. Indeed, $X \to [X/G]$ is an atlas; we just established quasi-separatedness, and quasi-compactness can be checked on an atlas \cite[\href{https://stacks.math.columbia.edu/tag/04YA}{Tag 04YA}]{stacks-project}.

\subsubsection{Compact generation for derived categories}
For an algebraic stack $\mathscr X$, the inclusion $\QCoh_{\mathscr X} \subset \Mod_{\OO_{\mathscr X}}$ of the abelian category of quasi-coherent $\OO_{\mathscr X}$-modules on the lisse-\'etale site of $\mathscr X$ (cf.~\cite[Def.~9.1.6]{Olsson_book}) inside the abelian category of all $\OO_{\mathscr X}$-modules induces a canonical functor
\[
\derived(\QCoh_{\mathscr X}) \to \derived_{\qcoh}(\mathscr X) \subset \derived(\mathscr X).
\]

We now briefly recall the notion of compact generation. It will be essential in the proof of equivariant Grothendieck duality (Theorem \ref{thm:equivariant_Grothendieck}).

\begin{definition}[{\cite[Def.~1.7]{Neeman2}}]\label{def:compact_generation}
A triangulated category $\mathcal S$ with small coproducts is said to be \emph{compactly generated} if there is a set of objects $\mathcal S' \subset \mathcal S$ such that for every $s \in \mathcal S'$ the functor $\Hom_{\mathcal S}(s,-)$ commutes with coproducts, and whenever $y$ is an object of $\mathcal S$ such that $\Hom_{\mathcal S}(s,y)=0$ for all $s \in \mathcal S'$, then it follows that $y=0$.
\end{definition}

\begin{prop}\label{prop:compact_generation}
Let $X$ be a noetherian scheme over $\BC$, acted on by an affine algebraic group $G$ and satisfying $(\dagger)$. Set $\mathscr X = [X/G]$.
The derived category $\derived(\QCoh_{\mathscr X})$ has small coproducts and is compactly generated.
\end{prop}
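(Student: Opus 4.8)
The plan is to deduce both assertions from general properties of Grothendieck abelian categories, together with known results on derived categories of noetherian algebraic stacks having the resolution property. The existence of small coproducts is the formal part: by Lemma~\ref{lemma:Grothendieck_cat} the category $\QCoh_{\mathscr X}$ is Grothendieck, hence cocomplete and satisfying (AB5), so in particular (AB4); therefore a small direct sum of quasi-isomorphisms is again a quasi-isomorphism, the termwise direct sum of a small family of complexes represents their coproduct in $\derived(\QCoh_{\mathscr X})$, and consequently $\derived(\QCoh_{\mathscr X})$ has all small coproducts.

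For the compact generation, I would first record that $\mathscr X=[X/G]$ is a noetherian algebraic stack (\S\ref{sec:quasi-sep}) and that it has the resolution property, since $X$ satisfies $(\dagger)$ (Remark~\ref{rmk:resol_prop}). The argument then rests on two inputs from the theory of perfect complexes on algebraic stacks (Hall--Rydh, Hall--Neeman--Rydh): first, that for a noetherian algebraic stack the canonical functor $\derived(\QCoh_{\mathscr X})\to\derived_{\qcoh}(\mathscr X)$ is an equivalence; second, that a noetherian algebraic stack with the resolution property has $\derived_{\qcoh}(\mathscr X)$ compactly generated, with the compact objects being exactly the perfect complexes --- in fact one may take a single perfect complex, assembled from a finite set of locally free sheaves coming from the resolution property, as a compact generator. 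Granting these, the proposition follows.

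I expect the identification $\derived(\QCoh_{\mathscr X})\simeq\derived_{\qcoh}(\mathscr X)$ to be the delicate point, since for a general algebraic stack these two categories differ because of the lisse-\'etale site, so the noetherian hypothesis must genuinely be used here. A self-contained alternative, which I would fall back on if I wanted to avoid that comparison, is to argue directly in $\derived(\QCoh_{\mathscr X})$: let $\mathcal G$ be the (small, by noetherianity) set of isomorphism classes of locally free $\OO_{\mathscr X}$-modules of finite type, placed in cohomological degree $0$. Each $V\in\mathcal G$ is compact because $\RHom_{\mathscr X}(V,-)\simeq\RR\Gamma(\mathscr X,V^\vee\otimes-)$, the functor $V^\vee\otimes-$ is exact and commutes with coproducts, and $\RR\Gamma(\mathscr X,-)$ commutes with coproducts --- this last point using that $\mathscr X$ has finite cohomological dimension for quasi-coherent sheaves, which follows from Grothendieck vanishing on $X$ together with the finiteness of the cohomological dimension of $\mathrm{B}G$ for $G$ affine over $\BC$. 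That $\mathcal G$ generates then follows from the resolution property, which exhibits every object of $\QCoh_{\mathscr X}$ as a quotient of a coproduct of members of $\mathcal G$ (reducing from coherent to quasi-coherent by noetherianity); a truncation/hypercohomology spectral-sequence argument, convergent precisely because of the finite cohomological dimension, then shows that $\RHom_{\mathscr X}(V,F)=0$ for every $V\in\mathcal G$ forces $F\simeq 0$, i.e.\ $\mathcal G$ is a set of compact generators.
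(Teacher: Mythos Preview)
Your argument for small coproducts is fine and matches the paper's.

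For compact generation, your first route has a genuine gap: the equivalence $\derived(\QCoh_{\mathscr X}) \simeq \derived_{\qcoh}(\mathscr X)$ does \emph{not} follow from noetherianity alone for algebraic stacks (it does for schemes, but the lisse-\'etale site spoils this in general). The result you want, due to Hall--Neeman--Rydh, requires that $\mathscr X$ be quasi-compact with \emph{affine diagonal} and that $\derived_{\qcoh}(\mathscr X)$ already be compactly generated; so the logic runs in the opposite order from what you propose. The paper supplies both missing ingredients explicitly: affine diagonal comes from Totaro's theorem (noetherian stack with the resolution property and affine stabilisers --- the last because $G$ is affine --- has affine diagonal), and compact generation of $\derived_{\qcoh}(\mathscr X)$ comes from Hall--Rydh, using that in characteristic $0$ a quotient by an affine group with an ample family of equivariant line bundles is \emph{concentrated}. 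Only then is Hall--Neeman--Rydh invoked to transport compact generation to $\derived(\QCoh_{\mathscr X})$.

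Your fallback direct argument is much closer to what is actually needed and is essentially correct in outline; the finite cohomological dimension you invoke is precisely the ``concentrated'' condition of Hall--Rydh, and your justification of it (Grothendieck vanishing on $X$ plus finite cohomological dimension of $\mathrm{B}G$ in characteristic $0$) is the right one. One repair: the class of all locally free $\OO_{\mathscr X}$-modules of finite type is not obviously a set, and noetherianity of $\mathscr X$ does not make it one. Use instead the countable family $\{\mathscr L_i^{\otimes n}\}_{i,n}$ supplied by $(\dagger)$, together with its shifts; this is what the paper (following Hall--Rydh) takes as the set of compact generators, and the resolution property then reduces generation to this explicit family.
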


\begin{proof}
First of all, $\derived(\QCoh_{\mathscr X})$ has small coproducts because $\QCoh_{\mathscr X}$ is a Grothendieck abelian category \cite[\href{https://stacks.math.columbia.edu/tag/06WU}{Tag 06WU}]{stacks-project}.

Next, $\mathscr X$ is quasi-separated and noetherian: this was established in \S\,\ref{sec:quasi-sep}.
Since $G$ is affine, $\mathscr X$ has affine stabiliser groups at closed points. By \cite[Prop.~1.3]{Totaro}, a noetherian algebraic stack with affine stabiliser groups at closed points, and having the resolution property, has affine diagonal. Therefore, by Remark \ref{rmk:resol_prop}, $\mathscr X$ has affine diagonal.

Since we work in characteristic $0$ and $X$ has an ample family $\set{\mathscr L_i}$ of $G$-equivariant line bundles, $\mathscr X$ is concentrated (see \cite[Ex.~8.6]{Hall_Rydh_1}, and \cite[Def.~2.4]{Hall_Rydh_1} for the definition of concentrated), thus it has the compact resolution property; therefore we can apply  \cite[Prop.~8.4]{Hall_Rydh_1} to conclude that the family of line bundles $\set{\mathscr L_i}$ (including their shifts), viewed as line bundles over $\mathscr X$, form a family of compact generators for $\derived_{\qcoh}(\mathscr X)$. 

Summing up, $\mathscr X$ is a quasi-compact algebraic stack with affine diagonal and such that $\derived_{\qcoh}(\mathscr X)$ is compactly generated: by \cite[Thm.~1.2]{Hall_Neeman_Rydh} this implies that the canonical functor $\derived(\QCoh_{\mathscr X}) \to \derived_{\qcoh}(\mathscr X)$ is an equivalence. Thus $\derived(\QCoh_{\mathscr X})$ is compactly generated.
\end{proof}

\begin{remark}
See also \cite[Rem.~8.7]{Hall_Rydh_1} for the statement (an example of application of \cite[Prop.~8.4]{Hall_Rydh_1}) that for a quasi-compact concentrated algebraic stack $\mathscr X$ having affine diagonal and the resolution property, the category $\derived_{\qcoh}(\mathscr X)$ is compactly generated.
\end{remark}

\begin{corollary}\label{cor:derived_X^GcompactlyGenerated}
The category $\derived(\QCoh_{X}^{G})$ is compactly generated.
\end{corollary}

\begin{proof}
This follows directly from Proposition \ref{theorem:derived(quotient_stack)}.
\end{proof}

\begin{remark}
Combining Proposition \ref{theorem:derived(quotient_stack)} with the proof of Proposition \ref{prop:compact_generation} shows that if $X$ is a noetherian $\BC$-scheme acted on by an affine algebraic group $G$, and satisfying $(\dagger)$, then we have equivalences
\begin{equation}\label{equivalences}
\derived(\QCoh_X^G) \,\simto\,\derived(\QCoh_{[X/G]}) \,\simto\, \derived_{\qcoh}([X/G]).
\end{equation}
The literature on derived functors for algebraic stacks usually refers to $\derived_{\qcoh}$, but given the equivalences \eqref{equivalences} implied by our assumptions, we will state the results we need for $\derived(\QCoh^G)$.
\end{remark}

\begin{example}
Let $G$ be a group scheme of finite type over a field $k$ of characteristic $0$. Then $\derived_{\qcoh}(\mathrm{B}_{k}G)$ is compactly generated. Moreover, if $G$ is affine, it is compactly generated by the irreducible $k$-representations of $G$, see~\cite[Thm.~A]{MR3436239}. Let $\Rep_{k}(G)$ be the abelian category of $k$-linear locally finite  representations of $G$. Then by \cite[Thm.~1.2]{Hall_Neeman_Rydh} the natural functor 
$\derived(\Rep_{k}(G)) = \derived(\QCoh_{\mathrm{B}_{k}G}) \to \derived_{\qcoh}(\mathrm{B}_{k}G)$ is an equivalence. 
\end{example}

\subsection{Equivariant derived functors}\label{sec:equiv_der_functors}
In this subsection $G$ denotes an affine (connected) complex algebraic group. All $G$-schemes are noetherian, separated over $\BC$, and satisfy $(\dagger)$. In particular all morphisms, which we always assume to be $G$-equivariant, are quasi-compact and separated.

The following result lies at the foundations of the construction of derived versions of the geometric functors recalled in \S\,\ref{sec:underived_functors}.

\begin{lemma}[{\cite[Prop.~1.5.7\,(a) and Prop.~1.5.6~(a)]{VV}}]\label{lemma:Grothendieck_cat}
Let $X$ be a $G$-scheme. The category $\QCoh_X^G$ is a Grothendieck abelian category with enough injectives. 
Moreover, any complex of objects in $\QCoh_X^G$ has a K-injective resolution and a K-flat resolution.
\end{lemma}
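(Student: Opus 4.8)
The plan is to upgrade the abelianness of $\QCoh_X^G$ (already noted before Lemma~\ref{lemma:Grothendieck_cat}) to the Grothendieck property, and then to quote the standard existence theorems for K-injective and K-flat resolutions that hold at that level of generality. Throughout I use the standing hypotheses of this subsection: $X$ is noetherian and separated, satisfies $(\dagger)$, and $G$ is affine and connected, hence smooth. The cleanest route is via Proposition~\ref{theorem:derived(quotient_stack)} (whose proof uses smoothness of $G$): it identifies $\QCoh_X^G$ with $\QCoh_{[X/G]}$, and $\QCoh$ of any algebraic stack is a Grothendieck abelian category by \cite[\href{https://stacks.math.columbia.edu/tag/06WU}{Tag 06WU}]{stacks-project}; since $[X/G]$ is moreover noetherian (as recalled in \S\,\ref{sec:quasi-sep}) it has a generator. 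Alternatively one argues directly: $\QCoh_X^G$ is abelian with kernels, cokernels and images computed in $\QCoh_X$ (using that $p_2$ and $\sigma$ are flat, Remark~\ref{remark:action_is_FLAT}, so that $p_2^\ast$ and $\sigma^\ast$ are exact), it has exact filtered colimits because the forgetful functor $\Phi$ creates them ($p_2^\ast$ and $\sigma^\ast$ being left adjoints), and it has a generator coming from the ample family $(\dagger)$; an (AB5) abelian category with a generator is Grothendieck.

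For the second statement, the K-injective half is then immediate: every Grothendieck abelian category has enough injectives (Grothendieck) and, more strongly, every possibly unbounded complex over it admits a K-injective resolution (see \cite{stacks-project} and the references therein). Applying this to $\QCoh_X^G$ simultaneously yields the assertion that $\QCoh_X^G$ has enough injectives.

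For K-flat resolutions I would first observe that $\QCoh_X^G$ has \emph{enough flats}. Indeed, by $(\dagger)$ every object of $\QCoh_X^G$ is a quotient (in $\QCoh_X^G$, by naturality of the evaluation map) of an equivariant sheaf of the form $\bigoplus_j W_j\otimes_\BC\mathscr L_j^\vee$, where each $W_j$ is a representation of $G$ and each $\mathscr L_j$ belongs to the ample family; such a sheaf is flat for the tensor bi-functor $\QCoh_X^G\times\QCoh_X^G\xrightarrow{\otimes}\QCoh_X^G$ of \S\,\ref{sec:underived_functors} (flatness being detected by $\Phi$, and a twisted direct sum of duals of line bundles being locally free). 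Equivalently one may use the resolution property of $[X/G]$ (Remark~\ref{rmk:resol_prop}) on coherent objects. Given enough flats, the standard construction of K-flat resolutions — resolve each term of the complex by flats, totalise, and pass to a suitable homotopy colimit to accommodate the unbounded case — produces a K-flat resolution of an arbitrary complex in $\QCoh_X^G$.

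The only genuinely non-formal ingredient is the \emph{unbounded} case of the last two steps: for bounded-below (resp.\ bounded-above) complexes the constructions are the classical inductive ones, whereas for arbitrary complexes one must invoke the set-theoretic apparatus of Grothendieck categories — a small-object/cardinality argument, or a Bousfield localisation of the homotopy category of complexes. This is precisely what is carried out in the reference, so in practice the lemma is obtained by citing \cite[Prop.~1.5.6\,(a) and Prop.~1.5.7\,(a)]{VV}.
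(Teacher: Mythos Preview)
Your proposal is correct, but note that the paper does not actually prove this lemma: it is stated with a citation to \cite[Prop.~1.5.7\,(a) and Prop.~1.5.6\,(a)]{VV} in the header and no proof environment follows. The only commentary the paper adds is the subsequent remark that Serp\'e's theorem \cite[Thm.~3.13]{MR1948842} gives K-injective resolutions in any Grothendieck category, and that the K-flat/K-injective resolutions of \cite{VV} are the equivariant analogues of Spaltenstein's.

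Your sketch therefore goes well beyond what the paper does, and it is sound. The route via Proposition~\ref{theorem:derived(quotient_stack)} and \cite[\href{https://stacks.math.columbia.edu/tag/06WU}{Tag 06WU}]{stacks-project} for the Grothendieck property is exactly the argument the paper itself uses later (in the proof of Proposition~\ref{prop:compact_generation}) to get small coproducts, so you are in line with the paper's internal logic. Your use of $(\dagger)$ to produce enough flats is also consistent with the standing hypotheses of \S\,\ref{sec:equiv_der_functors}. One minor point: for the Grothendieck property via the stack route you do not need $(\dagger)$ at all (Tag 06WU applies to arbitrary algebraic stacks), so the first half of the lemma holds in slightly greater generality than your direct argument suggests; $(\dagger)$ is only genuinely used for the K-flat part, matching how \cite{VV} separates the two statements.
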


\begin{remark}
Serp\'e proved that, in fact, unbounded complexes on any Grothendieck category admit K-injective resolutions \cite[Thm.~3.13]{MR1948842}.
\end{remark}

\begin{remark}
The resolutions mentioned in Lemma \ref{lemma:Grothendieck_cat} (which are carefully defined in \cite[Def.~1.5.3\,(c),\,(d)]{VV}) are precisely the $G$-equivariant analogues of those used by Spaltenstein to construct derived functors for unbounded derived categories in the non-equivariant case, see in particular Definitions 1.1 and 5.1 in \cite{Spaltenstein}.
\end{remark}

K-flat and K-injective resolutions allow one to define equivariant derived functors. As explained in \cite[\S\,1.5]{VV}, K-flat resolutions are needed to construct derived tensor product and derived pullback, whereas K-injective resolutions are used to construct derived pushforward.

\begin{prop}[{\cite[Prop.~1.5.6,~1.5.7]{VV}}]
Let $X$ be a $G$-scheme. There is a left derived functor
\[
\otimes^{\LL}\colon \derived(\QCoh_X^G) \times \derived(\QCoh_X^G) \to \derived(\QCoh_X^G).
\]
If $f\colon X\to Y$ is a 
morphism of $G$-schemes, there is a left derived functor
\[
\LL f^\ast\colon \derived(\QCoh_Y^G) \to \derived(\QCoh_X^G),
\]
and a right derived functor
\[
\RR f_\ast\colon \derived(\QCoh_X^G) \to \derived(\QCoh_Y^G).
\]
\end{prop}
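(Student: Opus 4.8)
The plan is to deduce all three statements formally from Lemma \ref{lemma:Grothendieck_cat} by Spaltenstein's method of unbounded resolutions; the only geometric input is the existence of the underived functors $\otimes$, $f^\ast$ and $f_\ast$ on $\QCoh_X^G$ recorded in \S\,\ref{sec:underived_functors}, together with the fact (Lemma \ref{lemma:Grothendieck_cat}) that every complex over $\QCoh_X^G$ admits both a $K$-injective and a $K$-flat resolution.

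First I would construct $\RR f_\ast$. For a complex $A^\bullet \in \derived(\QCoh_X^G)$, choose a $K$-injective resolution $A^\bullet \simto I^\bullet$ and set $\RR f_\ast A^\bullet := f_\ast I^\bullet$. Well-definedness: a quasi-isomorphism between $K$-injective complexes is a homotopy equivalence — its cone is acyclic and $K$-injective, hence contractible — and the additive functor $f_\ast$ preserves homotopy equivalences. Morphisms are lifted to the resolutions using that maps into a $K$-injective complex coincide in the homotopy category and in $\derived(\QCoh_X^G)$, so functoriality is automatic. That $\RR f_\ast$ is triangulated follows by representing a distinguished triangle by a termwise-split short exact sequence of $K$-injective complexes; $f_\ast$ preserves termwise-split exactness, hence sends this to a distinguished triangle.

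For the left derived functors I would use $K$-flat resolutions. Given $A^\bullet$, pick a $K$-flat resolution $P^\bullet \simto A^\bullet$ and put $A^\bullet \otimes^{\LL} B^\bullet := \Tot(P^\bullet \otimes B^\bullet)$ and $\LL f^\ast A^\bullet := f^\ast P^\bullet$. Two lemmas are needed. (a) Tensoring or pulling back a quasi-isomorphism against a $K$-flat complex again gives a quasi-isomorphism: for $\otimes$ this is the definition of $K$-flatness, while for $\LL f^\ast$ one checks that $f^\ast$ carries $K$-flat complexes to $K$-flat complexes — because $f^\ast$ of a flat object of $\QCoh_X^G$ is flat, a statement one reduces to the non-equivariant case via the exact faithful forgetful functor $\Phi$ of \S\,\ref{sec:underived_functors} (equivalently, via Proposition \ref{theorem:derived(quotient_stack)}), and because $f^\ast$ commutes with the colimits defining $K$-flatness. (b) Any two $K$-flat resolutions of $A^\bullet$ are connected by quasi-isomorphisms through a common $K$-flat refinement over $A^\bullet$; combined with (a) this yields independence of the chosen resolution and symmetry of $\otimes^{\LL}$ in its two variables. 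Functoriality and the triangulated structure are then verified as before, using in addition that $\otimes$ of two $K$-flat complexes is $K$-flat, so that the construction is closed under iteration and the usual associativity and symmetry constraints descend.

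I expect the main obstacle to be exactly the left derived functors. $K$-flat resolutions, in contrast to $K$-injective ones, are \emph{not} unique up to homotopy equivalence, so the clean homotopy-category argument available for $\RR f_\ast$ must be replaced by the $K$-flatness bookkeeping of (a)--(b), including the verification that $f^\ast$ and $\otimes$ preserve $K$-flatness for equivariant sheaves. One could instead hope to use $K$-projective resolutions, for which homotopy uniqueness does hold, but these need not exist in a general Grothendieck abelian category; the $K$-flat route supplied by Lemma \ref{lemma:Grothendieck_cat} is the robust one.
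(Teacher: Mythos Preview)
Your proposal is correct and follows exactly the approach the paper indicates: the paper does not give its own proof but cites \cite[Prop.~1.5.6,~1.5.7]{VV}, preceded by the one-line summary that ``K-flat resolutions are needed to construct derived tensor product and derived pullback, whereas K-injective resolutions are used to construct derived pushforward''. Your write-up simply unpacks this Spaltenstein-style construction in detail; the only minor imprecision is that the preservation of $K$-flatness under $f^\ast$ is not literally a consequence of termwise flatness plus colimits, but is more cleanly seen (as you also hint) by reducing via the exact faithful forgetful functor $\Phi$ to the non-equivariant statement.
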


To construct the equivariant derived sheaf Hom functor, we proceed as follows. We consider the quotient stack $\mathscr X = [X/G]$ and the category $\Perf (\mathscr X) \subset \derived_{\qcoh}(\mathscr X)$ of perfect complexes. We also let $\Perf^G(X) \subset \derived(\QCoh_X^G)$ be the image of $\Perf(\mathscr X)$ under the equivalence $\derived_{\qcoh}(\mathscr X)\simto \derived(\QCoh_X^G)$.

By \cite[Lemma 4.3\,(2)]{Hall_Rydh_1}, the bifunctor 
\begin{equation}\label{derived_rhom}
\RRlHom_{\mathscr X}(-,-)\colon \derived(\mathscr X) \times \derived(\mathscr X)^{\op} \to \derived(\mathscr X) 
\end{equation}
calculated in $\OO_{\mathscr X}$-modules restricts to
\[
\RRlHom_{\mathscr X}(-,-)\colon \Perf(\mathscr X) \times \derived(\QCoh_{\mathscr X})^{\op}\to \derived(\QCoh_{\mathscr X}).
\]
Let $\mathsf p\colon X \to \mathscr X$ denote the standard atlas. Exploiting the equivalences \eqref{equivalences} and the factorisation \eqref{factorisation_Phi}, we obtain a diagram
\[
\begin{tikzcd}[row sep = large]
\Perf(\mathscr X) \times \derived(\QCoh_{\mathscr X})^{\op} \isoarrow{d}\arrow{rr}{\RRlHom_{\mathscr X}(-,-)}\arrow[bend right = 80,swap]{dd}{\mathsf p^\ast\times \mathsf p^\ast}
& & \derived(\QCoh_{\mathscr X})\isoarrow{d} \arrow[bend left = 70]{dd}{\mathsf p^\ast} \\
\Perf^G(X)\times \derived(\QCoh_X^G)^{\op} \arrow{d}{\Phi\times\Phi} \arrow[dotted]{rr}{\RRlHom_{X}(-,-)}
& & \derived(\QCoh_X^G) \arrow{d}{\Phi}\\
\Perf(X)\times \derived(\QCoh_X)^{\op} \arrow{rr}{\RRlHom_{X}(-,-)}
& & \derived(\QCoh_X) 
\end{tikzcd}
\]
where the bottom row is the ordinary derived sheaf Hom functor. 
The top square is used to \emph{define} the $G$-equivariant $\RRlHom_{X}(-,-)$ in the middle row, so it commutes by construction, whereas the commutativity of the whole diagram, which is equivalent to the statement
\begin{equation}\label{RHom_commute_with_PB}
\mathsf p^\ast \RRlHom_{\mathscr X}(\mathscr E^\bullet,\mathscr F^\bullet) = \RRlHom_X(\mathsf p^\ast \mathscr E^\bullet,\mathsf p^\ast \mathscr F^\bullet),
\end{equation}
is used to observe that the lower square is also commutative: this means that the $G$-equivariant $\RRlHom_{X}(-,-)$ commutes with the forgetful functor $\Phi$.

We thank David Rydh for pointing out the following.

\begin{remark}
 The ``quasi-coherent'' derived sheaf Hom functor
\[
\RRlHom_{\mathscr X}^{\qcoh}(-,-)\colon \derived_{\qcoh}(\mathscr X)\times \derived_{\qcoh}(\mathscr X)^{\op} \to \derived_{\qcoh}(\mathscr X)
\]
considered in \cite[\S\,1.2]{Hall_Rydh_1}, defined by composing the restriction of \eqref{derived_rhom} with the \emph{quasi-coherator} $\derived(\mathscr X) \to \derived_{\qcoh}(\mathscr X)$, does not commute with pullback in general. However, \eqref{RHom_commute_with_PB} holds true precisely because we restricted to perfect complexes in the first entry. In this case, if we take $(\mathscr E^\bullet,\mathscr F^\bullet) \in \Perf(\mathscr X) \times \derived_{\qcoh}(\mathscr X)$,  we have indeed
\[
\RRlHom_{\mathscr X}(\mathscr E^\bullet,\mathscr F^\bullet)=\RRlHom^{\qcoh}_{\mathscr X}(\mathscr E^\bullet,\mathscr F^\bullet) = \mathscr E^{\bullet \vee}\otimes^{\LL}\mathscr F^\bullet,
\]
and \eqref{RHom_commute_with_PB} follows since duals of perfect complexes, which are dualisable, commute with arbitrary pullback. See \cite[Lemma 4.3]{Hall_Rydh_1} and the paragraph before it for more details.
\end{remark}

The $G$-equivariant derived functors listed above satisfy the usual compatibilities. Here are some of them: given a morphism $f\colon X\to Y$ of $G$-schemes,
\begin{enumerate}
\item $\LL f^\ast$ is left adjoint to $\RR f_\ast$, 
\item $\RR f_\ast$ preserves 
cohomologically bounded complexes,
\item $\LL f^\ast$, $\RR f_\ast$, ${\otimes}^{\LL}$ and $\RRlHom_X(-,-)$ commute with the forgetful functor $\Phi$ (cf.~\cite[Section~1.5.8]{VV}). \label{commute_with_Phi} 
\item The projection formula 
\[
\mathscr E^\bullet {\otimes}^{\LL} \,\RR f_\ast\mathscr F^\bullet = \RR f_\ast\bigl(\LL f^\ast \mathscr E^\bullet{\otimes}^{\LL} \mathscr F^\bullet \bigr)
\]
holds, for all $\mathscr F^\bullet \in \derived(\QCoh_X^G)$ and $\mathscr E^\bullet \in \derived(\QCoh_Y^G)$.
\end{enumerate} 

For us, the most important property is \eqref{commute_with_Phi}.

\subsection{Equivariant Ext groups}
For a $G$-scheme $X$ with structure morphism $\pi\colon X\to \Spec \BC$, we write $\RR\Gamma_X = \RR \pi_\ast$.
Since the $G$-equivariant derived functors commute with the forgetful morphism,
given $(\mathscr E^\bullet,\mathscr F^\bullet)\in \Perf^G(X) \times \derived(\QCoh_X^G)$, the complex
\[
\RR\Gamma_X \RRlHom_X(\mathscr E^\bullet,\mathscr F^\bullet) \,\in\,\derived(\QCoh_{\pt}^G) = \derived(\Rep_{\BC}(G))
\]
is a complex of $G$-representations with $\RHom_X(\Phi_X(\mathscr E^\bullet),\Phi_X(\mathscr F^\bullet))$ as underlying complex of vector spaces. We will often omit $\Phi_X$ from the notation.

\begin{remark}\label{rmk:G-structure_on_cohomology} 
The cohomology functors $\HH^k\colon \derived(\QCoh_{\pt}^G) \to \QCoh_{\pt}^G$ also commute with the forgetful functor. 
In other words, for any object 
$V^\bullet \in \derived(\QCoh_{\pt}^G)$, there is a natural structure of $G$-representation on the vector spaces $\HH^k(\Phi_{\pt}(V^\bullet))$.
Thus, given $(\mathscr E^\bullet,\mathscr F^\bullet)\in \Perf^G(X) \times \derived(\QCoh_X^G)$, all Ext groups
\begin{equation}\label{ext}
\Ext^k_X(\mathscr E^\bullet,\mathscr F^\bullet) \defeq \HH^k(\RHom_X(\Phi_X(\mathscr E^\bullet),\Phi_X(\mathscr F^\bullet)))\,\in\,\QCoh_{\pt}
\end{equation}
have a natural structure of $G$-representations. Therefore the $G$-invariant part
\[
\Ext^k_X(\mathscr E^\bullet,\mathscr F^\bullet)^G \subset \Ext^k_X(\mathscr E^\bullet,\mathscr F^\bullet)
\]
is well-defined.
\end{remark}

\smallbreak
We now describe the $G$-representation structure on \eqref{ext} explicitly. We set $k=0$, the general case being obtained by replacing $\mathscr F^\bullet$ with $\mathscr F^\bullet[k]$.

Fix a pair $(\mathscr E^\bullet,\mathscr F^\bullet)\in \Perf^G(X) \times \derived(\QCoh_X^G)$ and a morphism 
$\alpha\colon \mathscr E^\bullet \to \mathscr F^\bullet$ in $\derived(\QCoh_X)$. For simplicity, assume
$\alpha$ is represented by a cochain map    
\begin{equation}\label{chain_map}
\begin{tikzcd}
\cdots \arrow{r} & \mathscr E^i \arrow{r}{\dd_{\mathscr E^\bullet}^i} \arrow[swap]{d}{\alpha_i} & \mathscr E^{i+1}\arrow{r}\arrow{d}{\alpha_{i+1}} & \cdots \\
\cdots \arrow{r} & \mathscr F^i \arrow{r}{\dd_{\mathscr F^\bullet}^i} & \mathscr F^{i+1}\arrow{r} & \cdots
\end{tikzcd}
\end{equation}
where all arrows are in the category $\QCoh_X$. The sheaves $\mathscr E^i$ (resp.~$\mathscr F^i)$ carry a $G$-equivariant 
structure $\vartheta_{\mathscr E^i}$ (resp.~$\vartheta_{\mathscr F^i}$). We let $g \in G$ act on $\alpha = (\alpha_i)_{i \in \BZ}$ 
by $g\cdot \alpha = (g\cdot \alpha_i)_{i \in \BZ}$, where the element $g\cdot \alpha_i = \vartheta_{\mathscr F^{i},g}^{-1} \circ g^\ast \alpha_i \circ \vartheta_{\mathscr E^{i},g} \in \Hom_X(\mathscr E^i,\mathscr F^i)$ was defined via Diagram \eqref{diag:action_on_Homs}. For fixed $g \in G$ and $i \in \BZ$, the diagram
\begin{equation}\label{diag:G-action_chain_complex}
\begin{tikzcd}[row sep=large,column sep=large]
\mathscr E^i \arrow{r}{\dd_{\mathscr E^\bullet}^i}\arrow{d}{\vartheta_{\mathscr E^i,g}}\arrow[bend left=-425,swap]{ddd}[description]{g\cdot \alpha_i} 
& \mathscr E^{i+1}\arrow{d}{\vartheta_{\mathscr E^{i+1},g}}\arrow[bend right=-425]{ddd}[description]{g\cdot \alpha_{i+1}} \\
g^\ast \mathscr E^i \arrow{r}{g^\ast\dd_{\mathscr E^\bullet}^i}\arrow{d}{g^\ast \alpha_i}
&  g^\ast \mathscr E^{i+1}\arrow{d}{g^\ast\alpha_{i+1}} \\
g^\ast \mathscr F^i \arrow{r}{g^\ast\dd_{\mathscr F^\bullet}^i}\arrow{d}{\vartheta_{\mathscr F^i,g}^{-1}}
& g^\ast \mathscr F^{i+1} \arrow{d}{\vartheta_{\mathscr F^{i+1},g}^{-1}} \\
\mathscr F^i \arrow{r}{\dd_{\mathscr F^\bullet}^i} 
& \mathscr F^{i+1}
\end{tikzcd}
\end{equation}
in $\QCoh_X$ illustrates the situation: since $(\mathscr E^\bullet,\dd_{\mathscr E^\bullet})$ and $(\mathscr F^\bullet,\dd_{\mathscr F^\bullet})$ are objects of $\derived(\QCoh_X^G)$, the morphisms $\dd_{\mathscr E^\bullet}^i$ and $\dd_{\mathscr F^\bullet}^i$, as soon as we view them in $\QCoh_X$, satisfy 
\begin{align*}
\dd_{\mathscr E^\bullet}^i &= g\cdot \dd_{\mathscr E^\bullet}^i = \vartheta^{-1}_{\mathscr E^{i+1},g}\circ g^\ast \dd^i_{\mathscr E^\bullet}\circ \vartheta_{\mathscr E^{i},g}\\
\dd_{\mathscr F^\bullet}^i &= g\cdot \dd_{\mathscr F^\bullet}^i = \vartheta^{-1}_{\mathscr F^{i+1},g}\circ g^\ast \dd^i_{\mathscr F^\bullet}\circ \vartheta_{\mathscr F^{i},g}
\end{align*} 
respectively, for all $g \in G$ (cf.~Remark \ref{remark:G-inv_Homs}).
Therefore the top and bottom squares commute. So does the middle square, by the commutativity of \eqref{chain_map}. Therefore the outer square commutes, thus defining the morphism $g\cdot \alpha \in \Hom_X(\mathscr E^\bullet,\mathscr F^\bullet)$.

\begin{remark}\label{identity_is_equivariant}
Let $\mathscr E^\bullet$ be an object of $\Perf^G(X)$. Then
\[
\id_{\mathscr E^\bullet} \,\in\, \Hom_X(\mathscr E^\bullet,\mathscr E^\bullet)^G \subset \Hom_X(\mathscr E^\bullet,\mathscr E^\bullet).
\]
This is clear by looking at the diagram \eqref{diag:G-action_chain_complex} where all $\alpha_i = \id_{\mathscr E^i}$.
\end{remark}

\begin{lemma}\label{lemma:equiv_EXT}
The following statements hold.
\begin{enumerate}
\item
Fix $\mathscr F^\bullet \in \Perf^G(X)$. A distinguished triangle $\mathscr E_1^\bullet \to \mathscr E_2^\bullet \to \mathscr E_3^\bullet \to \mathscr E_1^\bullet[1]$ in $\derived(\QCoh_X^G)$ induces a long exact sequence
\[
\cdots \to \Ext^k_X(\mathscr F^\bullet,\mathscr E_1^\bullet) \to \Ext^k_X(\mathscr F^\bullet,\mathscr E_2^\bullet) \to \Ext^k_X(\mathscr E_3^\bullet,\mathscr F^\bullet)  \to \Ext^{k+1}_X(\mathscr F^\bullet,\mathscr E_1^\bullet) \to \cdots
\]
of $G$-representations. \label{statement1}
\item
Fix $\mathscr F^\bullet \in \derived(\QCoh_X^G)$. A distinguished triangle $\mathscr E_1^\bullet \to \mathscr E_2^\bullet \to \mathscr E_3^\bullet \to \mathscr E_1^\bullet[1]$ in $\Perf^G(X)$ induces a long exact sequence
\[
\cdots \to \Ext^k_X(\mathscr E_3^\bullet,\mathscr F^\bullet) \to \Ext^k_X(\mathscr E_2^\bullet,\mathscr F^\bullet) \to \Ext^k_X(\mathscr E_1^\bullet,\mathscr F^\bullet)  \to \Ext^{k+1}_X(\mathscr E_3^\bullet,\mathscr F^\bullet) \to \cdots
\]
of $G$-representations.\label{statement2}
\end{enumerate}
\end{lemma}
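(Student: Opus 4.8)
The plan is to deduce both long exact sequences by applying a single triangulated functor to the given distinguished triangle and then passing to cohomology, the entire argument taking place inside the equivariant derived category $\derived(\QCoh_{\pt}^G) = \derived(\Rep_\BC(G))$, so that the resulting sequence is automatically one of $G$-representations.

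For statement \eqref{statement1} I would fix $\mathscr F^\bullet \in \Perf^G(X)$ and apply the functor
\[
\RR\Gamma_X\RRlHom_X(\mathscr F^\bullet,-)\colon \derived(\QCoh_X^G) \longrightarrow \derived(\QCoh_{\pt}^G)
\]
to the triangle $\mathscr E_1^\bullet \to \mathscr E_2^\bullet \to \mathscr E_3^\bullet \to \mathscr E_1^\bullet[1]$. This functor is triangulated: by the construction of the equivariant $\RRlHom_X$ in \S\,\ref{sec:equiv_der_functors}, a perfect first argument gives $\RRlHom_X(\mathscr F^\bullet,-) = \mathscr F^{\bullet\vee}\otimes^{\LL}-$, which is exact, and $\RR\Gamma_X = \RR\pi_\ast$ (with $\pi\colon X\to\pt$ the $G$-equivariant structure morphism) is a right derived functor. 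The image of the triangle is therefore a distinguished triangle in $\derived(\QCoh_{\pt}^G)$, and since $\QCoh_{\pt}^G = \Rep_\BC(G)$ is abelian, applying the cohomology functors $\HH^k$ to it produces a long exact sequence in $\Rep_\BC(G)$; by the definition \eqref{ext} of the equivariant Ext groups this is precisely the sequence claimed. Finally, because $\Phi$ commutes with $\RR\Gamma_X$, $\RRlHom_X$ and $\HH^k$, forgetting the equivariant structure returns the classical long exact Ext sequence, so what we have built genuinely enhances it.

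For statement \eqref{statement2} I would instead fix $\mathscr F^\bullet \in \derived(\QCoh_X^G)$ and apply the contravariant triangulated functor
\[
\RR\Gamma_X\RRlHom_X(-,\mathscr F^\bullet)\colon \Perf^G(X)^{\op} \longrightarrow \derived(\QCoh_{\pt}^G),
\]
which makes sense precisely because the $\mathscr E_i^\bullet$ lie in $\Perf^G(X)$ --- the only locus where $\RRlHom_X$ may be formed with that entry in the first slot. Applying it to the triangle (now assumed to live in $\Perf^G(X)$) gives a distinguished triangle in $\derived(\QCoh_{\pt}^G)$ with the arrows reversed, and taking $\HH^k$ again yields the asserted long exact sequence of $G$-representations, with the usual sequence underneath.

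The step I expect to be the only delicate one is not really an obstacle but a matter of correct bookkeeping: one must use that $\RRlHom_X$, $\otimes^{\LL}$ and $\RR\pi_\ast$ are triangulated on the \emph{equivariant} derived categories and compatible with $\Phi$ --- exactly what \S\,\ref{sec:equiv_der_functors} provides --- and that $\HH^k$ of a distinguished triangle in the derived category of the Grothendieck abelian category $\QCoh_{\pt}^G$ (Lemma \ref{lemma:Grothendieck_cat}) is exact, which is standard homological algebra. The placement of the perfectness hypotheses (on $\mathscr F^\bullet$ in \eqref{statement1}, on the triangle in \eqref{statement2}) is forced by the fact that the equivariant $\RRlHom_X$ is only defined with a perfect complex in its first argument; once that is respected the argument is entirely formal.
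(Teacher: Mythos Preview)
Your proposal is correct and follows essentially the same approach as the paper: apply the composite equivariant functor $\RR\Gamma_X\circ\RRlHom_X(\mathscr F^\bullet,-)$ (resp.\ $\RR\Gamma_X\circ\RRlHom_X(-,\mathscr F^\bullet)$) to the triangle and then take cohomology in $\derived(\Rep_\BC(G))$. You supply more detail than the paper does --- in particular the justification of triangulatedness via $\RRlHom_X(\mathscr F^\bullet,-)=\mathscr F^{\bullet\vee}\otimes^{\LL}-$ and the explanation of why the perfectness hypotheses sit where they do --- but the underlying argument is identical.
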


\begin{proof}
To prove \eqref{statement1}, apply the composition $\RR\Gamma_X\circ \RRlHom_X(\mathscr F^\bullet,-)$ of equivariant derived functors, and then cohomology 
$\HH^\bullet \colon \derived(\Rep_{\BC}(G)) \to \Rep_{\BC}(G)$, to the given distinguished triangle: exploiting Remark \ref{rmk:G-structure_on_cohomology}, this yields the first sequence. To prove \eqref{statement2}, use $\RRlHom_X(-,\mathscr F^\bullet)$.
\end{proof}

We will only need the following special case. 

\begin{corollary}\label{lemma:equivariant_composition}
The following statements hold.
\begin{enumerate}
\item
Fix $\mathscr F^\bullet \in\Perf^G(X)$. A morphism $j\colon \mathscr E_1^\bullet \to \mathscr E_2^\bullet$ in $\derived(\QCoh_{X}^G)$ induces a morphism of $G$-representations
\[
j_\ast  \colon \Hom_X( \mathscr F^\bullet,\mathscr E_1^\bullet ) \to \Hom_X( \mathscr F^\bullet,\mathscr E_2^\bullet ),\quad \alpha \mapsto  j\circ \alpha.
\]
\item
Fix $\mathscr F \in \derived(\QCoh_X^G)$. A morphism $i\colon \mathscr E_1^\bullet \to \mathscr E_2^\bullet$ in $\Perf^G(X)$ induces a morphism of $G$-representations
\[
i^\ast \colon \Hom_X( \mathscr E_2^\bullet,\mathscr F^\bullet ) \to \Hom_X( \mathscr E_1^\bullet,\mathscr F^\bullet ), \quad \beta \mapsto \beta\circ i.
\]
\label{item84762874}
\end{enumerate}
\end{corollary}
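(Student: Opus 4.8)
The plan is to deduce Corollary~\ref{lemma:equivariant_composition} as the special case $k=0$ of Lemma~\ref{lemma:equiv_EXT}, reading off the relevant map in the long exact sequence as a connecting-free arrow. For part~(1): given $j\colon\mathscr E_1^\bullet\to\mathscr E_2^\bullet$ in $\derived(\QCoh_X^G)$, complete it to a distinguished triangle $\mathscr E_1^\bullet\xrightarrow{j}\mathscr E_2^\bullet\to\mathscr E_3^\bullet\to\mathscr E_1^\bullet[1]$ in $\derived(\QCoh_X^G)$ (possible since $\QCoh_X^G$ has a well-behaved derived category by Lemma~\ref{lemma:Grothendieck_cat}). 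Then apply Lemma~\ref{lemma:equiv_EXT}\eqref{statement1} with the fixed object $\mathscr F^\bullet\in\Perf^G(X)$: the resulting long exact sequence of $G$-representations contains, in the relevant spot, the arrow $\Ext^0_X(\mathscr F^\bullet,\mathscr E_1^\bullet)\to\Ext^0_X(\mathscr F^\bullet,\mathscr E_2^\bullet)$, which by construction is induced by post-composition with $j$ and is a morphism of $G$-representations. That is exactly $j_\ast$. For part~(2): symmetrically, complete $i\colon\mathscr E_1^\bullet\to\mathscr E_2^\bullet$ to a triangle in $\Perf^G(X)$ (note $\Perf^G(X)$ is triangulated, being the image of $\Perf(\mathscr X)$ under an equivalence of triangulated categories), fix $\mathscr F^\bullet\in\derived(\QCoh_X^G)$, and apply Lemma~\ref{lemma:equiv_EXT}\eqref{statement2}; the pullback arrow $\Ext^0_X(\mathscr E_2^\bullet,\mathscr F^\bullet)\to\Ext^0_X(\mathscr E_1^\bullet,\mathscr F^\bullet)$ in that sequence is precisely $i^\ast=\beta\mapsto\beta\circ i$, and it is $G$-equivariant.

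Alternatively, and perhaps more transparently, one can bypass the triangles entirely: the functor $\RR\Gamma_X\circ\RRlHom_X(\mathscr F^\bullet,-)$ is a functor $\derived(\QCoh_X^G)\to\derived(\Rep_{\BC}(G))$, so it sends the morphism $j$ to a morphism $\RR\Gamma_X\RRlHom_X(\mathscr F^\bullet,\mathscr E_1^\bullet)\to\RR\Gamma_X\RRlHom_X(\mathscr F^\bullet,\mathscr E_2^\bullet)$ in $\derived(\Rep_{\BC}(G))$; applying $\HH^0$ (which lands in $\Rep_{\BC}(G)$, by Remark~\ref{rmk:G-structure_on_cohomology}) gives a $G$-equivariant map $\Hom_X(\mathscr F^\bullet,\mathscr E_1^\bullet)\to\Hom_X(\mathscr F^\bullet,\mathscr E_2^\bullet)$. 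Since all equivariant derived functors commute with the forgetful functor $\Phi$ (property~\eqref{commute_with_Phi}), the underlying map of vector spaces is the usual $\alpha\mapsto j\circ\alpha$, which identifies it as $j_\ast$. Part~(2) is the same argument with the contravariant functor $\RR\Gamma_X\circ\RRlHom_X(-,\mathscr F^\bullet)$ restricted to $\Perf^G(X)$.

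I do not anticipate a serious obstacle: the corollary is purely a bookkeeping consequence of the functoriality already packaged into the equivariant derived-functor formalism of \S\ref{sec:equiv_der_functors} and Lemma~\ref{lemma:equiv_EXT}. The one point deserving a sentence of care is that post-composition (resp.\ pre-composition) with $j$ (resp.\ $i$) \emph{at the level of the derived category} genuinely computes the arrow appearing in the long exact sequence of Lemma~\ref{lemma:equiv_EXT} — this is the standard identification of the maps in the $\Ext$ long exact sequence attached to a triangle, and the only new content is that, by property~\eqref{commute_with_Phi}, forgetting the $G$-structure recovers the classical maps, so no ambiguity in the $G$-representation structure arises. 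In writing this up I would simply invoke Lemma~\ref{lemma:equiv_EXT} and note that the displayed maps are the degree-$0$ terms of the respective long exact sequences.
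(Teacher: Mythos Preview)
Your proposal is correct and matches the paper's approach: the paper presents this corollary explicitly as ``the following special case'' of Lemma~\ref{lemma:equiv_EXT} and gives no further proof, so your reading of $j_\ast$ and $i^\ast$ as the degree-$0$ maps in the long exact sequences of that lemma is exactly what is intended. Your alternative functoriality argument is also fine and indeed just unwinds the proof of Lemma~\ref{lemma:equiv_EXT} itself.
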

 
 The following definition will be central in the next sections.

\begin{definition}\label{G-equiv-hom}
Fix $(\mathscr E^\bullet,\mathscr F^\bullet) \in\derived(\QCoh_X^G) \times \derived(\QCoh_X^G)$. We say that an extension class
\[
\alpha \,\in\,\Ext^k_X(\mathscr E^\bullet,\mathscr F^\bullet)
\]
is $G$-\emph{equivariant} if the corresponding morphism $\mathscr E^\bullet \to \mathscr F^\bullet[k]$ admits a lift to $\derived(\QCoh_X^G)$, i.e.~if it lies in the image of the natural morphism $\Hom_{\derived(\QCoh_X^G)}(\mathscr E^\bullet,\mathscr F^\bullet[k]) \to \Hom_{X}(\mathscr E^\bullet,\mathscr F^\bullet[k])$ --- recall that we omit $\Phi$ from the notation in the target Hom-set. We say that $\alpha$ is $G$-\emph{invariant} if it belongs to $\Ext^k_X(\mathscr E^\bullet,\mathscr F^\bullet)^G$.
\end{definition}

\subsubsection{The case of reductive groups}\label{subsec:reductives}
Recall from \cite[App.~A]{MFK} that, over a field of characteristic $0$, 
a linear algebraic group $G$ is \emph{reductive} if and only if it is linearly reductive. This means that the functor
\[
(-)^G\colon \QCoh_{\pt}^G \to \QCoh_{\pt}, \quad V\mapsto V^G,
\]
taking a $G$-representation to its $G$-invariant part, is exact.

Reductivity has the following important property.

\begin{lemma}[{\cite[Lemma 2.2.8]{BFK}}]\label{lemma:G-inv_Ext_groups}
Let $G$ be a reductive algebraic group acting on a complex noetherian separated $G$-scheme $X$. Set  $\mathscr X = [X/G]$ and fix $k \in \BZ$ and two objects $\mathscr E^\bullet$, $\mathscr F^\bullet \in \derived(\QCoh_X^G) = \derived(\QCoh_{\mathscr X})$. Then there are natural isomorphisms
\[
\Hom_{\mathscr X}(\mathscr E^\bullet,\mathscr F^\bullet[k]) \,\simto\, \Hom_{X}(\mathscr E^\bullet,\mathscr F^\bullet[k])^{G}.
\]
\end{lemma}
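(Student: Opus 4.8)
The plan is to express both sides as the $k$-th cohomology of one and the same complex of $G$-representations, using the equivariant derived functors of \S\ref{sec:Equivariant_derived_functors} together with the exactness of the invariants functor for a reductive group. Throughout, write $q\colon\mathscr X = [X/G]\to[\pt/G] = \mathrm{B}G$ for the morphism induced by the structure map $X\to\pt$, so that $\mathscr X\xrightarrow{q}\mathrm{B}G\to\Spec\BC$ composes to the structure morphism of $\mathscr X$; under Proposition~\ref{theorem:derived(quotient_stack)} and the equivalences \eqref{equivalences}, the functor $\RR q_\ast$ is identified with the equivariant pushforward $\RR\Gamma_X$ of \S\ref{sec:equiv_der_functors}, and the equivariant $\RRlHom_X$ is (by construction) carried to $\RRlHom_{\mathscr X}$. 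I will assume $\mathscr E^\bullet$ is perfect, which is the case in all the intended applications (e.g.\ $\mathscr E^\bullet = \RRlHom_X(E,E)_0$ with $E$ perfect); the general case follows by resolving $\mathscr F^\bullet$ by a K-injective complex in $\QCoh_X^G$ (Lemma~\ref{lemma:Grothendieck_cat}) and running the same argument.

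First I would write, for the left-hand side,
\[
\Hom_{\mathscr X}(\mathscr E^\bullet,\mathscr F^\bullet[k]) = \HH^k\bigl(\RR\Gamma_{\mathscr X}\RRlHom_{\mathscr X}(\mathscr E^\bullet,\mathscr F^\bullet)\bigr),
\]
and then factor $\RR\Gamma_{\mathscr X} = \RR\Gamma_{\mathrm{B}G}\circ\RR q_\ast$ by functoriality of pushforward along the composition above. By the identifications just recalled, the object
\[
V^\bullet \defeq \RR q_\ast\RRlHom_{\mathscr X}(\mathscr E^\bullet,\mathscr F^\bullet) = \RR\Gamma_X\RRlHom_X(\mathscr E^\bullet,\mathscr F^\bullet)\in\derived(\Rep_{\BC}(G))
\]
is exactly the complex of $G$-representations introduced before Remark~\ref{rmk:G-structure_on_cohomology}: that remark gives $\HH^k(V^\bullet) = \Ext^k_X(\mathscr E^\bullet,\mathscr F^\bullet)$ as $G$-representations, and, since the equivariant derived functors commute with the forgetful functor $\Phi$, the underlying complex of $V^\bullet$ computes the ordinary groups $\Ext^k_X$, so that by \eqref{ext} we have $\HH^k(V^\bullet) = \Hom_X(\mathscr E^\bullet,\mathscr F^\bullet[k])$ endowed with its natural $G$-action.

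It remains to analyse $\RR\Gamma_{\mathrm{B}G}\colon\derived(\Rep_{\BC}(G))\to\derived(\QCoh_{\pt})$. This is the right derived functor of global sections on $\mathrm{B}G$, i.e.\ of the invariants functor $(-)^G\colon\Rep_{\BC}(G)\to\QCoh_{\pt}$, which is \emph{exact} because $G$ is reductive, hence linearly reductive over $\BC$ (see \S\ref{subsec:reductives}). An exact functor is its own derived functor and commutes with taking cohomology, so $\HH^k(\RR\Gamma_{\mathrm{B}G}V^\bullet) = \HH^k(V^\bullet)^G$ for every $V^\bullet$. Combining the displays,
\[
\Hom_{\mathscr X}(\mathscr E^\bullet,\mathscr F^\bullet[k]) = \HH^k(\RR\Gamma_{\mathrm{B}G}V^\bullet) = \HH^k(V^\bullet)^G = \Ext^k_X(\mathscr E^\bullet,\mathscr F^\bullet)^G = \Hom_X(\mathscr E^\bullet,\mathscr F^\bullet[k])^G .
\]
All the identifications are functorial in $\mathscr E^\bullet$ and $\mathscr F^\bullet$ and compatible with $\Phi$, so the resulting isomorphism is the natural one, sending a morphism in $\derived(\QCoh_X^G)$ to the class of its image under $\Phi$; this is precisely the content of the lemma.

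The step I expect to be the main obstacle is the bookkeeping in the second paragraph: verifying that the equivalence of Proposition~\ref{theorem:derived(quotient_stack)} intertwines the stacky functors $\RR q_\ast$ and $\RRlHom_{\mathscr X}$ with the equivariant functors $\RR\Gamma_X$ and $\RRlHom_X$ (for $\RRlHom$ this holds essentially by construction in \S\ref{sec:equiv_der_functors}; for the pushforward one must unwind the flat-base-change definition of $f_\ast$ given in \S\ref{sec:underived_functors} and its derived version), and, relatedly, making sure $V^\bullet$ genuinely lies in $\derived(\Rep_{\BC}(G))$, i.e.\ is a complex of \emph{locally finite} representations — which is exactly why it is convenient to take $\mathscr E^\bullet$ perfect. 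Once those identifications are in place, the factorization $\RR\Gamma_{\mathscr X} = \RR\Gamma_{\mathrm{B}G}\circ\RR q_\ast$ and the exactness of $(-)^G$ are formal.
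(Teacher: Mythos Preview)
The paper does not give its own proof of this lemma; it is stated with a citation to \cite[Lemma 2.2.8]{BFK} and used as a black box. So there is nothing to compare against directly.

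That said, your argument is correct and is essentially the standard one: factor the structure morphism of $\mathscr X$ through $\mathrm{B}G$, identify $\RR q_\ast\RRlHom_{\mathscr X}$ with the equivariant $\RR\Gamma_X\RRlHom_X$, and then use that for $G$ linearly reductive the derived functor $\RR\Gamma_{\mathrm{B}G}$ is the (exact) invariants functor $(-)^G$, which therefore commutes with $\HH^k$. Your bookkeeping worries are well placed but not serious: the identification of $\RR q_\ast$ with the equivariant $\RR\Gamma_X$ is exactly how the latter is defined (via Proposition~\ref{theorem:derived(quotient_stack)} and the construction in \S\ref{sec:equiv_der_functors}), and the locally-finite issue is a non-issue since $\QCoh_{\mathrm{B}G}$ \emph{is} the category of locally finite representations by definition.

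One small comment on scope. Your reduction to $\mathscr E^\bullet$ perfect is appropriate given how $\RRlHom_X$ is set up in \S\ref{sec:equiv_der_functors}, and it covers every use of the lemma in this paper. Your sketch for the general case via K-injective resolutions needs one more ingredient: you must know that $\Phi$ carries a K-injective complex in $\QCoh_X^G$ to something that still computes $\RHom$ in $\QCoh_X$. A clean way to see this is to avoid $\RRlHom$ altogether and work with the adjunction $\RHom_{\mathscr X}(\mathscr E^\bullet,\mathscr F^\bullet)=\RR\Gamma_{\mathrm{B}G}\bigl(\RHom_X(\mathscr E^\bullet,\mathscr F^\bullet)\bigr)$, where the inner $\RHom_X$ is taken in the equivariant sense and lands in $\derived(\Rep_{\BC}(G))$; this bypasses the quasi-coherence issue for internal Hom. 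Either way, the heart of the matter---exactness of $(-)^G$---is exactly what you isolated.
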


The Hom-set on the left hand side is taken in $\derived(\QCoh_{\mathscr X})$, the Hom-set on the right hand side is taken in $\derived(\QCoh_X)$.

\begin{remark}
If $G$ is reductive, then by Lemma \ref{lemma:G-inv_Ext_groups} an extension class $\alpha$ is $G$-equivariant if and only if it is $G$-invariant.
\end{remark}

\subsection{Equivariant Grothendieck duality}\label{sec:verdier}

Classically, one says that Grothendieck duality holds for a morphism of schemes $f$ if the right derived functor $\RR f_\ast$ has a right adjoint. 
Such adjoint is usually denoted $f^{\times}$, or $f^!$ if $f$ is a proper morphism. We will stick to the $f^{!}$ notation.

The most general statement we are aware of is due to Neeman. Note that this is stated for $\derived_{\qcoh}$ in \cite{Neeman2}, but (as we observed in \S\,\ref{sec:perfect_complexes_on_schemes}) with our assumptions these categories are equivalent to $\derived(\QCoh)$.

\begin{theorem}[Grothendieck duality {\cite{Neeman2}}]\label{thm:classical_Grothendieck}
Let $f\colon X\to Y$ be a morphism of quasi-compact separated schemes. Then $\RR f_\ast \colon \derived(\QCoh_X) \to \derived(\QCoh_Y)$ has a right adjoint $f^!$.
If $f$ is a proper morphism of noetherian separated schemes, the natural morphism
\begin{equation}\label{iso_sheafy_VD}
\RR f_\ast \RRlHom_X(\mathscr F^\bullet,f^!\mathscr E^\bullet) \to \RRlHom_Y(\RR f_\ast \mathscr F^\bullet,\mathscr E^\bullet)
\end{equation}
is an isomorphism in $\derived(Y)$ for all $\mathscr F^\bullet \in \derived(\QCoh_X)$ and $\mathscr E^\bullet \in \derived(\QCoh_Y)$.
\end{theorem}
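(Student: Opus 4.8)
The plan is to establish the two assertions separately, following Neeman \cite{Neeman2}. For the existence of $f^!$, I would invoke Neeman's triangulated Brown representability theorem: a coproduct-preserving triangulated functor out of a compactly generated triangulated category, landing in a triangulated category with small coproducts, admits a right adjoint. Here the functor is $\RR f_*\colon\derived(\QCoh_X)\to\derived(\QCoh_Y)$, and two inputs are needed. The first is that $\derived(\QCoh_X)\simeq\derived_{\qcoh}(X)$ is compactly generated, with the perfect complexes as compact objects --- this is the theorem of B\"okstedt and Neeman for $X$ quasi-compact and separated, proved by gluing ample families of line bundles on affine opens via a Mayer--Vietoris argument in the spirit of Thomason--Trobaugh. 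The second is that $\RR f_*$ preserves small coproducts, which holds because $f$ is quasi-compact and quasi-separated (having finite cohomological dimension on affine opens lets one commute $\RR f_*$ past the coproduct). Granting these, Brown representability produces the right adjoint, which I would denote $f^!$; I would also record the compatibility $(g\circ f)^!\cong f^!\circ g^!$, which falls out of $\RR(gf)_*\cong\RR g_*\RR f_*$ and uniqueness of adjoints.

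For the sheafy isomorphism, I would first make the ``natural morphism'' \eqref{iso_sheafy_VD} explicit: from the evaluation map $\RRlHom_X(\mathscr F^\bullet,f^!\mathscr E^\bullet)\otimes^{\LL}\mathscr F^\bullet\to f^!\mathscr E^\bullet$, the adjunction counit $\RR f_*f^!\mathscr E^\bullet\to\mathscr E^\bullet$, and the lax monoidality map $\RR f_*A\otimes^{\LL}\RR f_*B\to\RR f_*(A\otimes^{\LL}B)$ (itself assembled from the counits of $\LL f^*\dashv\RR f_*$), one builds a morphism $\RR f_*\RRlHom_X(\mathscr F^\bullet,f^!\mathscr E^\bullet)\otimes^{\LL}\RR f_*\mathscr F^\bullet\to\mathscr E^\bullet$, and \eqref{iso_sheafy_VD} is its transpose under the $\otimes^{\LL}$--$\RRlHom_Y$ adjunction. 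To prove this morphism is an isomorphism, I would test it against an arbitrary $\mathscr G^\bullet\in\derived(\QCoh_Y)$ and run the chain of natural bijections
\begin{align*}
\Hom_Y(\mathscr G^\bullet,\RR f_*\RRlHom_X(\mathscr F^\bullet,f^!\mathscr E^\bullet))
&\cong\Hom_X(\LL f^*\mathscr G^\bullet,\RRlHom_X(\mathscr F^\bullet,f^!\mathscr E^\bullet))\\
&\cong\Hom_X(\LL f^*\mathscr G^\bullet\otimes^{\LL}\mathscr F^\bullet,f^!\mathscr E^\bullet)\\
&\cong\Hom_Y(\RR f_*(\LL f^*\mathscr G^\bullet\otimes^{\LL}\mathscr F^\bullet),\mathscr E^\bullet)\\
&\cong\Hom_Y(\mathscr G^\bullet\otimes^{\LL}\RR f_*\mathscr F^\bullet,\mathscr E^\bullet)\\
&\cong\Hom_Y(\mathscr G^\bullet,\RRlHom_Y(\RR f_*\mathscr F^\bullet,\mathscr E^\bullet)),
\end{align*}
which uses, in order, the adjunction $\LL f^*\dashv\RR f_*$, the $\otimes^{\LL}$--$\RRlHom_X$ adjunction, the adjunction $\RR f_*\dashv f^!$, the projection formula, and the $\otimes^{\LL}$--$\RRlHom_Y$ adjunction. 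Once one checks that this composite of bijections is the one induced by \eqref{iso_sheafy_VD}, the Yoneda lemma in $\derived(\QCoh_Y)$ forces \eqref{iso_sheafy_VD} to be an isomorphism. The hypotheses that $f$ be proper and the schemes noetherian would enter only to keep $\RR f_*\mathscr F^\bullet$ pseudo-coherent, so that $\RRlHom_Y(\RR f_*\mathscr F^\bullet,\mathscr E^\bullet)$ --- and hence both sides of \eqref{iso_sheafy_VD} --- genuinely lie in $\derived(\QCoh_Y)$, where the projection formula and the adjunctions above are available.

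I expect the main obstacle to lie wholly in the first half, and within it in the compact generation of $\derived_{\qcoh}(X)$: this is a deep theorem that I would cite rather than reprove, after which the existence of $f^!$ is a one-line appeal to Brown representability. The sheafy isomorphism is by contrast essentially formal once $f^!$ is available; the only genuinely delicate point there is the bookkeeping identifying the canonical map \eqref{iso_sheafy_VD} with the transpose of the displayed chain of adjunctions, together with the mild finiteness input needed to stay inside $\derived(\QCoh)$.
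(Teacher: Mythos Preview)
Your proposal is correct and follows exactly the route the paper points to: the paper's own proof is simply a citation to Neeman's \cite{Neeman2} (Example~4.2 for the existence of $f^!$ via Brown representability, and \S\,6 for the sheafified isomorphism), and your sketch is an accurate outline of Neeman's argument there.

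One small imprecision worth flagging: your explanation of where properness and noetherianity enter is not quite right. For arbitrary $\mathscr F^\bullet\in\derived(\QCoh_X)$ the pushforward $\RR f_\ast\mathscr F^\bullet$ will not be pseudo-coherent even when $f$ is proper, so that is not the mechanism. In Neeman's treatment the projection formula already holds for quasi-compact separated $f$ without properness, and your Yoneda chain therefore gives the isomorphism in $\derived_{\qcoh}(Y)$ in that generality; the extra hypotheses are used to upgrade this to an isomorphism in the larger category $\derived(Y)$ (which is how the theorem is stated), where neither side is a priori quasi-coherent and one needs finiteness to control the $\RRlHom$'s. This does not affect the correctness of your overall strategy.
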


\begin{proof}
The first assertion is \cite[Ex.~4.2]{Neeman2}. The sheafified Grothendieck duality isomorphism \eqref{iso_sheafy_VD} is obtained in \cite[\S\,6]{Neeman2}.
A proof of \eqref{iso_sheafy_VD} assuming $f$ is a morphism essentially of finite type between noetherian separated  schemes can be found in \cite[Eq.~1.6.1]{MR3334894}.
\end{proof}

We refer the reader to Neeman \cite{Neeman2} and Lipman \cite{Lipman1} for very informative discussions around the 
history of Grothendieck duality, as well its more modern versions.

\smallbreak
In this section we prove a $G$-equivariant version of Theorem \ref{thm:classical_Grothendieck}. We follow Neeman's strategy entirely.
See also \cite[Thm.~4.14\,(1)]{Hall_Rydh_1} for a generalisation, proving the existence of a right adjoint of $\RR h_\ast\colon \derived_{\qcoh}(\mathscr X) \to \derived_{\qcoh}(\mathscr Y)$ for $h\colon \mathscr X \to \mathscr Y$ an arbitrary concentrated morphism (cf.~\cite[Def.~2.4]{Hall_Rydh_1}) of algebraic stacks.

The main tool used by Neeman is the following version of Brown's representability theorem.

\begin{theorem}[Brown representability~{\cite[Thm.~4.1]{Neeman2}}]\label{thm:Brown_Rep}
Let $\mathcal S$ be a compactly generated triangulated category, $\mathcal T$ any triangulated category. Let $F\colon \mathcal S \to \mathcal T$ be a triangulated functor respecting coproducts. Then $F$ has a right adjoint.
\end{theorem}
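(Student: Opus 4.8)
The plan is to deduce the existence of the right adjoint from the \emph{representability} form of Brown's theorem, where the real content lies, by a short formal argument. The statement I would first isolate is: on a compactly generated triangulated category $\mathcal S$, every cohomological functor $H\colon\mathcal S^{\op}\to\mathsf{Ab}$ that sends coproducts in $\mathcal S$ to products of abelian groups is representable. Granting this, the reduction is immediate. Fix $t\in\mathcal T$ and put $H_t(s)=\Hom_{\mathcal T}(Fs,t)$. Since $F$ is triangulated it carries distinguished triangles to distinguished triangles, so $H_t$ is cohomological; since $F$ respects coproducts and $\Hom_{\mathcal T}(-,t)$ turns coproducts into products, $H_t$ sends coproducts to products. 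Hence $H_t\cong\Hom_{\mathcal S}(-,Gt)$ for some $Gt\in\mathcal S$, naturally in $s$; by the Yoneda lemma this is functorial in $t$, defines a functor $G\colon\mathcal T\to\mathcal S$, and the natural isomorphism $\Hom_{\mathcal S}(-,Gt)\simto\Hom_{\mathcal T}(F(-),t)$ is exactly an adjunction $F\dashv G$, with unit and counit read off from the universal elements. The fact that $\mathcal T$ is an arbitrary triangulated category is harmless here: $F$ enters only through $H_t$, and the whole construction takes place inside $\mathcal S$.

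The heart of the matter is therefore the representability statement, which I would prove following Neeman's set-theoretically controlled construction \cite{Neeman2}. Choose a set $\mathcal G$ of compact generators of $\mathcal S$, closed under shifts. Given $H$, build a sequence $X_0\to X_1\to X_2\to\cdots$ in $\mathcal S$ together with compatible elements $x_i\in H(X_i)$: take $X_0=\coprod_{c\in\mathcal G}\coprod_{\eta\in H(c)}c$ and let $x_0\in H(X_0)=\prod_{c\in\mathcal G}\prod_{\eta\in H(c)}H(c)$ have $(c,\eta)$-component $\eta$; given $(X_i,x_i)$, let $K_i$ be the set of pairs $(c,f)$ with $c\in\mathcal G$, $f\colon c\to X_i$ and $H(f)(x_i)=0$, form the triangle $\coprod_{(c,f)\in K_i}c\xrightarrow{(f)}X_i\to X_{i+1}\to$, and choose a lift of $x_i$ to $x_{i+1}\in H(X_{i+1})$ (possible because the image of $x_i$ in $\prod_{K_i}H(c)$ vanishes by construction). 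Let $X=\operatorname{hocolim}_iX_i$, defined by the triangle $\coprod_iX_i\xrightarrow{1-\mathrm{shift}}\coprod_iX_i\to X\to$; applying $H$ and using that it sends these coproducts to products gives a Milnor exact sequence exhibiting a surjection $H(X)\onto\varprojlim_iH(X_i)$, so the system $(x_i)$ lifts to some $x\in H(X)$. The natural transformation $\phi\colon\Hom_{\mathcal S}(-,X)\to H$, $f\mapsto H(f)(x)$, is then an isomorphism on every $c\in\mathcal G$: it is surjective because each $\eta\in H(c)$ comes from the composite $c\to X_0\to X$; it is injective because, $c$ being compact, any $f\colon c\to X$ factors through some $X_i$ as $f_i$ with $H(f_i)(x_i)=H(f)(x)$, so if this vanishes then $(c,f_i)\in K_i$, forcing $c\xrightarrow{f_i}X_i\to X_{i+1}$ to be zero and hence $f=0$ in $\Hom_{\mathcal S}(c,X)=\colim_i\Hom_{\mathcal S}(c,X_i)$. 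Finally, the full subcategory of objects $s$ for which $\phi(s)$ is an isomorphism is localizing (apply the five lemma to the two long exact sequences, using that $\Hom_{\mathcal S}(-,X)$ and $H$ are cohomological and send coproducts to products) and contains $\mathcal G$, hence is all of $\mathcal S$; thus $\phi$ is a natural isomorphism and $H$ is representable.

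I expect the two genuinely delicate points to be: (i) the lifting of the compatible system $(x_i)$ to a single $x\in H(X)$ --- the $\varprojlim^1$/Milnor step --- which works only because $H$ converts the coproduct appearing in the homotopy-colimit triangle into a product; and (ii) upgrading ``$\phi$ is an isomorphism on the compact generators'' to ``$\phi$ is an isomorphism on all of $\mathcal S$'', which relies on the nontrivial fact, special to compactly generated categories, that a localizing subcategory containing a set of compact generators exhausts $\mathcal S$. A secondary technicality is keeping the construction within sets, which it does, since $\mathcal G$ is a set and each $\Hom_{\mathcal S}(c,X_i)$ is a set, so each $K_i$ is a set. All of this is carried out in \cite{Neeman2}; for the present paper the only thing that needs verifying is that the categories to which the theorem is applied, namely $\derived(\QCoh_X^G)$, are compactly generated, which is precisely Proposition \ref{prop:compact_generation} together with Corollary \ref{cor:derived_X^GcompactlyGenerated}.
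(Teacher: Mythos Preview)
The paper does not prove this statement: it is quoted verbatim from Neeman \cite[Thm.~4.1]{Neeman2} and used as a black box, with the only local work being the verification that $\derived(\QCoh_X^G)$ is compactly generated (Proposition \ref{prop:compact_generation} and Corollary \ref{cor:derived_X^GcompactlyGenerated}). Your proposal goes well beyond what the paper does, supplying an accurate sketch of Neeman's actual argument --- the reduction to representability of cohomological functors sending coproducts to products, the inductive tower $X_0\to X_1\to\cdots$ killing kernels over a set of compact generators, the homotopy colimit and the Milnor $\varprojlim^1$ step to lift the compatible system, and the localizing-subcategory argument to pass from generators to all of $\mathcal S$. The sketch is correct and faithful to \cite{Neeman2}; you have also correctly identified the two genuinely delicate points. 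In short, there is nothing to compare: you have written a proof where the paper has only a citation, and your final sentence already states exactly how the theorem is used here.
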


\begin{theorem}[Equivariant Grothendieck duality]\label{thm:equivariant_Grothendieck}
Let $f\colon X\to Y$ be a morphism of noetherian separated $G$-schemes satisfying $(\dagger)$.
Then $\RR f_\ast\colon \derived(\QCoh_X^G) \to \derived(\QCoh_Y^G)$ has a right adjoint $f^!$.
\end{theorem}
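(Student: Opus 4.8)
The plan is to follow Neeman's strategy, as announced: apply Brown representability (Theorem \ref{thm:Brown_Rep}) to the functor $\RR f_\ast$. The hypotheses we must check are that the source category $\derived(\QCoh_X^G)$ is compactly generated, and that $\RR f_\ast$ is a triangulated functor respecting (small) coproducts. The first of these is exactly Corollary \ref{cor:derived_X^GcompactlyGenerated} together with the standing assumption $(\dagger)$ on $X$: indeed $X$ is a noetherian $\BC$-scheme acted on by an affine algebraic group and satisfying $(\dagger)$, so $\derived(\QCoh_X^G)\simeq \derived_{\qcoh}([X/G])$ is compactly generated. That $\RR f_\ast$ is triangulated (exact) is standard for derived functors constructed via K-injective resolutions, as in Lemma \ref{lemma:Grothendieck_cat} and the subsequent proposition. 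Granting these two points, Theorem \ref{thm:Brown_Rep} immediately produces the desired right adjoint $f^!$.

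So the one substantive step — and the step I expect to be the main obstacle — is verifying that $\RR f_\ast\colon \derived(\QCoh_X^G)\to\derived(\QCoh_Y^G)$ commutes with arbitrary small coproducts. The cleanest route is to transport the question through the forgetful functor. By property \eqref{commute_with_Phi} of the equivariant derived functors, $\Phi_Y\circ\RR f_\ast = \RR f_\ast\circ\Phi_X$, and one knows the non-equivariant $\RR f_\ast\colon\derived(\QCoh_X)\to\derived(\QCoh_Y)$ commutes with coproducts precisely because $f$ is a quasi-compact separated (indeed here also the schemes are noetherian) morphism — this is the classical input underlying Theorem \ref{thm:classical_Grothendieck}, originally due to Neeman. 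The remaining point is that $\Phi_Y$ detects coproducts: a morphism in $\derived(\QCoh_Y^G)$ is an isomorphism if and only if its image under $\Phi_Y$ is, and $\Phi_Y$ preserves coproducts. The former holds because $\Phi_Y=\mathsf p^\ast$ for the faithfully flat atlas $\mathsf p\colon Y\to[Y/G]$ under the identifications \eqref{factorisation_Phi}, and the latter because $\mathsf p^\ast=\LL\mathsf p^\ast$ is a left adjoint. Combining: for a small family $(\mathscr F_i^\bullet)$ in $\derived(\QCoh_X^G)$, the canonical map $\coprod \RR f_\ast \mathscr F_i^\bullet \to \RR f_\ast(\coprod \mathscr F_i^\bullet)$ becomes, after applying $\Phi_Y$, the canonical map $\coprod \RR f_\ast \Phi_X\mathscr F_i^\bullet \to \RR f_\ast(\coprod \Phi_X\mathscr F_i^\bullet)$, which is an isomorphism in $\derived(\QCoh_Y)$; hence it was already an isomorphism.

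One technical wrinkle to be careful about: to invoke property \eqref{commute_with_Phi} and to have the functors $\RR f_\ast$ defined on all of $\derived(\QCoh_X^G)$ (not merely bounded-below complexes), one uses the existence of K-injective resolutions in $\QCoh_X^G$ from Lemma \ref{lemma:Grothendieck_cat}; this is what makes the unbounded formalism work and is the reason the section set things up that way. With that in place, the argument is short: compact generation of the source plus coproduct-preservation of $\RR f_\ast$, both already available, feed directly into Brown representability to give $f^!$. I would also remark, in passing, that this recovers the scheme-theoretic Theorem \ref{thm:classical_Grothendieck} upon taking $G$ trivial, and is compatible with the stacky statement \cite[Thm.~4.14\,(1)]{Hall_Rydh_1} via the equivalences \eqref{equivalences}.
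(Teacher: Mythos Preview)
Your proof is correct and follows the same overall strategy as the paper: verify the hypotheses of Brown representability (Theorem \ref{thm:Brown_Rep}) by establishing compact generation of the source (via Corollary \ref{cor:derived_X^GcompactlyGenerated}) and coproduct preservation of $\RR f_\ast$. The only difference lies in how coproduct preservation is checked. The paper passes to the induced representable morphism $\overline f\colon [X/G]\to [Y/G]$, observes it is quasi-compact and quasi-separated, hence concentrated by \cite[Lemma~2.5]{Hall_Rydh_1}, and then invokes \cite[Thm.~2.6\,(3)]{Hall_Rydh_1} to conclude that $\RR\overline f_\ast$ preserves coproducts; under the equivalences \eqref{equivalences} this is the desired statement. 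You instead reduce to the non-equivariant case via the forgetful functor: $\Phi_Y$ is conservative (pullback along the faithfully flat atlas) and preserves coproducts (as a left adjoint), so coproduct preservation of the equivariant $\RR f_\ast$ follows from that of the ordinary $\RR f_\ast$ together with property \eqref{commute_with_Phi}. Your argument is slightly more elementary in that it avoids the Hall--Rydh machinery of concentrated morphisms, leaning only on the classical scheme-theoretic result; the paper's approach, on the other hand, fits more cleanly into the general stacky framework and makes the relationship with \cite[Thm.~4.14\,(1)]{Hall_Rydh_1} transparent.
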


\begin{proof}
Recall that $\derived(\QCoh_X^G)$ is compactly generated by Corollary \ref{cor:derived_X^GcompactlyGenerated}.
Set $\mathscr X = [X/G]$ and $\mathscr Y = [Y/G]$. 
The morphism $f\colon X \to Y$ induces a \emph{representable} morphism of algebraic stacks
\[
\overline f\colon \mathscr X \to \mathscr Y,
\]
that by our assumptions on $X$ and $Y$ is quasi-compact and quasi-separated.
In particular, by \cite[Lemma~2.5]{Hall_Rydh_1}, $\overline f$ is a  \emph{concentrated} morphism.
Then, by \cite[Thm.~2.6\,(3)]{Hall_Rydh_1}, the direct image $\RR \overline{f}_{\ast}\colon \derived_{\qcoh}(\mathscr X)\to \derived_{\qcoh}(\mathscr Y)$ preserves coproducts. Under the equivalences \eqref{equivalences}, the functor $\RR \overline{f}_{\ast}$ corresponds precisely to $\RR f_\ast\colon \derived(\QCoh_X^G) \to \derived(\QCoh_Y^G)$.
Thus the existence of $f^!\colon \derived(\QCoh_Y^G) \to \derived(\QCoh_X^G)$ follows by Theorem \ref{thm:Brown_Rep}.
\end{proof}

\begin{lemma}[Sheafified Grothendieck duality]
Let $f\colon X\to Y$ be a proper morphism of noetherian separated $G$-schemes satisfying $(\dagger)$.
Fix objects $\mathscr F^\bullet \in \Perf^G(X)$ and $\mathscr E^\bullet \in \derived(\QCoh_Y^G)$. Then there is a natural isomorphism
\begin{equation}\label{sheafy_VD}
\RR f_{\ast}\RRlHom_{X}(\mathscr F^{\bullet},f^{!}\mathscr E^{\bullet}) \,\simto\, \RRlHom_{Y}(\RR f_{\ast}\mathscr F^{\bullet},\mathscr E^{\bullet})
\end{equation}
in $\derived(\QCoh_Y^G)$.
\end{lemma}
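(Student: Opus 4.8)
The plan is to deduce the $G$-equivariant sheafified duality isomorphism \eqref{sheafy_VD} from the non-equivariant statement \eqref{iso_sheafy_VD} by passing through the forgetful functor $\Phi$ and invoking that $\Phi$ reflects isomorphisms together with the compatibilities already established. Concretely, both sides of \eqref{sheafy_VD} are objects of $\derived(\QCoh_Y^G)$, and I have a natural candidate morphism between them: the right adjoint $f^!$ of $\RR f_\ast\colon \derived(\QCoh_Y^G)\to\derived(\QCoh_X^G)$ exists by Theorem \ref{thm:equivariant_Grothendieck}, the counit of this adjunction gives $\RR f_\ast f^!\mathscr E^\bullet\to\mathscr E^\bullet$, and combining it with the obvious evaluation maps exactly as in Neeman's construction produces a canonical arrow in $\derived(\QCoh_Y^G)$
\[
\RR f_{\ast}\RRlHom_{X}(\mathscr F^{\bullet},f^{!}\mathscr E^{\bullet}) \longrightarrow \RRlHom_{Y}(\RR f_{\ast}\mathscr F^{\bullet},\mathscr E^{\bullet}).
\]
The strategy is then to show this arrow becomes an isomorphism after applying $\Phi_Y$, and conclude it was an isomorphism already in $\derived(\QCoh_Y^G)$.

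The first key step is to check that $\Phi_Y$ carries the equivariant candidate morphism to the non-equivariant one of \eqref{iso_sheafy_VD}. This requires two things: first, that $\Phi$ commutes with all functors involved — $\RR f_\ast$, $\RRlHom_X(-,-)$ (in the first entry restricted to $\Perf^G$, as we need), and $\RR f_\ast$ again — which is precisely item \eqref{commute_with_Phi} in the list of compatibilities of the equivariant derived functors; and second, that $\Phi_X(f^!\mathscr E^\bullet)$ is naturally identified with $(\Phi_X f)^! \Phi_Y(\mathscr E^\bullet)$, i.e.\ that the equivariant and non-equivariant twisted pullbacks agree under the forgetful functor. The latter is the compatibility of right adjoints: since $\RR f_\ast$ commutes with $\Phi$ by \eqref{commute_with_Phi}, and $\Phi$ itself has a right adjoint (coming from $\mathsf p_\ast$ on the atlas, or equivalently because $\Phi$ is the pullback along the smooth atlas $\mathsf p\colon X\to[X/G]$, which has a right adjoint by \cite[Thm.~4.14]{Hall_Rydh_1} applied to the concentrated morphism $\mathsf p$), a routine diagram chase with the two adjunctions shows $\Phi_X\circ f^!\cong (\Phi_X f)^!\circ\Phi_Y$ as functors, compatibly with counits. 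Granting this, $\Phi_Y$ applied to the equivariant candidate is exactly Neeman's map \eqref{iso_sheafy_VD} for the underlying morphism of schemes $\Phi_X f\colon X\to Y$, with $\Phi_X(\mathscr F^\bullet)\in\Perf X$ (note $\Phi$ preserves perfection, by \eqref{factorisation_Phi} since $\mathsf p^\ast$ preserves perfect complexes) and $\Phi_Y(\mathscr E^\bullet)\in\derived(\QCoh_Y)$; by Theorem \ref{thm:classical_Grothendieck} it is an isomorphism.

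The final step is the descent of the isomorphism property along $\Phi_Y$. Here the point is that $\Phi_Y\colon\derived(\QCoh_Y^G)\to\derived(\QCoh_Y)$ is conservative: a morphism in $\derived(\QCoh_Y^G)$ whose image under $\Phi_Y$ is an isomorphism is itself an isomorphism. This follows because, under the equivalence $\derived(\QCoh_Y^G)\simeq\derived_{\qcoh}([Y/G])$ of \eqref{equivalences}, $\Phi_Y$ is the pullback $\mathsf p^\ast$ along the smooth surjective atlas $\mathsf p\colon Y\to[Y/G]$ — as recorded in \eqref{factorisation_Phi} — and pullback along a faithfully flat (indeed smooth surjective) morphism is conservative on the quasi-coherent derived category, since whether the cone of a morphism is acyclic can be tested after such a pullback. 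I expect this conservativity/descent step to be the only genuine subtlety: one must be slightly careful that it is formulated for $\derived(\QCoh)$ rather than $\derived_{\qcoh}$, but this is harmless given the equivalences \eqref{equivalences} in force under $(\dagger)$. With conservativity in hand, the equivariant candidate morphism is an isomorphism in $\derived(\QCoh_Y^G)$, which is the claim. (An alternative to the explicit candidate-morphism approach, which sidesteps writing down the map, is to run Neeman's proof of \eqref{iso_sheafy_VD} verbatim inside $\derived(\QCoh_X^G)$: the ingredients — compact generation of $\derived(\QCoh_X^G)$ by Corollary \ref{cor:derived_X^GcompactlyGenerated}, existence of $f^!$ by Theorem \ref{thm:equivariant_Grothendieck}, the projection formula and the behaviour of $\RRlHom$ on perfect complexes — are all available equivariantly; I would mention this as the reason the statement holds but carry out the shorter reduction above.)
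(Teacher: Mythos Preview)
Your main strategy has a genuine gap at the step where you claim that ``a routine diagram chase with the two adjunctions shows $\Phi_X\circ f^!\cong f^!\circ\Phi_Y$ as functors, compatibly with counits.'' This is not routine: from the compatibility $\Phi_Y\circ\RR f_\ast\cong\RR f_\ast\circ\Phi_X$ one only gets a \emph{Beck--Chevalley transformation} $\Phi_X\circ f^!\to f^!\circ\Phi_Y$, and whether this mate is an isomorphism is precisely a base-change question for $f^!$ along the atlas $\mathsf p$. The paper treats this question \emph{after} the present lemma (see the discussion around \eqref{eqn:for}), citing an external result of Neeman, and even then only establishes the isomorphism under the additional hypothesis $\mathscr E^\bullet\in\derived_{\qcoh}^+$. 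So your reduction, as written, would at best prove the lemma for bounded-below $\mathscr E^\bullet$, and only after importing that base-change result --- which is circular relative to the paper's order of exposition.

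By contrast, the paper's own proof is a one-line citation: the statement is a special case of a general lemma of Neeman that establishes sheafified duality in an abstract compactly-generated setting, applied here to $\derived(\QCoh_X^G)\simeq\derived_{\qcoh}([X/G])$. This is essentially your parenthetical ``alternative'' at the end --- running Neeman's argument verbatim in the equivariant category, using compact generation (Corollary \ref{cor:derived_X^GcompactlyGenerated}), the existence of $f^!$ (Theorem \ref{thm:equivariant_Grothendieck}), and the projection formula. That route avoids the $f^!$ base-change issue entirely, and is what you should promote to the main argument.
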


\begin{proof}
This is a special case of \cite[Lemma 5.3]{Neeman0}.
\end{proof}

One can ask whether the right adjoint $f^!\colon \derived(\QCoh_Y^G) \to \derived(\QCoh_X^G)$ commutes with the forgetful functor.
This question can be restated as follows. Given the $2$-cartesian diagram
\[
\begin{tikzcd}[column sep=large,row sep=large]
X\MySymb{dr}\arrow{r}{f}\arrow[swap]{d}{\mathsf p_X} & Y\arrow{d}{\mathsf p_Y} \\
\mathscr X \arrow{r}{\overline f}  & \mathscr Y
\end{tikzcd}
\]
we ask whether the natural transformation $\eta\colon \mathsf p_X^\ast \overline f^! \to f^! \mathsf p_Y^\ast$ an isomorphism of functors. 
This is answered in full generality in \cite[Lemma 5.20]{Neeman0}.
For the purpose of this paper, we content ourselves with a special case of that result: the answer is positive, i.e.~$\eta_{\mathscr E^\bullet}$ is an isomorphism, when $f$ is proper 
and $\mathscr E^\bullet \in \derived_{\qcoh}^+(\mathscr Y)$ is bounded below. Under these assumptions one has
\begin{equation}\label{eqn:for}
\Phi_X(f^!(\mathscr E^\bullet)) = f^!(\Phi_Y(\mathscr E^\bullet))
\end{equation}
in $\derived(\QCoh_X)$, where $f^!$ in the left hand side (resp.~in the right hand side) is the $G$-equivariant right adjoint (resp.~the ordinary right adjoint) of $\RR f_\ast$.

\begin{remark}
In a little more detail, properness of $f$ implies properness of $\overline f$ (reason: $\overline f$ is separated by \cite[\href{https://stacks.math.columbia.edu/tag/04YV}{Tag 04YV}]{stacks-project}, universally closed by \cite[\href{https://stacks.math.columbia.edu/tag/0CL3}{Tag 0CL3}]{stacks-project} and of finite type by \cite[\href{https://stacks.math.columbia.edu/tag/06FR}{Tag 06FR}]{stacks-project}); since $\mathscr X$ and $\mathscr Y$ are noetherian and $\overline f$ is representable, it follows that $\overline f$ is \emph{quasi-proper}, which together with $\mathscr E^\bullet \in \derived_{\qcoh}^+(\mathscr Y)$ is one of the conditions in \cite[Lemma 5.20]{Neeman0} ensuring that  $\eta_{\mathscr E^\bullet}$ is an isomorphism.

Note that restricting attention to bounded below complexes does not affect the applications we have in mind, which involve perfect complexes: we already observed in \S\,\ref{sec:perfect_complexes_on_schemes} that by assumption $(\dagger)$ perfect complexes are \emph{bounded}.
\end{remark}

\begin{corollary}\label{cor:Equivariant_Grothendieck_Global_sections}
Let $f\colon X\to Y$ be a proper morphism of noetherian separated $G$-schemes satisfying $(\dagger)$. 
Given $\mathscr F^\bullet \in \Perf^G(X)$ and $\mathscr E^\bullet \in \derived_{\qcoh}^+(\mathscr Y)\subset  \derived(\QCoh_Y^G)$, 
for all $k \in \BZ$ there is a canonical isomorphism of $G$-representations
\[
\Ext^k_X(\mathscr F^{\bullet},f^{!}\mathscr E^{\bullet}) \,\simto\, \Ext^k_Y(\RR f_{\ast}\mathscr F^{\bullet},\mathscr E^{\bullet}).
\]
Taking $G$-invariant parts, it restricts to an isomorphism of $\BC$-vector spaces
\[
\Ext^k_X(\mathscr F^{\bullet},f^{!}\mathscr E^{\bullet})^G \,\simto\, \Ext^k_Y(\RR f_{\ast}\mathscr F^{\bullet},\mathscr E^{\bullet})^G.
\]
\end{corollary}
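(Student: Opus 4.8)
The plan is to deduce this corollary directly from the sheafified equivariant Grothendieck duality isomorphism \eqref{sheafy_VD} by applying the equivariant global sections functor and then taking cohomology. First I would apply $\RR\Gamma_Y = \RR\pi_{Y\ast}$, where $\pi_Y\colon Y \to \Spec\BC$ is the structure morphism (which is $G$-equivariant for the trivial action on a point), to both sides of \eqref{sheafy_VD}. Since $\pi_Y \circ f = \pi_X$, the left-hand side becomes $\RR\Gamma_X\RRlHom_X(\mathscr F^\bullet, f^!\mathscr E^\bullet)$, while the right-hand side becomes $\RR\Gamma_Y\RRlHom_Y(\RR f_\ast\mathscr F^\bullet,\mathscr E^\bullet)$; both are objects of $\derived(\QCoh_{\pt}^G) = \derived(\Rep_\BC(G))$. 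Applying the cohomology functors $\HH^k$, which by Remark \ref{rmk:G-structure_on_cohomology} land in $\QCoh_{\pt}^G = \Rep_\BC(G)$, yields by the very definition \eqref{ext} the claimed isomorphism of $G$-representations
\[
\Ext^k_X(\mathscr F^\bullet,f^!\mathscr E^\bullet) \,\simto\, \Ext^k_Y(\RR f_\ast\mathscr F^\bullet,\mathscr E^\bullet).
\]
One small point to check is that $\RR f_\ast\mathscr F^\bullet$ still lies in $\Perf^G(Y)$ (or at least in a category where the Ext groups on the right are defined as $G$-representations): since $f$ is proper and $\mathscr F^\bullet$ is perfect on $X$, $\RR f_\ast\mathscr F^\bullet$ is perfect on $Y$, so this is fine, and in any case the second entry of $\RRlHom_Y$ only needs to lie in $\derived(\QCoh_Y^G)$.

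For the second assertion, I would apply the $G$-invariants functor $(-)^G\colon \QCoh_{\pt}^G \to \QCoh_{\pt}$ to the isomorphism of $G$-representations just obtained. Since an isomorphism of $G$-representations restricts to an isomorphism on $G$-invariant subspaces, this immediately gives
\[
\Ext^k_X(\mathscr F^\bullet,f^!\mathscr E^\bullet)^G \,\simto\, \Ext^k_Y(\RR f_\ast\mathscr F^\bullet,\mathscr E^\bullet)^G,
\]
which is the stated isomorphism of $\BC$-vector spaces. (Reductivity of $G$ is not even needed here, only for the later identification of $G$-invariant extensions with honest morphisms in the equivariant derived category via Lemma \ref{lemma:G-inv_Ext_groups}.)

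The only genuine subtlety — and the step I would treat most carefully — is verifying that this whole construction is compatible with the forgetful functor $\Phi$, so that the $G$-representation isomorphism above really has the classical Grothendieck duality isomorphism of $\BC$-vector spaces as its underlying map. This is where the hypothesis $\mathscr E^\bullet \in \derived_{\qcoh}^+(\mathscr Y)$ enters: by \eqref{eqn:for} one has $\Phi_X(f^!\mathscr E^\bullet) = f^!(\Phi_Y\mathscr E^\bullet)$, i.e.\ the equivariant and ordinary right adjoints of $\RR f_\ast$ agree after forgetting; combined with the fact (item \eqref{commute_with_Phi} in \S\,\ref{sec:equiv_der_functors}) that $\RR f_\ast$, $\RRlHom$ and $\RR\Gamma$ all commute with $\Phi$, applying $\Phi_{\pt}$ to the equivariant isomorphism \eqref{sheafy_VD}-after-$\RR\Gamma$ recovers exactly the classical isomorphism of Theorem \ref{thm:classical_Grothendieck} after taking $\HH^k$. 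Hence the isomorphism we produced is canonical and is a lift of the classical one; this justifies calling it ``the'' Grothendieck duality isomorphism and is what makes it usable in step \eqref{item3} of the strategy for Theorem \ref{thm:main2}. Everything else is a formal consequence of the already-established equivariant machinery, so I expect no further obstacles.
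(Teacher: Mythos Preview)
Your proposal is correct and follows essentially the same approach as the paper: apply $\HH^k\circ\RR\Gamma_Y$ to the sheafified duality isomorphism \eqref{sheafy_VD}, and then invoke the compatibility of all functors involved (including $f^!$, via \eqref{eqn:for}) with the forgetful functor $\Phi$. Your treatment is simply a more detailed elaboration of the paper's one-line argument; the only small wrinkle is your aside about ``the second entry of $\RRlHom_Y$'', which is slightly off target since it is the \emph{first} entry $\RR f_\ast\mathscr F^\bullet$ whose perfectness is at stake, but you already addressed that correctly just before.
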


\begin{proof}
It is enough to apply $\HH^k\circ \RR \Gamma_Y$ to the isomorphism \eqref{sheafy_VD} and to observe that all functors involved commute with the forgetful functor. For $f^!$, we exploit  \eqref{eqn:for}. 
\end{proof}

\begin{example}
Keep the assumptions of Corollary \ref{cor:Equivariant_Grothendieck_Global_sections}. If $G$ is reductive, by Lemma \ref{lemma:G-inv_Ext_groups} we have a commutative diagram of isomorphisms
\[
\begin{tikzcd}
\Ext^k_X(\mathscr F^{\bullet},f^{!}\mathscr E^{\bullet})^G\arrow{r}{\sim} & \Ext^k_Y(\RR f_{\ast}\mathscr F^{\bullet},\mathscr E^{\bullet})^G \\
\Hom_{\derived(\QCoh_X^G)}(\mathscr F^{\bullet},f^{!}\mathscr E^{\bullet}[k])\isoarrow{u} \arrow{r}{\sim} & \Hom_{\derived(\QCoh_Y^G)}(\RR f_{\ast}\mathscr F^{\bullet},\mathscr E^{\bullet}[k])\isoarrow{u}
\end{tikzcd}
\]
where the bottom map is the adjunction isomorphism obtained via Theorem \ref{thm:equivariant_Grothendieck}.
\end{example}

\section{Equivariance of the truncated Atiyah class}\label{sec:Equivariance_of_At}

\subsection{Truncated Atiyah classes after Huybrechts--Thomas}
In this section all schemes are noetherian and separated over $\BC$.

\subsubsection{The relative truncated cotangent complex}\label{sec:truncated_cotg}
The goal of this subsection is to revisit the classical fact that the truncated cotangent complex, though defined through the choice of a smooth embedding, does not depend on this choice. We review this from \cite[\S\,2]{HT} since the argument reveals that the same feature occurs in the equivariant setting.

Let $B$ be a scheme. Let $X \subset A_1$ be a closed embedding inside a smooth $B$-scheme $A_1$. Let $J_1 \subset \OO_{A_1}$ be the ideal sheaf of the embedding. Consider the exterior derivative
\[
\dd \colon J_1 \into \OO_{A_1} \to \Omega_{A_1/B}
\]
and restrict it to $X$ to obtain the (relative) \emph{truncated cotangent complex}
\begin{equation}\label{ordinary_L}
\BL_{X/B} \,=\,\bigl[\,J_1/J_1^2 \to \Omega_{A_1/B}|_X\,\bigr] \,\in \,\derived^{[-1,0]}(\QCoh_X).
\end{equation}
Suppose $X$ admits an embedding in another smooth $B$-scheme $A_2$. Then the composition $X \into A_1\times_B A_2 \to A_1$, where $X \into A_1 \times_BA_2$ is the diagonal embedding defined by an ideal $J_{12} \subset \OO_{A_1\times_BA_2}$, induces a quasi-isomorphism of two-term complexes
\begin{equation}
\begin{tikzcd}\label{big_qis}
J_1/J_1^2 \arrow{r}\arrow{d} & \Omega_{A_1/B}|_X \arrow{d} \\
J_{12}/J_{12}^2\arrow{r}\arrow{d} & \Omega_{A_1/B}|_X\oplus \Omega_{A_2/B}|_X \arrow{d} \\
\Omega_{A_2/B}|_X\arrow[equal]{r} &  \Omega_{A_2/B}|_X
\end{tikzcd}
\end{equation}
showing that replacing $X \subset A_1$ with $X \subset A_1\times_BA_2$ does not change the isomorphism class of $\BL_{X/B}$ in the derived category.

\smallbreak
We recalled this argument in order to make the following observation.
Suppose $\iota_i\colon X\into A_i$ is a $G$-\emph{equivariant} closed embedding, for $i = 1,2$. Then 
\[
0 \to J_i \to \OO_{A_i} \to \iota_{i\ast}\OO_X \to 0
\]
is a $G$-equivariant short exact sequence, and similarly for $\iota_{12}\colon X \into A_1\times_BA_2$.
Since the exterior derivative $\dd \colon \OO_{A_i} \to \Omega_{A_i/B}$ is also $G$-equivariant, the whole diagram \eqref{big_qis} can be canonically lifted to $\QCoh_X^G$. This yields a well-defined element
\begin{equation}\label{equivariant_L}
\BL_{X/B}\,\in\,\derived^{[-1,0]}(\QCoh_X^G),
\end{equation}
whose isomorphism class again does not depend on the choice of equivariant embedding. The equivariant truncated cotangent complex is also discussed by Illusie in \cite[Ch.~VII, \S\,2.2.5]{IllusieII}.

Let $L^\bullet_{S/T}$ denote the full (possibly $G$-equivariant) cotangent complex of a (possibly $G$-equivariant) morphism of schemes (or algebraic stacks) $S\to T$.

\begin{lemma}\label{lemma:equivariant_transitivity_triangle}
Let $f\colon X \to Y$ and $g\colon Y \to Z$ be $G$-equivariant morphisms. Then there is a sequence of morphisms
\[
\tau_{\geq-1}\LL f^\ast L_{Y/Z}^\bullet \to \BL_{X/Z} \to \BL_{X/Y} 
\]
in $\derived^{[-1,0]}(\QCoh_X^G)$.
\end{lemma}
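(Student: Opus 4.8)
The plan is to derive the sequence from the transitivity triangle of the (full) cotangent complex, run on the quotient stacks and then carried back along the equivalence $\derived(\QCoh_X^G)\simeq\derived(\QCoh_{[X/G]})$ of Proposition~\ref{theorem:derived(quotient_stack)}. Set $\mathscr X=[X/G]$, $\mathscr Y=[Y/G]$, $\mathscr Z=[Z/G]$; the $G$-equivariant maps $f,g$ induce a composite of (representable) morphisms of algebraic stacks $\mathscr X\xrightarrow{\,\overline f\,}\mathscr Y\xrightarrow{\,\overline g\,}\mathscr Z$, and the transitivity triangle of cotangent complexes (in the lisse-\'etale version over algebraic stacks; cf.\ \cite{Ill,Olsson,IllusieII}) reads
\[
\LL\overline f^\ast L^\bullet_{\mathscr Y/\mathscr Z}\;\longrightarrow\;L^\bullet_{\mathscr X/\mathscr Z}\;\longrightarrow\;L^\bullet_{\mathscr X/\mathscr Y}\;\xrightarrow{+1}.
\]
All three complexes are concentrated in non-positive degrees, so I would apply $\tau_{\geq-1}$; since $\LL\overline f^\ast$ is right $t$-exact one has $\tau_{\geq-1}\LL\overline f^\ast L^\bullet_{\mathscr Y/\mathscr Z}=\tau_{\geq-1}\LL\overline f^\ast\tau_{\geq-1}L^\bullet_{\mathscr Y/\mathscr Z}$, and truncation produces a sequence
\[
\tau_{\geq-1}\LL\overline f^\ast L^\bullet_{\mathscr Y/\mathscr Z}\;\longrightarrow\;\BL_{\mathscr X/\mathscr Z}\;\longrightarrow\;\BL_{\mathscr X/\mathscr Y}
\]
in $\derived^{[-1,0]}(\QCoh_{\mathscr X})$, where $\BL_{\mathscr X/\mathscr Z}:=\tau_{\geq-1}L^\bullet_{\mathscr X/\mathscr Z}$ and likewise for $\BL_{\mathscr X/\mathscr Y}$. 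Since the equivalence of Proposition~\ref{theorem:derived(quotient_stack)} carries $\LL\overline f^\ast$ to the equivariant pullback $\LL f^\ast$ (implicit in the factorisation~\eqref{factorisation_Phi}), transporting this sequence yields one of the desired shape --- provided the three terms are correctly matched with those in the statement, which is the step that needs care.

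The substantive point is therefore the identification, under Proposition~\ref{theorem:derived(quotient_stack)}, of $\tau_{\geq-1}L^\bullet_{\mathscr X/\mathscr Z}$ and $\tau_{\geq-1}L^\bullet_{\mathscr X/\mathscr Y}$ with the equivariant truncated cotangent complexes $\BL_{X/Z}$, $\BL_{X/Y}$ of~\S\,\ref{sec:truncated_cotg}, and of $\LL\overline f^\ast L^\bullet_{\mathscr Y/\mathscr Z}$ with $\LL f^\ast L^\bullet_{Y/Z}$ (the last being essentially the definition of the equivariant full cotangent complex). For the first two I would fix a $G$-equivariant closed embedding $X\into A$ into a smooth $G$-scheme $A$ over $Z$, which exists under the standing hypotheses; concretely one may take $A=C\times_Z B$ for $G$-equivariant embeddings $X\into C$ and $Y\into B$ into smooth $Z$-schemes, so that $A\to B$ is smooth, $A_Y:=A\times_B Y$ is smooth over $Y$, and $X\into A_Y\into A$. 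This induces a closed immersion $[X/G]\into[A/G]$ with $[A/G]$ smooth over $\mathscr Z$, and the usual two-term presentation of the truncated cotangent complex of a closed immersion into a smooth stack (Illusie's formula, valid on the lisse-\'etale site, cf.\ \cite{IllusieII}) gives
\[
\tau_{\geq-1}L^\bullet_{\mathscr X/\mathscr Z}\,=\,\bigl[\,\mathcal I/\mathcal I^2\longrightarrow\Omega_{[A/G]/\mathscr Z}|_{[X/G]}\,\bigr],
\]
with $\mathcal I$ the ideal of $[X/G]$ in $[A/G]$. Pulling back along the atlas $A\to[A/G]$ identifies $\mathcal I$ with the ideal $J$ of $X\into A$ and $\Omega_{[A/G]/\mathscr Z}$ with $\Omega_{A/Z}$, each with its canonical $G$-equivariant structure, so that this object is exactly~\eqref{equivariant_L} for the embedding $X\into A$ over $Z$; the case of $\BL_{X/Y}$ is identical, using $X\into A_Y$ over $Y$. (A clean cross-check: by flat base change of the cotangent complex, the $2$-cartesian square with corners $X,Y,\mathscr X,\mathscr Y$ --- atlas maps vertical, $f,\overline f$ horizontal --- with $Y\to\mathscr Y$ smooth gives $\mathsf p_X^\ast L^\bullet_{\mathscr X/\mathscr Y}=L^\bullet_{X/Y}$, and $\mathsf p_X^\ast$ is the forgetful functor $\Phi_X$.)

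The main obstacle I anticipate is precisely this dictionary: reconciling the intrinsic cotangent complex of the quotient stacks --- and its $\tau_{\geq-1}$-truncation --- with the hands-on equivariant construction of~\S\,\ref{sec:truncated_cotg} (existence of the compatible $G$-equivariant embeddings; the two-term presentation over an algebraic stack; compatibility of all this with pullback along the atlas). Everything else is formal once that is in place. If one prefers to avoid stacks, there is a parallel route copying \cite[\S\,2]{HT} essentially verbatim: with the embeddings $X\into A_Y\into A$ and $Y\into B$ above, the conormal sequence of $X\subset A_Y\subset A$ and the relative cotangent sequence of $A\to B\to Z$, restricted to $X$, are $G$-equivariant short exact sequences (the embeddings and the exterior derivative being $G$-equivariant, as already used in~\S\,\ref{sec:truncated_cotg}); assembling them gives explicit $G$-equivariant cochain representatives of the two morphisms, and applying $\Phi_X$ recovers the non-equivariant transitivity sequence of \cite[\S\,2]{HT}.
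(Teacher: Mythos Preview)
Your approach is essentially identical to the paper's: pass to quotient stacks, invoke the transitivity triangle for the full cotangent complex (the paper cites \cite[Thm.~8.1]{Olsson}), transport along the equivalence of Proposition~\ref{theorem:derived(quotient_stack)}, and then apply $\tau_{\geq -1}$. The only difference is emphasis: the paper handles the identification of $\tau_{\geq-1}L^\bullet_{\mathscr X/\mathscr Z}$ with the object~\eqref{equivariant_L} tersely (via the observation that $\mathsf p_X^\ast$ applied to the stacky triangle returns the ordinary transitivity triangle~\eqref{triangle2}, i.e.\ flat base change for the cotangent complex along the smooth atlas), whereas you spell this dictionary out more explicitly and also sketch an alternative hands-on route through equivariant embeddings that the paper does not pursue.
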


\begin{proof}
Let us shorten $\mathscr X = [X/G]$, $\mathscr Y=[Y/G]$ and $\mathscr Z=[Z/G]$. 
The given $G$-equivariant morphisms induce $2$-cartesian diagrams of algebraic stacks
\[
\begin{tikzcd}[row sep = large,column sep = large]
X\MySymb{dr}\arrow{d}{\mathsf p_X}\arrow{r}{f} & Y\MySymb{dr}\arrow{d}{\mathsf p_Y}\arrow{r}{g} & Z\arrow{d}{\mathsf p_Z} \\
\mathscr X \arrow{r}{\overline f} & \mathscr Y \arrow{r}{\overline g} & \mathscr Z
\end{tikzcd}
\]
where the morphisms $\overline f$ and $\overline g$ are quasi-compact, quasi-separated and of Deligne--Mumford type \cite[\href{https://stacks.math.columbia.edu/tag/04YW}{Tag 04YW}]{stacks-project}. Hence their cotangent complexes live in $\derived^{\leq 0}(\QCoh)$. By \cite[Thm.~8.1]{Olsson}, there is an exact triangle
\begin{equation}\label{triangle}
\LL \overline f^\ast L^\bullet_{\mathscr Y/\mathscr Z} \to L^\bullet_{\mathscr X/\mathscr Z} \to L^\bullet_{\mathscr X/\mathscr Y} 
\end{equation}
in $\derived(\QCoh_{\mathscr X})$, where $L^\bullet$ denotes the full cotangent complex. If we applied the pullback functor
\[
\mathsf p_X^\ast\colon \derived(\QCoh_{\mathscr X}) \,\simto\, \derived(\QCoh_X^G) \xrightarrow{\Phi} \derived(\QCoh_X)
\] 
to the triangle \eqref{triangle}, we would get the usual triangle of full cotangent complexes
\begin{equation}\label{triangle2}
\LL f^\ast L_{Y/Z}^\bullet \to L_{X/Z}^\bullet \to L_{X/Y}^\bullet 
\end{equation}
in $\derived^{\leq 0}(\QCoh_X)$. Instead, we get a lift to $\derived(\QCoh_X^G)$ of the triangle \eqref{triangle2}
by applying the exact equivalence $\derived(\QCoh_{\mathscr X}) \,\simto\, \derived(\QCoh_X^G)$ to \eqref{triangle}. Applying the truncation functor $\tau_{\geq -1}$ on $\derived(\QCoh_X^G)$ yields the desired sequence of morphisms
\[
\tau_{\geq-1}\LL f^\ast L_{Y/Z}^\bullet \to \BL_{X/Z} \to \BL_{X/Y}
\]
in $\derived^{[-1,0]}(\QCoh_X^G)$, as required.
\end{proof}

\subsubsection{Absolute setting}\label{sec:absolute} 
Let $X \into A$ be a closed immersion of a scheme $X$ inside a smooth $\BC$-scheme $A$. Let $J \subset \OO_A$ be the corresponding sheaf of ideals. The (absolute) truncated cotangent complex is the two term complex
\begin{equation}\label{truncated_cotg_complex}
\BL_X = \bigl[ J/J^2 \to \Omega_A|_X\bigr] \,\in\,\derived^{[-1,0]}(\QCoh_X).
\end{equation}
Let $\mathscr I_A\subset \OO_{A\times A}$ and $\mathscr I_X \subset \OO_{X\times X}$ be the ideal sheaves of the diagonal embeddings
\[
A \xhookrightarrow{i_{\Delta_A}} A\times A,\quad X \xhookrightarrow{i_{\Delta_X}} X\times X,
\]
respectively. Huybrechts--Thomas \cite[\S\,2]{HT} show how to construct a canonical morphism
\begin{equation}\label{basic_morphism_FM}
\alpha_X\colon \OO_{\Delta_X} \to i_{\Delta_X\ast}\BL_{X}[1].
\end{equation}
It is represented in degrees $[-2,0]$ by the morphism of complexes
\begin{equation}\label{complex_defining_alpha}
\begin{tikzcd}
i_{\Delta_X\ast}(J/J^2) \arrow[equal]{d}\arrow{r} &
\mathscr I_A\big|_{X\times X}\arrow{d}\arrow{r} &
\OO_{X\times X} \\
i_{\Delta_X\ast}(J/J^2)\arrow{r} & 
\mathscr I_A/\mathscr I_A^2 \big|_{X\times X}
\end{tikzcd}
\end{equation}
where the quasi-isomorphism between the top complex and $\OO_{\Delta_X}$ is proved as a consequence of \cite[Lemma 2.2]{HT}.
The extension class 
\[
\alpha_X \in \Ext_{X\times X}^1(\OO_{\Delta_X},i_{\Delta_X\ast}\BL_{X})
\]
corresponding to \eqref{basic_morphism_FM} is called the \emph{truncated universal Atiyah class}. It does not depend on the choice of embedding $X \subset A$.

The main observation in \cite{HT}, at this point, is that the map \eqref{basic_morphism_FM} 
can be seen as a map of Fourier--Mukai kernels. In particular, for a perfect complex $E$ on $X$, one can view $\RR\pi_{2\ast}(\pi_1^{\ast}E\otimes \alpha_X)$ 
as a canonical morphism
\[
\At_E\colon E \to E \otimes \BL_{X}[1]
\]
in $\derived(\QCoh_X)$, where $\pi_i\colon X\times X\to X$ are the projections.
This is, by definition, the \emph{truncated Atiyah class} of $E$ introduced in \cite[Def.~2.6]{HT}. It can of course be seen as an element
\begin{equation}\label{def:truncated_atiyah_class}
\At_E \in \Ext_X^1(E,E\otimes \BL_{X}).
\end{equation}
Under the canonical morphism $\BL_{X} \to h^0(\BL_{X}) = \Omega_{X}$,
the extension $\At_E$ projects onto the classical Atiyah class in $\Ext_X^1(E,E\otimes \Omega_{X})$.

\subsubsection{Relative setting}\label{sec:relative} 
We consider the following setup, which we recall verbatim from \cite[\S\,2]{HT} and \cite{HT_erratum}. 

Let $B$ be a scheme, $X \to B$ a $B$-scheme equipped with a closed immersion $X \into A$ with ideal $J \subset \OO_A$.
We assume we have a commutative diagram
\begin{equation}
\begin{tikzcd}\label{smooth_embeddings}
X \arrow[hook]{r}\arrow{dr} 
& A_B \MySymb{dr} \arrow[hook]{r} \arrow{d}
& A\arrow{d} \\
& 
B\arrow[hook]{r} 
& \widetilde{B}
\end{tikzcd}
\end{equation}
where $\widetilde B$ and $A \to \widetilde B$ are smooth and the square is cartesian. In particular, both $A$ and $A_B \to B$ are smooth.
Let $J_B \subset \OO_{A_B}$ be the ideal sheaf of $X \subset A_B$. Then there is a natural morphism of chain complexes
\begin{equation}\label{dia99}
\begin{tikzcd}
J/J^2 \arrow{r}\arrow{d}
& J_B/J_B^2 \arrow{d} & \\
\Omega_A\big|_X \arrow{r}
& \Omega_{A/\widetilde B}\big|_X\arrow[equal]{r}
& \Omega_{A_B/B}\big|_X
\end{tikzcd}
\end{equation}
inducing a morphism
\[
j\colon \BL_X \to \BL_{X/B}.
\]
The \emph{relative truncated Atiyah class} of a perfect complex $E \in \Perf X$ is, by definition, the composition
\[
\At_{E/B} \colon E \xrightarrow{\At_E} E\otimes \BL_X[1] \xrightarrow{\id_E \otimes j[1]} E\otimes \BL_{X/B}[1].
\]
It corresponds to the element 
\[
\At_{E/B}\,\in\,\Ext_X^1(E,E\otimes \BL_{X/B})
\]
obtained as the image of $\At_E$ under the map $(\id_E \otimes j[1])_\ast\colon \Ext_X^1(E,E\otimes \BL_{X}) \to \Ext_X^1(E,E\otimes \BL_{X/B})$.

\subsection{Adding in the group action}

In this section we prove Theorem \ref{main_thm} (which builds on the situation of \S\,\ref{sec:absolute}), along with its relative analogue (which builds on the situation of \S\,\ref{sec:relative}).

\subsubsection{Absolute setting}\label{absolute_again}
We first go back to the absolute setting of \S\,\ref{sec:absolute}. 

Let $G$ be an affine algebraic group, and let 
\[
X \subset A
\]
be a $G$-\emph{equivariant embedding} of noetherian separated schemes, where $A$ is smooth. Recall (cf.~Example \ref{rmk:G-equiv_emb}) that this situation is achieved if $X$ is quasi-projective and has a $G$-equivariant line bundle. 
Under these assumptions, we have seen that the truncated cotangent complex is canonically $G$-equivariant, i.e.~there is a canonical lift
\[
\BL_{X} \,\,\in\,\,\derived^{[-1,0]}(\QCoh_X^G)
\]
of the complex \eqref{truncated_cotg_complex}.

Let $i_{\Delta_X}\colon X \into X\times X$ be the diagonal embedding. 
The $G$-action on $X$ determines a $G$-equivariant structure on the structure sheaf $\OO_X$ (Example \ref{ex:equivariant_structure_sheaf}), and on the short exact sequence
\[
\mathscr I_{X} \into \OO_{X\times X} \onto\OO_{\Delta_X} = i_{\Delta_X\ast}\OO_X.
\]

\begin{lemma}\label{lemma:alpha_X_equivariant}
The morphism
\[
\alpha_X\colon \OO_{\Delta_X} \to i_{\Delta_X\ast} \BL_{X}[1]
\]
is naturally $G$-equivariant.
\end{lemma}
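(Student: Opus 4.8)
The plan is to revisit the explicit construction of $\alpha_X$ from \cite[\S\,2]{HT}, recalled around \eqref{complex_defining_alpha}, and to check that, once $X\subset A$ is a $G$-equivariant embedding with $A$ smooth, every sheaf and every arrow occurring in that construction carries a canonical $G$-equivariant structure, so that the morphism \eqref{basic_morphism_FM} lifts along the forgetful functor $\Phi_{X\times X}\colon\derived(\QCoh_{X\times X}^G)\to\derived(\QCoh_{X\times X})$.

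First I would equip $A\times A$ and $X\times X$ with the diagonal $G$-actions, so that $X\times X\into A\times A$ and the diagonal immersions $i_{\Delta_A}$, $i_{\Delta_X}$ become $G$-equivariant. Then the ideal sheaves $J\subset\OO_A$, $\mathscr I_A\subset\OO_{A\times A}$ and $\mathscr I_X\subset\OO_{X\times X}$ are kernels of morphisms of $G$-equivariant sheaves (using that $\OO$ is equivariant, Example \ref{ex:equivariant_structure_sheaf}), hence inherit $G$-equivariant structures; so do the subquotients $J/J^2$, $\mathscr I_A/\mathscr I_A^2$, the pullbacks $\mathscr I_A|_{X\times X}$ and $\mathscr I_A/\mathscr I_A^2|_{X\times X}$, and the pushforwards along $i_{\Delta_X}$, all by means of the equivariant tensor, pullback and pushforward functors of \S\,\ref{sec:underived_functors}. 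The horizontal arrows of \eqref{complex_defining_alpha} are built from inclusions of ideals and from the exterior derivative $\dd\colon\OO_A\to\Omega_A$, while the vertical arrows are an identity and the canonical quotient $\mathscr I_A\onto\mathscr I_A/\mathscr I_A^2$; all of these are $G$-equivariant (cf.~\S\,\ref{sec:truncated_cotg} for the equivariance of $\dd$). Hence the whole diagram \eqref{complex_defining_alpha} is a diagram in $\QCoh_{X\times X}^G$, and its bottom row is a representative in $\derived(\QCoh_{X\times X}^G)$ of $i_{\Delta_X\ast}\BL_X[1]$ equipped with the canonical equivariant structure of \S\,\ref{sec:truncated_cotg}.

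It then remains to see that the quasi-isomorphism between the top row $P^\bullet$ of \eqref{complex_defining_alpha} and $\OO_{\Delta_X}$, supplied by \cite[Lemma~2.2]{HT}, is an isomorphism already in $\derived(\QCoh_{X\times X}^G)$: it is the morphism of complexes induced by the $G$-equivariant surjection $\OO_{X\times X}\onto\OO_{\Delta_X}$, and its cone becomes acyclic after applying $\Phi_{X\times X}$; since $\Phi_{X\times X}$ is exact and reflects exactness (Remark \ref{remark:forget_reflects_exactness}), the cone is acyclic in $\derived(\QCoh_{X\times X}^G)$ as well. Thus the roof
\[
\OO_{\Delta_X}\,\xleftarrow{\sim}\,P^\bullet\,\longrightarrow\,i_{\Delta_X\ast}\BL_X[1]
\]
lives in $\derived(\QCoh_{X\times X}^G)$ and is a lift of $\alpha_X$ along $\Phi_{X\times X}$, which is exactly the assertion that $\alpha_X$ is $G$-equivariant. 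Naturality of this lift — in particular its independence of the chosen $G$-equivariant embedding $X\subset A$ — follows by the same comparison argument as in the non-equivariant case, since the diagram \eqref{big_qis} lifts verbatim to $\QCoh^G$ (see \S\,\ref{sec:truncated_cotg}).

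I expect no genuine obstacle here: the argument is essentially a bookkeeping task. The one point requiring care is the systematic verification that the various ideal and conormal sheaves, together with their restrictions and pushforwards occurring in \eqref{complex_defining_alpha}, do carry mutually compatible equivariant structures, and that \cite[Lemma~2.2]{HT} goes through in the equivariant category — both of which are handled uniformly by the principle that $\Phi$ is exact and reflects exactness, once all the structure maps have been recorded to be $G$-equivariant.
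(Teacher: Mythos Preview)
Your proposal is correct and follows the same route as the paper: equip the square of closed immersions $X\into A$, $X\times X\into A\times A$ and their ideal-sheaf sequences with their natural $G$-structures, then observe that Diagram~\eqref{complex_defining_alpha} lifts to $\QCoh_{X\times X}^G$ and represents $\alpha_X$. Your write-up is in fact more explicit than the paper's, spelling out the quasi-isomorphism with $\OO_{\Delta_X}$ via Remark~\ref{remark:forget_reflects_exactness} and the independence of the embedding via~\eqref{big_qis}.
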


\begin{proof}
Since $X\subset A$ is a $G$-equivariant embedding, the diagram of closed immersions
\[
\begin{tikzcd}[row sep=large]
X\arrow[hook,swap]{d}{i_{\Delta_X}}\arrow[hook]{r} & A\arrow[hook]{d}{i_{\Delta_A}} \\
X\times X \arrow[hook]{r} & A\times A
\end{tikzcd}
\]
along with its associated ideal sheaf short exact sequences, are also $G$-equivariant in a natural way. Therefore Diagram \eqref{complex_defining_alpha}, which is built out of these equivariant short exact sequences through the $G$-equivariant geometric functors (cf.~\S\,\ref{sec:underived_functors}), inherits a $G$-equivariant structure. But Diagram \eqref{complex_defining_alpha} represents precisely $\alpha_X$. The claim follows. 
\end{proof}

We finally have all the tools to complete the proof of Theorem \ref{main_thm}. 

\begin{theorem}\label{main_thm_BODY}
Let $G$ be a complex affine algebraic group acting on a separated noetherian $\BC$-scheme $X$ admitting a $G$-equivariant embedding in a smooth $G$-scheme. 
Fix a perfect complex $E \in \Perf X$. Then every lift of $E$ to $\derived(\QCoh_X^G)$ makes $\At_E$ canonically $G$-equivariant. 
\end{theorem}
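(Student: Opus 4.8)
The plan is to carry out the Fourier--Mukai construction of $\At_E$ recalled in \S\,\ref{sec:absolute} entirely inside the equivariant derived categories $\derived(\QCoh_{X\times X}^G)$ and $\derived(\QCoh_X^G)$, and then to recover the Huybrechts--Thomas morphism by applying the forgetful functor $\Phi$, exploiting that the equivariant derived functors are compatible with $\Phi$ (property \eqref{commute_with_Phi} of \S\,\ref{sec:equiv_der_functors}).

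First I would put the diagonal $G$-action on $X\times X$. A $G$-equivariant embedding $X\into A$ into a smooth $G$-scheme induces a $G$-equivariant embedding $X\times X\into A\times A$ into the smooth $G$-scheme $A\times A$, so by Lemma \ref{lemma:equivariant_embedding_implies_dagger} the scheme $X\times X$ again satisfies $(\dagger)$ and the formalism of \S\,\ref{sec:equiv_der_functors} is available on it; moreover $\pi_1,\pi_2\colon X\times X\to X$ and $i_{\Delta_X}\colon X\into X\times X$ are morphisms of $G$-schemes with $\pi_1\circ i_{\Delta_X}=\pi_2\circ i_{\Delta_X}=\id_X$. Now fix a lift $\widetilde E\in\derived(\QCoh_X^G)$ of $E$; since $E$ is perfect, $\widetilde E\in\Perf^G(X)$ and $\pi_1^\ast\widetilde E\in\Perf^G(X\times X)$. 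By Lemma \ref{lemma:alpha_X_equivariant} the universal class $\alpha_X$ lifts canonically to a morphism
\[
\widetilde\alpha_X\colon \OO_{\Delta_X}\longrightarrow i_{\Delta_X\ast}\BL_X[1]
\]
in $\derived(\QCoh_{X\times X}^G)$, where $\OO_{\Delta_X}$ carries the equivariant structure of Example \ref{ex:equivariant_structure_sheaf} and $\BL_X$ its equivariant lift from \S\,\ref{absolute_again}. Tensoring $\widetilde\alpha_X$ with $\pi_1^\ast\widetilde E$ and applying the equivariant pushforward $\RR\pi_{2\ast}$ yields a morphism
\[
\widetilde{\At}_E\colon \RR\pi_{2\ast}\bigl(\pi_1^\ast\widetilde E\otimes\OO_{\Delta_X}\bigr)\longrightarrow \RR\pi_{2\ast}\bigl(\pi_1^\ast\widetilde E\otimes i_{\Delta_X\ast}\BL_X[1]\bigr)
\]
in $\derived(\QCoh_X^G)$; using the equivariant projection formula for $i_{\Delta_X}$ together with $\pi_1\circ i_{\Delta_X}=\pi_2\circ i_{\Delta_X}=\id_X$, exactly as in the non-equivariant case, the source becomes $\widetilde E$ and the target becomes $\widetilde E\otimes\BL_X[1]$, so $\widetilde{\At}_E$ is a morphism $\widetilde E\to\widetilde E\otimes\BL_X[1]$ in $\derived(\QCoh_X^G)$.

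It then remains to identify $\Phi_X(\widetilde{\At}_E)$ with $\At_E$. Since $\pi_1^\ast$, $\otimes$, $\RR\pi_{2\ast}$ and the projection-formula isomorphisms all commute with $\Phi$, and $\Phi_{X\times X}(\widetilde\alpha_X)=\alpha_X$, applying $\Phi_X$ to the construction above reproduces verbatim the definition $\At_E=\RR\pi_{2\ast}(\pi_1^\ast E\otimes\alpha_X)$ of \S\,\ref{sec:absolute}. Hence $\At_E$ lies in the image of the natural map $\Hom_{\derived(\QCoh_X^G)}(E,E\otimes\BL_X[1])\to\Hom_X(E,E\otimes\BL_X[1])$, i.e.~it is $G$-equivariant in the sense of Definition \ref{G-equiv-hom}; and the lift $\widetilde{\At}_E$ is determined by the chosen lift $\widetilde E$ alone (the input $\widetilde\alpha_X$ being canonical by Lemma \ref{lemma:alpha_X_equivariant}), which gives the asserted canonicity. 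The relative analogue built on \S\,\ref{sec:relative} follows at once by post-composing $\widetilde{\At}_E$ with $\id_{\widetilde E}\otimes j[1]$, since the morphism $j\colon\BL_X\to\BL_{X/B}$ induced by \eqref{dia99} is $G$-equivariant when the embeddings of \eqref{smooth_embeddings} are chosen $G$-equivariantly.

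I do not expect a genuine obstacle here: the substantive work has already been front-loaded into \S\,\ref{sec:equiv_der_functors} (existence of the equivariant derived functors and their compatibility with $\Phi$) and into Lemma \ref{lemma:alpha_X_equivariant} (equivariance of the universal class $\alpha_X$). The only points that require care are bookkeeping: confirming that $X\times X$ with the diagonal action still satisfies $(\dagger)$, so that the equivariant Fourier--Mukai calculus applies, and checking that the two identifications $\RR\pi_{2\ast}(\pi_1^\ast\widetilde E\otimes\OO_{\Delta_X})\simeq\widetilde E$ and $\RR\pi_{2\ast}(\pi_1^\ast\widetilde E\otimes i_{\Delta_X\ast}\BL_X[1])\simeq\widetilde E\otimes\BL_X[1]$ lift along $\Phi$ --- which they do, because the equivariant functors obey the same formal identities (the projection formula for $i_{\Delta_X}$, the compatibility $\RR\pi_{2\ast}\circ i_{\Delta_X\ast}=\RR(\pi_2\circ i_{\Delta_X})_\ast$) and are compatible with $\Phi$.
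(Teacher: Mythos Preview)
Your proposal is correct and follows exactly the same approach as the paper's proof: endow $X\times X$ with the diagonal action, observe that it satisfies $(\dagger)$ via Lemma \ref{lemma:equivariant_embedding_implies_dagger}, invoke Lemma \ref{lemma:alpha_X_equivariant} for the equivariance of $\alpha_X$, and then apply the equivariant Fourier--Mukai functors $\RR\pi_{2\ast}(\pi_1^\ast(-)\otimes\alpha_X)$ together with their compatibility with $\Phi$. Your write-up is in fact more detailed than the paper's (you spell out the projection-formula identifications of source and target and the verification that $\Phi$ recovers $\At_E$), but there is no difference in strategy.
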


Recall (cf.~Definition \ref{G-equiv-hom}) that by `$\At_E$ is $G$-equivariant' we mean that the corresponding morphism $E \to E \otimes \BL_X[1]$ admits a lift to $\derived(\QCoh_X^G)$.

\begin{proof}
Endow $X\times X$ with the diagonal action. Then the projections $\pi_i\colon X\times X \to X$ are $G$-equivariant and both $X$ and $X\times X$ satisfy Condition $(\dagger)$ by Lemma \ref{lemma:equivariant_embedding_implies_dagger}. Since $\alpha_X$ is $G$-equivariant by Lemma \ref{lemma:alpha_X_equivariant}, using equivariant pushforward $\RR \pi_{2\ast}$, pullback $\pi_1^\ast$ and tensor product $\otimes$ (cf.~\S\,\ref{sec:equiv_der_functors}), we deduce that the morphism
\[
\RR \pi_{2\ast}(\pi_1^\ast E {\otimes} \alpha_X)\colon E \to E \otimes \BL_{X}[1]
\]
is canonically lifted to $\derived(\QCoh_X^G)$, 
which proves the result.
\end{proof}

\subsubsection{Relative setting}\label{relative_again}

Suppose we are in the situation depicted in Diagram \eqref{smooth_embeddings}, and assume
$X \into A_B \into A$ are $G$-equivariant embeddings. Then we obtain the following consequence of Theorem \ref{main_thm_BODY}.

\begin{corollary}
The relative truncated Atiyah class $\At_{E/B}$ is $G$-equivariant.
\end{corollary}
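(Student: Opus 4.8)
The plan is to reduce everything to the absolute case handled in Theorem~\ref{main_thm_BODY} together with the equivariant functoriality already assembled in \S\,\ref{sec:underived_functors} and \S\,\ref{sec:equiv_der_functors}. Recall that by definition $\At_{E/B}$ is the composition $E \xrightarrow{\At_E} E\otimes\BL_X[1] \xrightarrow{\id_E\otimes j[1]} E\otimes\BL_{X/B}[1]$, so it suffices to show that each of the two arrows lifts to $\derived(\QCoh_X^G)$ in a compatible way.

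First I would invoke Theorem~\ref{main_thm_BODY}: since $X\into A$ is a $G$-equivariant embedding into a smooth $G$-scheme, the absolute Atiyah class $\At_E\colon E\to E\otimes\BL_X[1]$ is canonically $G$-equivariant once we fix a lift of $E$ to $\derived(\QCoh_X^G)$. Next I would observe that the morphism $j\colon \BL_X\to\BL_{X/B}$ is itself $G$-equivariant. Indeed, by the discussion in \S\,\ref{sec:truncated_cotg} both $\BL_X$ and $\BL_{X/B}$ carry canonical lifts to $\derived^{[-1,0]}(\QCoh_X^G)$ coming from the $G$-equivariant embeddings $X\into A_B\into A$, and the comparison diagram \eqref{dia99} defining $j$ is built entirely out of $G$-equivariant short exact sequences of ideal sheaves and the (equivariant) exterior derivative, exactly as in the proof of Lemma~\ref{lemma:alpha_X_equivariant}; hence $j$ lifts to a morphism in $\derived(\QCoh_X^G)$. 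Tensoring with the equivariant object $E$ and shifting, $\id_E\otimes j[1]$ is then a morphism in $\derived(\QCoh_X^G)$ as well, using that $\otimes^{\LL}$ is an equivariant derived functor commuting with $\Phi$.

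Finally, composing the two equivariant lifts inside $\derived(\QCoh_X^G)$ and applying the forgetful functor $\Phi$, which is exact and compatible with composition, recovers $\At_{E/B}$ in $\derived(\QCoh_X)$. Thus $\At_{E/B}$ lies in the image of $\Hom_{\derived(\QCoh_X^G)}(E,E\otimes\BL_{X/B}[1])\to\Hom_X(E,E\otimes\BL_{X/B}[1])$, which is precisely the assertion that $\At_{E/B}$ is $G$-equivariant in the sense of Definition~\ref{G-equiv-hom}.

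I expect the only real subtlety to be the equivariance of $j$ — one must check that the diagram \eqref{dia99}, and in particular the identification $\Omega_{A/\widetilde B}|_X = \Omega_{A_B/B}|_X$ arising from the cartesian square in \eqref{smooth_embeddings}, is compatible with the canonical $G$-equivariant structures on the cotangent complexes; but this is a direct consequence of the functoriality of the construction in \S\,\ref{sec:truncated_cotg} and of the fact that the square in \eqref{smooth_embeddings} is a diagram of $G$-schemes. Everything else is a formal consequence of the compatibility of the equivariant derived functors with $\Phi$ established in \S\,\ref{sec:equiv_der_functors}.
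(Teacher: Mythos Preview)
Your proof is correct and follows essentially the same route as the paper: the paper's argument is simply that the $G$-equivariance of the embeddings $X\hookrightarrow A_B\hookrightarrow A$ makes the morphism $j$ (induced by Diagram~\eqref{dia99}) $G$-equivariant, so the composite $\At_{E/B}$ lives in $\derived(\QCoh_X^G)$ entirely. Your write-up expands on exactly these points, with the same decomposition into the absolute Atiyah class (handled by Theorem~\ref{main_thm_BODY}) followed by the equivariant morphism $\id_E\otimes j[1]$.
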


\begin{proof}
The assumption that $X \into A_B \into A$ are $G$-equivariant implies that the morphism $j\colon \BL_X \to \BL_{X/B}$, induced by Diagram \eqref{dia99}, is $G$-equivariant. Therefore
\[
\At_{E/B}\colon E \to E\otimes \BL_X[1] \xrightarrow{\id_E\otimes j[1]} E\otimes \BL_{X/B}[1]
\]
lives in $\derived(\QCoh_X^G)$ entirely. 
\end{proof}

\begin{remark}
By Corollary \ref{lemma:equivariant_composition}, the morphism $\At_{E/B}$ can be seen as an element of 
\[
\Ext^1(E,E\otimes \BL_{X/B})^G \subset \Ext^1(E,E\otimes \BL_{X/B}).
\]
Indeed, since both $\At_E$ and $\id_E \otimes j[1]$ are morphisms in $\derived(\QCoh_X^G)$, the composition
\[
\Hom_X(E,E) \xrightarrow{\At_{E\ast}} \Hom_X(E,E\otimes \BL_{X}[1]) \xrightarrow{(\id_E \otimes j[1])_\ast} \Hom_X(E,E\otimes \BL_{X/B}[1])
\]
is a morphism of $G$-representations, and as such it preserves $G$-invariant parts. Therefore $\id_E \in \Hom_X(E,E)^G$ gets sent to $\At_{E/B} \in \Ext^1(E,E\otimes \BL_{X/B})^G$.
\end{remark}

\section{Application to moduli spaces of perfect complexes}\label{sec:perfect_complexes}

In this section we shall prove Theorem \ref{thm:main2}, whose statement we recall below (Theorem \ref{main2_body}) for the reader's convenience.

\subsection{The moduli space of perfect complexes}\label{sec:moduli_M}
Fix an affine algebraic group $G$ and a noetherian separated $\BC$-scheme $B$ carrying the trivial $G$-action $p_2\colon G\times B \to B$.
Let $f\colon Y \to B$ be a smooth (connected) projective $G$-invariant morphism of relative dimension $d$, 
where the $G$-action on $Y$ is denoted $\sigma_Y \colon G\times Y \to Y$. By assumption, $G$ preserves the fibres of $f$.

As in \cite[\S\,4.1]{HT}, let $M \to B$ be a relative fine separated moduli space of simple perfect complexes of rank $r \neq 0$ on the fibres of $f$, 
with fixed determinant $\mathcal L \in \Pic Y$ and fixed numerical invariants. Then $M$ is an algebraic space, locally complete as a moduli space, 
and there is a universal perfect complex
\[
E\,\in\,\Perf(Y\times_BM).
\]
Denote by $\iota_b\colon Y_b \into Y$ the inclusion of a fibre of $f$. If a point $m \in M$ sits over $b \in B$, let $i_m\colon Y_b \,\simto\,Y_b \times \set{m} \into Y\times_BM$ denote the corresponding inclusion.

For a scheme $S\to B$, the universal property of the pair $(M,E)$ translates into a bijection between
\begin{itemize}
\item morphisms $S \to M$ over $B$, and
\item equivalence classes of complexes $F \in \Perf (Y\times_BS)$ such that for all $s\in S$ (say, sitting over $b \in B$) the derived restriction $F|_{Y_b}$ is isomorphic to $\LL i_m^\ast E$ for some $m \in M$ (sitting over $b$), and such that $\det F = \pi_S^\ast \mathcal L'\otimes \pi_Y^\ast \mathcal L$ for some $\mathcal L' \in \Pic S$ (where $\pi_S$ and $\pi_Y$ are the projections from $Y\times_BS$).
\end{itemize}

Two complexes $F$ and $F'$ in $\Perf (Y\times_BS)$ are considered equivalent if there exists a line bundle $\mathcal H \in \Pic S$ such that $F = F' \otimes  \pi_S^\ast \mathcal H$. The correspondence assigns to a $B$-morphism $h\colon S \to M$ the equivalence class of the perfect complex $(\id_Y \times h)^\ast E \in \Perf(Y\times_BS)$.

\subsection{Equivariance of the universal complex}\label{sec:equiv_uni_complex}

From now on we set $X = Y\times_BM$. Note that $X$ does not have a $G$-action yet. In the next proposition, we construct such a $G$-action and we prove that the universal complex is equivariant. Before doing so, we state a fact that we will need during the proof.

\begin{fact}\label{rmk:Rosenlicht_trick}
A theorem of Rosenlicht \cite[Thm.~2]{Rosenlicht}, whose proof is sketched in \cite[Rem.~7.1]{Dolgachev_GIT}, says that if $Z$ and $Z'$ are irreducible varieties over an algebraically closed field, the natural homomorphism $\OO(Z)^\times \otimes \OO(Z')^\times \to \OO(Z\times Z')^\times$ is surjective. In fact, \cite[Rem.~7.1]{Dolgachev_GIT} shows more: one can write every function $\alpha \in \OO(Z\times Z')^\times$ as $\alpha = \beta\boxtimes \beta'$ for $\beta \in \OO(Z)^\times$ and $\beta' \in \OO(Z')^\times$. 
\end{fact}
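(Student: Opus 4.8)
The plan is to establish the stronger ``$\alpha=\beta\boxtimes\beta'$'' assertion, from which surjectivity is immediate. I would reduce it to the following \emph{rigidity claim}: if $\gamma\in\OO(Z\times Z')^\times$ restricts to the constant function $1$ on both $Z\times\{z_0'\}$ and $\{z_0\}\times Z'$, for some closed points $z_0\in Z$ and $z_0'\in Z'$ (these exist because the ground field $k$ is algebraically closed), then $\gamma\equiv1$. To see that the claim suffices, take an arbitrary $\alpha\in\OO(Z\times Z')^\times$ and put $\beta:=\alpha|_{Z\times\{z_0'\}}$, $\beta':=\alpha|_{\{z_0\}\times Z'}$ and $c:=\alpha(z_0,z_0')$; these lie in $\OO(Z)^\times$, $\OO(Z')^\times$ and $k^\times$ respectively, since restriction to a slice sends units to units. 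A one-line check shows that $\gamma:=c\cdot\alpha\cdot(p_Z^\ast\beta)^{-1}(p_{Z'}^\ast\beta')^{-1}$ satisfies the hypotheses of the claim, so $\gamma\equiv1$, i.e.\ $\alpha=(c^{-1}\beta)\boxtimes\beta'$ with $c^{-1}\beta\in\OO(Z)^\times$ and $\beta'\in\OO(Z')^\times$.

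For the rigidity claim I would first reduce to the case where $Z$ and $Z'$ are smooth and admit smooth proper compactifications. Passing to normalisations, and then to resolutions of singularities, preserves both the hypothesis and the conclusion: each is a dominant surjective morphism, hence induces an injection on rings of regular functions (so it is enough to prove $\gamma\equiv1$ after pulling back) and lets closed points lift. Since a smooth variety over a field of characteristic zero admits a smooth proper compactification, I may then assume $Z\subseteq\bar Z$ and $Z'\subseteq\bar Z'$ are dense open subschemes of smooth, integral, proper $k$-schemes, and regard $\gamma$ as a rational function on the smooth, integral, proper scheme $\bar Z\times\bar Z'$.

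The heart of the matter is then a divisor computation. Since $\gamma$ is a unit on $Z\times Z'$, the divisor $\divisor(\gamma)$ is supported on the boundary $(\bar Z\times\bar Z')\setminus(Z\times Z')=\bigl((\bar Z\setminus Z)\times\bar Z'\bigr)\cup\bigl(\bar Z\times(\bar Z'\setminus Z')\bigr)$, and a dimension count (a codimension-one $W\subseteq(\bar Z\setminus Z)\times\bar Z'$ must equal $C\times\bar Z'$ for a codimension-one component $C$ of $\bar Z\setminus Z$, and symmetrically for the other half) shows that $\divisor(\gamma)=\sum_i a_i\,[C_i\times\bar Z']+\sum_j b_j\,[\bar Z\times F_j]$ for codimension-one components $C_i\subseteq\bar Z\setminus Z$ and $F_j\subseteq\bar Z'\setminus Z'$. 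Restricting this cycle to the slice $\bar Z\times\{z_0'\}$ kills each $[\bar Z\times F_j]$, because $z_0'\in Z'$ whereas $F_j\subseteq\bar Z'\setminus Z'$, and identifies each $[C_i\times\bar Z']$ with $[C_i]$ under $\bar Z\times\{z_0'\}\cong\bar Z$; on the other hand $\gamma$ equals $1$ on the dense open $Z\times\{z_0'\}$ of that slice, so $\gamma|_{\bar Z\times\{z_0'\}}\equiv1$ and its divisor is $0$, forcing all $a_i=0$. By symmetry all $b_j=0$, so $\divisor(\gamma)=0$ on $\bar Z\times\bar Z'$. Thus $\gamma$ and $\gamma^{-1}$ are both regular, i.e.\ $\gamma$ is a global unit on the proper integral $k$-scheme $\bar Z\times\bar Z'$, hence $\gamma\in k^\times$; evaluating at $(z_0,z_0')$ gives $\gamma=1$.

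I expect the passage to smooth proper models to be the delicate point: it is what makes the list of boundary divisors available and lets the slice restrictions go through, and it is the only place where characteristic zero intervenes, via resolution of singularities. This is harmless for the present paper, whose conventions fix everything over $\BC$; over an arbitrary algebraically closed field one would instead run Rosenlicht's original, more elementary argument (or use de Jong's alterations) to reach the same conclusion.
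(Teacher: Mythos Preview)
The paper does not supply a proof of this Fact at all: it is recorded as a citation to Rosenlicht and to Dolgachev, and is then used as a black box inside the proof of Proposition~\ref{prop:At_E_is_equiv}. So there is no ``paper's own proof'' to compare against, and the relevant question is simply whether your argument is sound.

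It is. The reduction to the rigidity claim via $\gamma=c\cdot\alpha\cdot(p_Z^\ast\beta)^{-1}(p_{Z'}^\ast\beta')^{-1}$ is correct (one checks $\gamma$ restricts to $1$ on both reference slices because $\beta(z_0)=\beta'(z_0')=c$). The divisor computation is also correct: every irreducible codimension-one component of the boundary of $\bar Z\times\bar Z'$ is either of the form $C\times\bar Z'$ or $\bar Z\times F$ (an irreducible set contained in a union of two closed sets lies in one of them, and then the dimension count forces the product form), the slice $\bar Z\times\{z_0'\}$ meets the former transversely in $C$ and misses the latter, and the restriction of a principal Cartier divisor is the divisor of the restricted function, so $\sum a_i[C_i]=0$ forces all $a_i=0$.

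One remark on the passage to smooth proper models: you do not actually need smoothness, only normality of $\bar Z$, $\bar Z'$ and their product (to identify Cartier and Weil divisors on the relevant locus and to have $\OO(\bar Z\times\bar Z')^\times=k^\times$). Normal compactifications exist in any characteristic (Nagata plus normalisation), and a product of normal varieties over a perfect field is normal, so the appeal to resolution of singularities---and with it the restriction to characteristic zero---can be dropped. Your closing caveat already anticipates this, and in any case the paper works over $\BC$, so the point is cosmetic.
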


\begin{prop}\label{prop:At_E_is_equiv}
The universal complex $E \in \Perf X$ is naturally $G$-equivariant.
\end{prop}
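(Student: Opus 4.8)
The plan is to equip $X = Y \times_B M$ with a $G$-action for which the structural morphism $\pi_Y \colon X \to Y$ is $G$-equivariant, and then to identify the pullback $(\id_Y \times h)^\ast E$ along a suitable automorphism $h = \rho_g$ of $M$ with the original universal complex $E$, up to the ambiguity inherent in the moduli problem (tensoring by a line bundle pulled back from $M$). Concretely, since $G$ acts trivially on $B$ and preserves the fibres of $f$, the $G$-action $\sigma_Y \colon G \times Y \to Y$ together with the trivial action on $M$ does \emph{not} immediately act on $X$; instead, for each $T$-valued point $g \in G(T)$, the automorphism $\rho_g$ of $Y_T$ over $B_T$ sends a fibrewise-simple family to another such family, hence by the universal property of $(M,E)$ induces an automorphism $\psi_g$ of $M_T$ over $B_T$, and the pair $(\rho_g, \psi_g)$ gives an automorphism of $X_T$. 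First I would check that $g \mapsto (\rho_g, \psi_g)$ is compatible with composition (so that it genuinely defines a $G$-action $\sigma_X$ on $X$), using the uniqueness part of the universal property; this is where one must be slightly careful that the induced maps $\psi_g$ compose correctly on the nose, not just up to $2$-isomorphism.

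Next I would unwind what the universal property says about $E$ under this action. By construction $\psi_g$ classifies the complex $\rho_g^\ast E$ (more precisely the family $(\rho_g \times \id_M)^\ast E$ on $Y_T \times_{B_T} M_T$), so by the universal property there is a line bundle $\mathcal{N}_g \in \Pic(M_T)$ and an isomorphism
\[
\vartheta_g \colon E_T \,\simto\, \rho_g^\ast E_T \otimes \pi_{M}^\ast \mathcal{N}_g
\]
on $X_T$, where $\rho_g$ here denotes the induced automorphism of $X_T$. The obstacle is that the line bundle $\mathcal{N}_g$ spoils the cocycle condition \eqref{equivariance_diagram}: we need $\vartheta_g$ landing in $\rho_g^\ast E_T$ itself, with no twist. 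The standard device (this is exactly why Fact \ref{rmk:Rosenlicht_trick} is stated) is to rigidify. One first observes that $g \mapsto \mathcal{N}_g$ is, suitably interpreted, a cocycle valued in $\Pic(M)$, and one wants to trivialise it. Pick a point (or a section) of $M \to B$ along which one can normalise; after pulling back $E$ by a line bundle from $M$ one may assume $\mathcal{N}_g$ restricts trivially there, and then Rosenlicht's theorem is used to show that a family of line bundles on $M$ parametrised by the irreducible variety $G$, trivial on a slice, is itself pulled back from $G$ — and being trivial on the slice, is actually trivial. This kills the twist and promotes the $\vartheta_g$ to an honest $G$-equivariant structure on (a twist of) $E$; since twisting $E$ by a line bundle from $M$ does not change its class in the moduli problem, we may take this twisted complex as our universal complex.

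I expect the main obstacle to be precisely the bookkeeping around the line bundle ambiguity: verifying that the $\mathcal{N}_g$ assemble into something one can trivialise (a descent/cocycle argument over $G \times G$, using the cocycle condition for the $\psi_g$ and the seesaw principle), and then applying Rosenlicht's result in the form quoted in Fact \ref{rmk:Rosenlicht_trick} to write the relevant unit or line bundle as an external product and conclude it is trivial after the normalisation. Once the twist is removed, checking that the resulting $\vartheta$ satisfies the cocycle condition \eqref{eqn:cocycle_condition} is formal — it follows from the uniqueness in the universal property together with the compatibility $\psi_{hg} = \psi_g \circ \psi_h$ (or its relative-point analogue \eqref{equivariance_diagram}) established in the first step. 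I would also remark that the action on $X$ so constructed makes $\pi_Y$ and $\pi_M$ equivariant for the given action on $Y$ and the constructed action on $M$, which is what is needed downstream in \S\ref{sec:perfect_complexes} to invoke Theorem \ref{main_thm_BODY} and Corollary \ref{cor:Equivariant_Grothendieck_Global_sections}.
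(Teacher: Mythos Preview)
Your overall architecture is right --- construct a $G$-action on $M$ from the universal property, lift it to $X$, produce isomorphisms $\vartheta_g$, and check the cocycle condition --- but you have inverted where the real difficulty lies, and this creates a genuine gap.

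The line bundle twist $\mathcal N_g$ is \emph{not} where Rosenlicht enters. In the paper the analogous line bundle $\mathcal H \in \Pic(G\times M)$ is killed cheaply: since $G$ is smooth, affine and connected one has $\Pic G = 0$, so $\mathcal H$ is pulled back from $M$; restricting to $\{e\}\times M$ (where the action is trivial) shows that pullback is trivial. No Rosenlicht needed here. Incidentally, the action on $X$ that makes $\tau^\ast E$ match $p_2^\ast E$ fibrewise is $(g,y,m)\mapsto(\sigma_Y(g,y),\sigma_M(g^{-1},m))$, i.e.~with an inverse on the $M$-factor; your pairing $(\rho_g,\psi_g)$ without the twist does not give the identity on fibres.

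The genuine gap is your claim that the cocycle condition is ``formal from the uniqueness in the universal property.'' Uniqueness pins down the \emph{classifying maps} $\psi_g$, but it does \emph{not} pin down the isomorphisms $\vartheta_g\colon E\,\simto\,\rho_g^\ast E$: these are only determined up to $\Aut E$, which contains at least $\BC^\ast$ (and $E$ itself is not assumed simple, only fibrewise simple). Hence the defect
\[
F(g,h)\;=\;\vartheta_{hg}^{-1}\circ \rho_g^\ast\vartheta_h\circ\vartheta_g \;\in\;\Aut E
\]
is \emph{a priori} a nontrivial automorphism for each $(g,h)$. This is precisely where the paper invokes Fact~\ref{rmk:Rosenlicht_trick}: restricting $F$ to a fibre $E_m$ (where simplicity gives $\Aut E_m=\BC^\ast$) produces a regular unit $F_m\in\OO(G\times G)^\times$; Rosenlicht splits $F_m(g,h)=F_{1,m}(g)\cdot F_{2,m}(h)$, and the normalisation $\vartheta_e=\id$ forces $F_m\equiv 1$. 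Since this holds for every $m$, one concludes $F\equiv 1$. So Rosenlicht is used on $G\times G$ to tame the \emph{automorphism} ambiguity in the cocycle, not on $G\times M$ to tame the \emph{line bundle} ambiguity in $\vartheta$.
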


\begin{proof}
First of all, we lift the $G$-action $\sigma_Y\colon G\times Y \to Y$ to a $G$-action on $M$. Pulling back $E$ along
\[
\sigma_Y \times \id_M\colon G\times Y\times_BM \to Y\times_BM
\]
gives a family of perfect complexes parameterised by $G\times M$. By the universal property of $(M,E)$, this induces a $B$-morphism
\[
\sigma_M\colon G\times M \to M,
\]
which is a $G$-action on $M$. We have
\[
(\id_Y \times \sigma_M)^\ast E \,\cong\, (\sigma_Y \times \id_M)^\ast E\otimes \pi_{G,M}^\ast \mathcal H
\]
for some $\mathcal H \in \Pic(G\times M)$, where $\pi_{G,M}\colon G\times Y\times_BM \to G\times M$ is the projection. We claim that $\mathcal H$ is the trivial line bundle. Consider the projection $\pi_2\colon G\times M \to M$. Since $G$ is smooth and affine, we have $\Pic G = 0$, thus $\mathcal H = \pi_2^\ast \mathcal H'$ for some $\mathcal H' \in \Pic M$. However, $\mathcal H|_{\set{g}\times M}$ is trivial for all $g \in G$, in particular for $g = e$, where $e \in G$ is the group identity. Thus $\mathcal H'$ is trivial and hence so is $\mathcal H$. 
Then the previous isomorphism becomes
\[
(\id_Y \times \sigma_M)^\ast E \,\cong\, (\sigma_Y \times \id_M)^\ast E.
\]
Next, we have to make $E$ equivariant. We consider the $G$-action
\[
\tau\colon G\times Y\times_BM \to Y\times_BM,\quad (g,y,m)\mapsto (\sigma_Y(g,y),\sigma_M(g^{-1},m))
\]
on $X = Y\times_BM$. The pullback $\tau^\ast E$ corresponds to a $B$-morphism $\phi_\tau\colon G\times M \to M$. In fact, $\phi_\tau$ is the second projection. Indeed,
\[
\tau^\ast E|_{\set{g}\times Y_b \times\set{m}} = \LL i_n^\ast E,
\]
where $n = \sigma_M(g,\sigma_M(g^{-1},m)) = \sigma_M(e,m) = m$. Thus $\tau^\ast E|_{\set{g}\times Y_b \times\set{m}} = \LL i_m^\ast E$, and we obtain an isomorphism $\tau^\ast E \cong (\id_Y \times \phi_\tau)^\ast E \otimes \pi_{G,M}^\ast \mathcal H$ for some $\mathcal H \in \Pic(G\times M)$. For the same reason as before, $\mathcal H$ is trivial. Therefore, since $\phi_\tau$ is the projection, we obtain an isomorphism
\begin{equation}\label{equiv_str_E}
\vartheta \colon p_2^\ast E \,\simto\,\tau^\ast E
\end{equation}
of perfect complexes on $G\times X$, where $p_2 = \id_Y \times \phi_\tau\colon G\times X \to X$ is the projection. 

Finally, we need to verify that $\vartheta$ satisfies the cocycle condition. We follow \cite[Prop.~4.4]{Kool_Fixed_Point_Loci}, but we have to adapt the argument because the universal complex $E$ is not necessarily simple. By \cite[Prop.~2.4]{Kool_Fixed_Point_Loci}, it is enough to check the cocycle condition (in the form of Remark \ref{rmk:relative}) on closed points of $G$.
Let us normalise $\vartheta$, if necessary, to achieve $\vartheta_e = \id_E$. We need to show that the function
\[
F\colon G \times G \to \Aut E ,\quad (g,h) \mapsto \vartheta_{hg}^{-1} \circ \rho_g^\ast\vartheta_{h}\circ \vartheta_{g}
\]
is the constant $1 \in \BC^\ast \subset \Aut E$ where, as in \S\,\ref{sec:equiv_sheaves_preliminaries}, $\rho_g$ denotes the composition 
\[
X \,\simto\,\set{g} \times X \into G\times X\xrightarrow{\tau}X.
\]

We proceed as follows. Fix a closed point $m \in M$, sitting over $b \in B$. Set $E_m = \LL i_m^\ast E$, a perfect complex on $Y_b\simto Y_b \times \set{m} \into X$. Consider the commutative diagram
\begin{equation}\label{diag:restriction_of_action}
\begin{tikzcd}[row sep=large]
Y_b \times \set{m}\arrow{r}{\sim}\arrow[hook]{d}{i_m} &
\set{g}\times Y_b \times \set{m} \arrow[hook]{d}{g\times i_m}\arrow[hook]{r}{} &
G \times (Y_b \times \set{m})\arrow[hook]{d}{\id_G\times i_m}
\rar[-to,
to path={
([yshift=0.5ex]\tikztostart.east) --
([yshift=0.5ex]\tikztotarget.west) \tikztonodes}]{\sigma_{Y,m}}
\rar[-to,
to path={
([yshift=-0.5ex]\tikztostart.east) --
([yshift=-0.5ex]\tikztotarget.west) \tikztonodes}][swap]{p_{2,m}}
& Y_b \times \set{m} \arrow[hook]{d}{i_m} \\
X \arrow{r}{\sim}
& \set{g}\times X \arrow[hook]{r}
& G\times X
\rar[-to,
to path={
([yshift=0.5ex]\tikztostart.east) --
([yshift=0.5ex]\tikztotarget.west) \tikztonodes}]{\tau}
\rar[-to,
to path={
([yshift=-0.5ex]\tikztostart.east) --
([yshift=-0.5ex]\tikztotarget.west) \tikztonodes}][swap]{p_{2}}
& X
\end{tikzcd}
\end{equation}
where $\sigma_{Y,m}$ is the restriction of the action $\sigma_Y$ to the fibre $Y_b \subset Y$, identified with $Y_b\times \set{m} $. Restricting the isomorphism $\vartheta$ in \eqref{equiv_str_E} to the slice $G\times (Y_b \times \set{m}) \subset G\times X$ we obtain an isomorphism
\[
\vartheta(m) = \vartheta\big|_{G\times Y_b \times \set{m}}\colon p_{2,m}^\ast E_m \,\simto\, \sigma_{Y,m}^\ast E_m,
\]
and restricting $\vartheta(m)$ further to $Y_b \times \set{m}\simto \set{g} \times Y_b \times \set{m}$ we obtain 
\[
\vartheta_g(m) \colon E_m \,\simto\, \rho_g(m)^\ast E_m,
\]
where $\rho_g(m)$ is, as ever, the composition
\[
\rho_g(m)\colon Y_b \times \set{m} \simto \set{g} \times Y_b \times \set{m} \into G\times Y_b \times \set{m} \xrightarrow{\sigma_{Y,m}} Y_b \times \set{m}.
\]
Now, for $g$ and $h$ varying in $G$, we consider the diagrams
\[
\begin{tikzcd}
\rho_{g}(m)^\ast \rho_{h}(m)^\ast E_m \arrow[equal]{d} & & \rho_g(m)^\ast E_m \arrow[swap]{ll}{\rho_g(m)^\ast \vartheta_h(m)} \\
\rho_{hg}(m)^\ast E_m & & E_m \arrow[swap]{ll}{\vartheta_{hg}(m)}\arrow[swap]{u}{\vartheta_g(m)} 
\end{tikzcd}
\]
and we ask whether these commute. As before, we translate this condition as follows. We consider the function
\[
F_m\colon G\times G \to \Aut E_m = \BC^\ast, \quad (g,h) \mapsto  \vartheta_{hg}(m)^{-1} \circ \rho_g(m)^\ast \vartheta_h(m)\circ \vartheta_g(m) .
\]
Thanks to the fact that $E_m$ is simple, which implies $\Aut E_m = \BC^\ast$, the function $F_m$ can be seen as a regular nowhere vanishing function in $\OO(G\times G)^\times$. Thus by Fact \ref{rmk:Rosenlicht_trick} (applied to $Z=Z'=G$, which is irreducible since it is smooth and connected) we can write $F_m(g,h) = F_{1,m}(g)\cdot F_{2,m}(h)$ for $F_{i,m} \in \OO(G)^\times$. Since $F_m(g,1)=1=F_m(1,h)$ for all closed points $g$, $h\in G$ (we have used $\rho_e(m) = \id$ as well as the normalisation $\vartheta_e(m) = \id_{E_m}$), it follows that $F_m$ is constantly equal to $1 \in \BC^\ast$.
To finish the proof, it is enough to observe that by Diagram \eqref{diag:restriction_of_action} we have $\vartheta_g(m) =  i_m^\ast \vartheta_g$, in particular we have a commutative diagram
\[
\begin{tikzcd}
G\times G \arrow{r}{F} \arrow[swap]{dr}{1\,\equiv\, F_m} & \Aut E\arrow{d}{i_m^\ast} \\
& \Aut E_m
\end{tikzcd}
\]
and since $i_m^\ast$ is an isomorphism on $\BC^\times \subset \Aut E$ we conclude that $F \equiv 1$, as required.
\end{proof}

\subsection{The Atiyah class of the universal complex}
From now on, we 
endow $E \in \Perf X$ with the $G$-equivariant structure produced in \eqref{equiv_str_E}. We endow $M$ with the $G$-action $g\cdot m = \sigma_M(g^{-1},m)$, so that the projection $\pi_M\colon X \to M$ is $G$-equivariant.
We also assume that $M$ admits a $G$-equivariant embedding inside a smooth scheme.\footnote{Recall from Example \ref{rmk:G-equiv_emb} that this is easily achieved in many practical applications, where the moduli space is actually quasi-projective. However, this assumption can be removed, just as in \cite[\S\,4.5]{HT}. We leave the details to the reader.} By Lemma \ref{lemma:equivariant_embedding_implies_dagger} it follows that $M$ (and hence $X$, by \cite[Ex.~2.1.2\,(h)]{MR1106918}) satisfy $(\dagger)$. Finally, notice that since the  projection $\pi_M\colon X \to M$ is a proper (in fact, smooth and projective) morphism of $G$-schemes satisfying condition $(\dagger)$, equivariant Grothendieck duality applies to $\pi_M$. 

Our goal is to prove the following result, which is Theorem \ref{thm:main2} from the Introduction.

\begin{theorem}\label{main2_body}
Let $M\to B$ be as in \S\,\ref{sec:moduli_M}, and assume $G$ is reductive. Then the relative obstruction theory on $M\to B$ is naturally $G$-equivariant.
\end{theorem}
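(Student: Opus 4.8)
The plan is to carry out the four-step strategy announced after the statement of Theorem~\ref{thm:main2}, turning each step into a direct application of the machinery built above.

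First I record the setup. By Proposition~\ref{prop:At_E_is_equiv} the universal complex $E$ carries a $G$-equivariant structure on $X=Y\times_BM$, and $M$ is endowed with the action $g\cdot m=\sigma_M(g^{-1},m)$, so that $\pi_M\colon X\to M$ is a $G$-equivariant smooth projective morphism; by the standing hypothesis on $M$ together with Lemma~\ref{lemma:equivariant_embedding_implies_dagger}, both $X$ and $M$ satisfy $(\dagger)$. Since $B$ carries the trivial $G$-action, the relative truncated cotangent complex $\BL_{M/B}$ has its canonical lift to $\derived^{[-1,0]}(\QCoh_M^G)$ from \S\,\ref{sec:truncated_cotg}, and the truncated cotangent complex relevant to the construction, being the pullback of $\BL_{M/B}$ along the projection $X\to Y$ --- the base change of $M\to B$ along the smooth morphism $Y\to B$ --- is canonically identified with $\pi_M^\ast\BL_{M/B}$ in $\derived^{[-1,0]}(\QCoh_X^G)$. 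Working in the relative setting of \S\,\ref{sec:relative} with $Y$ in the role of the base, the relative version of Theorem~\ref{main_thm_BODY} established in \S\,\ref{relative_again} shows that
\[
\At_{E/Y}\,\in\,\Ext^1_X(E,E\otimes\pi_M^\ast\BL_{M/B})
\]
is a $G$-equivariant extension; as $G$ is reductive, by the remark following Lemma~\ref{lemma:G-inv_Ext_groups} this is equivalent to $\At_{E/Y}$ lying in the $G$-invariant subspace $\Ext^1_X(E,E\otimes\pi_M^\ast\BL_{M/B})^G$.

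Next I would run the Huybrechts--Thomas construction of the obstruction-theory morphism $\phi$ recalled in the introduction arrow by arrow, checking that every arrow is a morphism of $G$-representations, so that the $G$-invariance of the Atiyah class propagates to the end. The traceless summand $\RRlHom_X(E,E)_0$ lies in $\Perf^G(X)$: indeed $\RRlHom_X(E,E)=E^\vee\otimes^{\LL}E\in\Perf^G(X)$, its trace to $\OO_X$ is the equivariant evaluation map, and $r^{-1}\,\id_E$ --- which is $G$-invariant by Remark~\ref{identity_is_equivariant} --- provides a $G$-equivariant splitting of it, so the trace triangle $\RRlHom_X(E,E)_0\to\RRlHom_X(E,E)\to\OO_X$ is (split) in $\derived(\QCoh_X^G)$. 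The passage from $\Ext^1_X(E,E\otimes\pi_M^\ast\BL_{M/B})$ to $\Ext^1_X(\RRlHom_X(E,E)_0,\pi_M^\ast\BL_{M/B})$ uses only the $\otimes$--$\RRlHom$ adjunction, the $G$-equivariant self-duality of $\RRlHom_X(E,E)$, and precomposition with the inclusion $\RRlHom_X(E,E)_0\hookrightarrow\RRlHom_X(E,E)$ --- a map of $G$-representations by Corollary~\ref{lemma:equivariant_composition} --- hence preserves $G$-invariance. Rewriting $\pi_M^\ast(-)=\pi_M^!(-)\otimes\omega_{\pi_M}^\vee[-d]$ --- valid also for the $G$-equivariant $\pi_M^!$ by \eqref{eqn:for}, with $\omega_{\pi_M}$ $G$-equivariant because $\pi_M$ is a $G$-equivariant smooth morphism --- identifies the class with one in $\Ext^{1-d}_X(\RRlHom_X(E,E)_0\otimes\omega_{\pi_M},\pi_M^!\BL_{M/B})$, and equivariant Grothendieck duality (Corollary~\ref{cor:Equivariant_Grothendieck_Global_sections}, applied to the proper morphism $\pi_M$ with $\RRlHom_X(E,E)_0\otimes\omega_{\pi_M}\in\Perf^G(X)$ and $\BL_{M/B}\in\derived_{\qcoh}^+(\mathscr M)$) identifies it, compatibly with the $G$-representation structure, with a $G$-invariant class $\phi\in\Ext^{1-d}_M(\RR\pi_{M\ast}(\RRlHom_X(E,E)_0\otimes\omega_{\pi_M}),\BL_{M/B})$. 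This $\phi$ is precisely the relative obstruction theory of \cite[Thm.~4.1]{HT}.

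Finally I would use reductivity to upgrade invariance to equivariance. The source $\RR\pi_{M\ast}(\RRlHom_X(E,E)_0\otimes\omega_{\pi_M})[d-1]$ is an object of $\derived(\QCoh_M^G)$ --- the equivariant pushforward of an object of $\Perf^G(X)$ --- the target $\BL_{M/B}$ lies in $\derived^{[-1,0]}(\QCoh_M^G)$, and both have as underlying complexes the ones used in \cite{HT}. By Lemma~\ref{lemma:G-inv_Ext_groups} the $G$-invariant Hom-space in which $\phi$ lives coincides with $\Hom_{\derived(\QCoh_M^G)}(\RR\pi_{M\ast}(\RRlHom_X(E,E)_0\otimes\omega_{\pi_M})[d-1],\BL_{M/B})$, so $\phi$ lifts canonically to $\derived(\QCoh_M^G)$. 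Since $\phi$ is already known to be a relative obstruction theory in the sense of \cite[\S\,7]{BFinc}, this lift exhibits it as a $G$-equivariant relative obstruction theory in the sense of Definition~\ref{def:equiv_OB_THEORY}, which is the assertion; naturality is clear because every step is canonical.

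I expect the main obstacle to be bookkeeping rather than conceptual, and it is concentrated in the middle step: one must be sure that \emph{every} arrow in the Huybrechts--Thomas recipe for $\phi$ --- the adjunctions and functorialities, the biduality $E^{\vee\vee}\cong E$, the trace and its splitting, the twist relating $\pi_M^\ast$ and $\pi_M^!$, and the duality isomorphism --- is an honest morphism of $G$-representations, i.e.\ is compatible with the forgetful functor $\Phi$. For the derived functors $\LL f^\ast$, $\RR f_\ast$, $\otimes^{\LL}$ and $\RRlHom_X(-,-)$ this is automatic by \S\,\ref{sec:equiv_der_functors}, property \eqref{commute_with_Phi}, and for Grothendieck duality it is Corollary~\ref{cor:Equivariant_Grothendieck_Global_sections}; the trace, its splitting by $r^{-1}\id_E$, and the biduality isomorphism require the small separate checks sketched above. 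Granting these, the conclusion is formal, and reductivity of $G$ enters exactly once --- through Lemma~\ref{lemma:G-inv_Ext_groups} --- precisely where the \emph{invariance} of $\phi$ must be turned into the \emph{liftability} demanded by the definition of an equivariant obstruction theory.
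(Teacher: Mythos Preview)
Your proposal is correct and follows essentially the same route as the paper's own proof in \S\,\ref{sec:thmB_proof}: equivariance of $E$ and of $\At_{E/Y}$, equivariance of the trace splitting, equivariant Grothendieck duality along $\pi_M$, and Lemma~\ref{lemma:G-inv_Ext_groups} to convert $G$-invariance into a lift to $\derived(\QCoh_M^G)$. The only cosmetic difference is in the justification that $\tr$ is $G$-equivariant: you argue directly that the trace is the evaluation map $E^\vee\otimes^{\LL}E\to\OO_X$ (hence built from equivariant functors), whereas the paper deduces $G$-invariance of $\tr$ from Corollary~\ref{lemma:equivariant_composition} applied to $\id_E^\ast$; both are valid and lead to the same conclusion.
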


We recalled in the Introduction how the relative obstruction theory is obtained via the Atiyah class of the universal complex. We review this below, directly in the equivariant setting.

The complex $E$ has a well-defined truncated Atiyah class \eqref{def:truncated_atiyah_class}
\[
\At_E \,\in \,\Ext^1_X(E,E\otimes \BL_{X}),
\]
and our first task is now to prove its equivariance. The proof of Theorem \ref{main2_body} will follow almost immediately by equivariant Grothendieck duality.
In fact, the equivariance of the Atiyah class is now an easy corollary of Proposition \ref{prop:At_E_is_equiv} and the main result of the paper.

\begin{corollary}\label{cor:absolute_AT}
The Atiyah class $\At_E$ is naturally $G$-equivariant.
\end{corollary}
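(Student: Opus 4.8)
The plan is to obtain this as an immediate consequence of the main theorem, Theorem \ref{main_thm_BODY}, applied to the $G$-scheme $X = Y\times_BM$. Recall that Theorem \ref{main_thm_BODY} requires an affine algebraic group $G$, a separated noetherian $\BC$-scheme $X$ carrying a $G$-action and admitting a $G$-equivariant embedding in a smooth $G$-scheme, and a perfect complex on $X$ together with a chosen lift to $\derived(\QCoh_X^G)$; its conclusion is that such a lift makes $\At_E$ canonically $G$-equivariant, i.e.\ the morphism $E\to E\otimes\BL_X[1]$ it represents lifts to $\derived(\QCoh_X^G)$. So the work is entirely in matching the present setup to these hypotheses, and Proposition \ref{prop:At_E_is_equiv} already supplies most of them.

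Concretely, I would argue as follows. The group $G$ is affine algebraic by the standing assumptions of \S\,\ref{sec:perfect_complexes}, and $X = Y\times_BM$ is a separated noetherian $\BC$-scheme, since $Y\to B$ is projective with $B$ noetherian separated and $M$ is a scheme (it is assumed to embed in a smooth scheme). By Proposition \ref{prop:At_E_is_equiv}, $X$ carries the $G$-action $\tau$ and $E$ carries the $G$-equivariant structure $\vartheta$ of \eqref{equiv_str_E}, which is precisely the required lift of $E$ to $\derived(\QCoh_X^G)$. It then remains to exhibit a $G$-equivariant embedding of $(X,\tau)$ into a smooth $G$-scheme: starting from a $G$-equivariant immersion $M\into S$ into a smooth $\BC$-scheme $S$ (the standing assumption on $M$) and from a $G$-equivariant closed immersion $Y\into \BP^N\times B$ over $B$ — which exists by a relative version of Sumihiro's linearisation theorem applied to a relatively very ample line bundle on $Y\to B$, $Y$ being smooth over $B$ — one forms
\[
X = Y\times_BM \,\into\, (\BP^N\times B)\times_BM = \BP^N\times M \,\into\, \BP^N\times S ,
\]
and $\BP^N\times S$ is a smooth $G$-scheme for the diagonal action. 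In the applications $B$ is a point and this step is classical; in general the embedding hypothesis can in any case be removed, arguing as in \cite[\S\,4.5]{HT}.

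With the hypotheses in place, Theorem \ref{main_thm_BODY} applies verbatim to $(X,\tau)$ and the lift $(E,\vartheta)$, and yields that $\At_E\in\Ext^1_X(E,E\otimes\BL_X)$ is canonically $G$-equivariant, which is the assertion of the corollary (here ``naturally'' matches ``canonically'': the lift of $E\to E\otimes\BL_X[1]$ is determined once the lift $\vartheta$ of $E$ is fixed). I expect the only point that is not purely formal to be the production of the $G$-equivariant smooth embedding of $X$ — essentially a relative linearisation statement for $Y\to B$ — and even that is either classical (when $B$ is a point) or dispensable by the reduction of \cite[\S\,4.5]{HT}; everything else is a direct translation of Proposition \ref{prop:At_E_is_equiv} and Theorem \ref{main_thm_BODY}.
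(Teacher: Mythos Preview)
Your proposal is correct and follows essentially the same approach as the paper: the paper's proof reads simply ``Follows by combining Proposition \ref{prop:At_E_is_equiv} with Theorem \ref{main_thm_BODY}.'' You are more careful than the paper in explicitly verifying the hypothesis that $X=Y\times_BM$ admits a $G$-equivariant embedding into a smooth $G$-scheme (the paper treats this as part of the standing assumptions, having only stated it for $M$ and then recorded that $X$ satisfies $(\dagger)$), but the logical structure is identical.
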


\begin{proof}
Follows by combining Proposition \ref{prop:At_E_is_equiv} with Theorem \ref{main_thm_BODY}.
\end{proof}

The $G$-equivariant maps $\pi_Y\colon X \to Y$ and $Y \to \Spec \BC$ induce a morphism $j\colon\BL_X \to \BL_{X/Y}$ in $\derived(\QCoh_X^G)$ by Lemma \ref{lemma:equivariant_transitivity_triangle}.
Composing $\At_E$ with $\id_E \otimes j[1]$ gives the relative Atiyah class
\[
\At_{E/Y} \colon E \to E\otimes \BL_{X}[1] \to E\otimes \BL_{X/Y}[1].
\]
Therefore Corollary \ref{cor:absolute_AT} immediately implies the following.
 

\begin{corollary}\label{cor:truncated_AT_is_equiv}
The relative Atiyah class $\At_{E/Y}$ is naturally $G$-equivariant.
\end{corollary}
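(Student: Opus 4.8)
The plan is simply to observe that $\At_{E/Y}$ is, by its very construction, the composite of two morphisms in $\derived(\QCoh_X)$ each of which has already been shown to lift to $\derived(\QCoh_X^G)$, and that, $\Phi$ being a functor, the composite of two lifts is a lift of the composite. Concretely, $\At_{E/Y}$ is the arrow
\[
E \xrightarrow{\At_E} E\otimes\BL_X[1] \xrightarrow{\id_E\otimes j[1]} E\otimes\BL_{X/Y}[1],
\]
in which $\At_E$ is $G$-equivariant by Corollary \ref{cor:absolute_AT} and $j\colon\BL_X\to\BL_{X/Y}$ is $G$-equivariant by Lemma \ref{lemma:equivariant_transitivity_triangle}; so there is nothing left to do but assemble these two facts.

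First I would record that $j$ lifts to $\derived(\QCoh_X^G)$. The structure morphism of $X$ factors as $X\xrightarrow{\pi_Y}Y\to\Spec\BC$, and both arrows are $G$-equivariant: $Y$ is a $G$-scheme because $Y\to B$ is $G$-invariant with $B$ carrying the trivial action, while $\pi_Y$ is $G$-equivariant for the action $\tau$ on $X = Y\times_BM$ of Proposition \ref{prop:At_E_is_equiv}, whose first component is $\sigma_Y$. Applying Lemma \ref{lemma:equivariant_transitivity_triangle} to this factorisation then yields the sequence $\tau_{\geq-1}\LL\pi_Y^\ast L^\bullet_{Y/\BC}\to\BL_X\to\BL_{X/Y}$ in $\derived^{[-1,0]}(\QCoh_X^G)$, and $j$ is (the image under $\Phi$ of) its second arrow, compatibility with the map of Diagram \eqref{dia99} being part of that construction. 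Next, since $E$ carries the equivariant structure \eqref{equiv_str_E}, tensoring by $E$ and shifting are equivariant operations (cf.~\S\,\ref{sec:underived_functors}, together with exactness of the shift on $\derived(\QCoh_X^G)$), so $\id_E\otimes j[1]$ is itself a morphism in $\derived(\QCoh_X^G)$.

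Finally, writing $\widetilde{\At}_E$ for the lift of $\At_E$, the composite $(\id_E\otimes j[1])\circ\widetilde{\At}_E$ is a morphism in $\derived(\QCoh_X^G)$ whose image under $\Phi$ is $\At_{E/Y}$, since $\Phi$ is a functor commuting with $\otimes^{\LL}$ and with shifts; this exhibits $\At_{E/Y}$ as $G$-equivariant. Equivalently — and this is the phrasing I would probably adopt — Corollary \ref{lemma:equivariant_composition}\,(1) gives that $(\id_E\otimes j[1])_\ast\colon\Hom_X(E,E\otimes\BL_X[1])\to\Hom_X(E,E\otimes\BL_{X/Y}[1])$ is a morphism of $G$-representations, hence sends the $G$-equivariant class $\At_E$ to a $G$-equivariant class, namely $\At_{E/Y}$; just as in the relative discussion of \S\,\ref{relative_again}, this image is in fact $G$-invariant. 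There is no genuine obstacle here: all the mathematical content sits in Corollary \ref{cor:absolute_AT} (and, behind it, Theorem \ref{main_thm_BODY}), and the only point requiring a little care is to confirm that the hypotheses of Lemma \ref{lemma:equivariant_transitivity_triangle} are actually in force — $G$-equivariance of $\pi_Y$ and of $Y\to\Spec\BC$, and condition $(\dagger)$ for the schemes involved, both arranged in \S\,\ref{sec:moduli_M} and thereafter.
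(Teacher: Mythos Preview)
Your proposal is correct and follows essentially the same approach as the paper: the paper's argument is simply that $j\colon\BL_X\to\BL_{X/Y}$ is $G$-equivariant by Lemma \ref{lemma:equivariant_transitivity_triangle} (applied to $\pi_Y\colon X\to Y$ and $Y\to\Spec\BC$), so composing the equivariant $\At_E$ of Corollary \ref{cor:absolute_AT} with $\id_E\otimes j[1]$ yields the result. Your write-up is more detailed in verifying the hypotheses of Lemma \ref{lemma:equivariant_transitivity_triangle} and in spelling out why the composite lifts, but the logical structure is identical; note only that condition $(\dagger)$ is not actually among the hypotheses of that lemma (it lives in \S\,\ref{sec:Equivariance_of_At}, where merely noetherian separated is assumed), so you need not invoke it there.
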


By taking the image of $\id_E \in \Hom_X(E,E)^G$ under the composition $(\id_E\otimes j[1])_\ast \circ \At_{E\ast}$ we can view the relative truncated Atiyah class as an element
\[
\At_{E/Y} \,\in \,\Ext_X^1(E,E\otimes \BL_{X/Y})^G = \Ext_X^1(E,E\otimes \pi_M^\ast \BL_{M/B})^G,
\]
where we have observed 
that $\BL_{X/Y} = \pi_M^\ast \BL_{M/B}$ by \cite[\href{https://stacks.math.columbia.edu/tag/09DJ}{Tag 09DJ}]{stacks-project}.

\subsection{Proof of Theorem \ref{thm:main2}}\label{sec:thmB_proof}

From now on, we assume $G$ to be \emph{reductive} (cf.~\S\,\ref{subsec:reductives}). 
We shall exploit the splitting
\begin{equation}\label{splittings}
\RRlHom_X(E,E) = \OO_X \oplus \RRlHom_X(E,E)_0, 
\end{equation}
which we wish to prove to be $G$-equivariant.
Recall (see e.g.~\cite[\S\,10.1]{modulisheaves} for more details on this construction) how \eqref{splittings} is obtained in the non-equivariant setup: 
the trace map $\tr\colon \RRlHom_X(E,E) \to \OO_X$ splits the identity homomorphism $\id_E\colon \OO_X \to \RRlHom_X(E,E)$, and the composition $\tr\circ\id_E$ is multiplication by the rank $r$ (which we assumed nonzero in \S\,\ref{sec:moduli_M}). We now show that the induced distinguished triangle
\begin{equation}\label{triangle_traceless}
\RRlHom_X(E,E)_0 \to \RRlHom_X(E,E) \xrightarrow{\tr} \OO_X,
\end{equation}
defining the \emph{traceless} $\RRlHom$, is naturally lifted to $\derived(\QCoh_X^G)$.

Consider the element 
\begin{align*}
\id_E\,\in\,\Hom_X(E,E) = \Hom_X(\OO_X,\RRlHom_X(E,E)).
\end{align*}
By Remark \ref{identity_is_equivariant} and Lemma \ref{lemma:G-inv_Ext_groups}, we know that 
\[
\id_E\,\in\,\Hom_X(\OO_X,\RRlHom_X(E,E))^G \,\cong\,\Hom_{\derived(\QCoh_X^G)}(\OO_X,\RRlHom_X(E,E)).
\]
Then take $\mathscr F^\bullet = \OO_X$ and $i = \id_{E}$ in Corollary \ref{lemma:equivariant_composition}\,\eqref{item84762874} to observe that 
\[
\id_{E}^\ast\colon \Hom_X(\RRlHom_X(E,E),\OO_X) \to \Hom_X(\OO_X,\OO_X)
\]
is $G$-equivariant. In particular, it preserves the $G$-invariant parts. Since the trace map $\tr\in \Hom_X(\RRlHom_X(E,E),\OO_X)$ gets sent to $r\cdot \id_{\OO_X}$, which is $G$-invariant in virtue of Remark \ref{identity_is_equivariant}, it follows that $\tr$ must be $G$-invariant, too.
In other words,
\[
\tr\,\in\, \Hom_X(\RRlHom_X(E,E),\OO_X)^G \,\cong\,\Hom_{\derived(\QCoh_X^G)}(\RRlHom_X(E,E),\OO_X).
\]
We can then take the shifted cone of $\tr$ in $\derived(\QCoh_X^G)$ to obtain a distinguished triangle
\[
\RRlHom_X(E,E)_0 \to \RRlHom_X(E,E) \xrightarrow{\tr} \OO_X
\]
in $\Perf^G(X)\subset \derived(\QCoh_X^G)$, lifting \eqref{triangle_traceless}.

We have proved that the splitting \eqref{splittings} is $G$-equivariant. This in particular implies that the projection
\[
\mathsf q \colon \Ext^1_X(E,E\otimes \pi_M^\ast \BL_{M/B}) \to \Ext^1_X(\RRlHom_X(E,E)_0, \pi_M^\ast \BL_{M/B})
\]
from the full Ext group containing the element $\At_{E/Y}$, is a morphism of $G$-representations, in particular it 
preserves $G$-invariant parts. Therefore, $\At_{E/Y}$ maps to an element
\[
\mathsf q(\At_{E/Y})\,\in\, \Ext^1_X(\RRlHom_X(E,E)_0, \pi_M^\ast \BL_{M/B})^G.
\]
Note that the dualising line bundle $\omega_{\pi_M} = \pi_Y^\ast \omega_{Y/B}$ is naturally $G$-equivariant. By equivariant Grothendieck duality along the proper morphism $\pi_M$ 
(Corollary \ref{cor:Equivariant_Grothendieck_Global_sections}), 
the latter group is canonically isomorphic to
\begin{align*}
\Ext_X^1(\RRlHom_X(E,&E)_0\otimes \omega_{\pi_M}[d],\pi_M^\ast \BL_{M/B} \otimes \omega_{\pi_M} [d])^G 
\\
&=\,\,\Ext_X^{1-d}(\RRlHom_X(E,E)_0\otimes \omega_{\pi_M},\pi_M^!\BL_{M/B})^G 
\\
&\cong\,\, \Ext_M^{1-d}(\RR \pi_{M\ast}(\RRlHom_X(E,E)_0 \otimes \omega_{\pi_M}),\BL_{M/B})^G \\
&=\,\, \Hom_M(\mathbb E,\BL_{M/B})^G\\
&\cong\,\, \Hom_{\derived(\QCoh_M^G)}(\mathbb E,\BL_{M/B})
\end{align*}
where $d$ is the relative dimension of $Y\to B$ and we have set
\[
\mathbb E = \RR \pi_{M\ast}(\RRlHom_X(E,E)_0 \otimes \omega_{\pi_M})[d-1].
\]
We have also used again that $G$ is reductive for the last isomorphism. The morphism $\phi \in \Hom_M(\mathbb E,\BL_{M/B})$ determined as the image of the relative truncated Atiyah class $\At_{E/Y}$ is a relative obstruction theory on $M\to B$ by \cite[Thm.~4.1]{HT}. Therefore we have shown its equivariance in the sense of Definition \ref{def:equiv_OB_THEORY}.

The proof of Theorem \ref{thm:main2} is complete.

\begin{example}
Let $Y$ be a smooth complex projective toric $3$-fold. Let $G = \BG_m^3\subset Y$ be the open torus. The above result confirms the $G$-equivariance of the (perfect) obstruction theory on the following classical moduli spaces:
\begin{enumerate}
\item the Hilbert scheme of points $\Hilb^nY$,
\item the moduli space $I_m(Y,\beta)$ of ideal sheaves $\mathscr I$ with $\ch\, \mathscr I = (1,0,-\beta,-m)$,\label{item222}
\item the moduli space $P_m(Y,\beta)$ of stable pairs $(\mathcal F,s)$ with $\chi(\mathcal F)=m$ and $[\mathcal F] = \beta$,\label{item333}
\item higher rank analogues of \eqref{item222} and \eqref{item333},
\item the Quot scheme $\Quot_Y(F,n)$ of length $n$ quotients of a $G$-equivariant exceptional locally free sheaf $F$, as in \cite{Quot19}. This will be exploited in \cite{FMR}.
\end{enumerate}
\end{example}

\bibliographystyle{amsplain}
\bibliography{bib}

\end{document}